\documentclass[11pt]{article}
\usepackage{graphicx}
\usepackage[top=2cm,left=2cm,right=2cm,bottom=2cm]{geometry}
\usepackage[utf8]{inputenc}
\usepackage[english]{babel}  
\usepackage{amsfonts}
\usepackage{amsmath}
\usepackage{amssymb}
\usepackage{amsthm}
\usepackage{epstopdf}
\usepackage{bbold}
\usepackage{ wasysym }
\usepackage{array}
\usepackage{ textcomp }
\usepackage{hyperref}
\usepackage{hhline}
\usepackage{stmaryrd}
\usepackage{xcolor}


\newcommand{\Pro}{\mathbb{P}}
\newcommand{\R}{\mathbb{R}}

\newcommand{\beq}{\begin{equation}}
\newcommand{\eeq}{\end{equation}}
\newcommand{\bepa}{\left\{ \begin{array}{l}}
\newcommand{\eepa}{\end{array} \right.}
\newcommand{\p}{\partial}
\newcommand{\f}{\frac}
\newtheorem{theorem}{Theorem}
\newtheorem{lemma}{Lemma}
\newtheorem{definition}{Definition}
\newtheorem{remark}{Remark}
\newtheorem{proposition}{Proposition}
\newtheorem{corollary}{Corollary}

\begin{document}

\title{Quantifying the Survival Uncertainty of \textit{Wolbachia}-infected Mosquitoes in a Spatial Model
}
\author{Martin Strugarek\thanks{AgroParisTech, 16 rue Claude Bernard, F-75231 Paris Cedex 05 
              \& LJLL, UPMC, 5 place Jussieu, 75005 Paris France \href{mailto:strugarek@ljll.math.upmc.fr}{strugarek@ljll.math.upmc.fr}}
\and
        Nicolas Vauchelet\thanks{LAGA - UMR 7539
 Institut Galil\'{e}e
 Universit\'{e} Paris 13
 99, avenue Jean-Baptiste Cl\'{e}ment
 93430 Villetaneuse - France}
\and
        Jorge P. Zubelli\thanks{IMPA, Estrada Dona Castorina, 110
 	      Jardim Botânico
 	      22460-320
 	      Rio de Janeiro, RJ - Brazil}
}

\maketitle

\begin{abstract}
Artificial releases of \textit{Wolbachia}-infected \textit{Aedes} mosquitoes have been under study in the past years
for fighting vector-borne diseases such as dengue, chikungunya and zika.
Several strains of this bacterium cause cytoplasmic incompatibility (CI) and can also affect their host's
 fecundity or lifespan, while highly reducing vector competence for the main arboviruses.

We consider and answer the following questions: 1) what should be the initial condition ({\it i.e.} size of the initial mosquito population) to have invasion with one mosquito release source? We note that it is hard to have an invasion in such case. 2) How many release points does one need to have sufficiently high probability of invasion? 3) What happens if one accounts for uncertainty in the release protocol ({\it e.g.} unequal spacing among release points)?

We build a framework based on existing reaction-diffusion models for the uncertainty quantification in this context,
obtain both theoretical and numerical lower bounds for the probability of release success
and give new quantitative results on the one dimensional case.
\end{abstract}

\section{Introduction}

In recent years, the spread of  chikungunya, dengue, and zika has become a
major public health issue, especially in tropical areas of the planet
\cite{CDC,Bha.Burden}.  All those diseases are caused by arboviruses whose main transmission vector is the 
\textit{Aedes aegypti}. 
One of the most important and innovative ways of vector control 
 is the artificial introduction of a maternally transmitted
bacterium of genus \textit{Wolbachia} in the mosquito population (see \cite{Bla.Wolbachia,Joh.Impact,Wal.wMel}). 
This process has been successfully implemented on the field (see \cite{Hof.Successful}).
It requires the release of \textit{Wolbachia}-infected mosquitoes on the field and ultimately depends on the
prevalence of one sub-population over the other.
Other human interventions on mosquito populations may require such spatial release protocols 
(see \cite{Alp.Genetic,Alp.Aedes} for a review of past and current field trials for genetic mosquito population modification).
Designing and optimizing these protocols remains a challenging problem for today 
(see \cite{Han.Strategies,Vav.Making}), and may be enriched by the lessons learned from previous release 
experiments (see \cite{Hof.stability,Ngu.Field,Yea.Mitochondrial})

This article studies a spatially distributed model for
the spread of \textit{Wolbachia}-infected mosquitoes in a population and its
success as far as non-extinction probabilities are concerned. 
We address the question of the release protocol to guarantee a high probability of invasion. More precisely, what quantity of mosquitoes need to be released to ensure invasion, if we have only one release point? What if we have multiple release points and if there is some uncertainty in the release protocol?
We obtain lower bounds so as to quantify the  success probability  of spatial  spread of the introduced population according to a mathematical model.  

We define here an \textit{ad hoc} framework for the computation of this success probability.
As a totally new feature added to the previous works on this topic 
(see \cite{Cra.Wolbachia,Han.Modelling,Han.Population,Jan.Stochastic,Tur.Cytoplasmic,Yea.Dynamics}), it involves space variable as a key ingredient.
In this paper we provide quantitative estimate and numerical results in dimension $1$.

It is well accepted that stochasticity plays a significant role in biological modeling. 
Probabilities of introduction success have already been investigated for genes or other agents into a wild biological population. 
The recent work \cite{BarTur.Spatial} makes use of reaction-diffusion PDEs to describe the biological phenomena underlying sucessful introduction as cytoplasmic analogues of the Allee effect.
The infection of the mosquito population by {\em Wolbachia} is seen as an ``alternative trait'', spreading across a population having initially a homogeneous regular trait.
Other recent models have been proposed either to compute the invasion speed 
(\cite{chankim}), 
or get an insight into the induced time dynamics of more complex systems, including humans or pathogens 
(see \cite{Fen.Solving,HugBri.Modelling}).
In the mosquito part, models usually feature two stable steady states: invasion (the regular trait disappears) and extinction (the alternative trait disappears). 
Since this phenomenon is currently being investigated as a tool to fight \textit{Aedes} transmitted diseases, the problem of determination of thresholds for invasion in this equation is of tremendous importance. 

The issue of survival probability of invading species has attracted a lot of attention by many researchers. Among such we may cite \cite{BarRou} and \cite{RouBar}. We stress, however, that this is not the direction followed in this paper. In the cited articles indeed, the basic underlying model is either a stochastic PDE or its discretization, and the uncertainty concerning the initial state is not considered.

In other words, although in a deterministic model as ours one can in principle numerically check for a specific initial configuration whether the invasion by the {\it Wolbachia}-infected mosquitoes will be successful or not, in practice such a specific initial condition is subject to uncertainty, and therefore the uncertainty quantification of the success probability is a natural question.

Our modeling goes as follows: We consider a domain $\Omega$, 
a frequency $p : \Omega \to [0, 1]$ that models the prevalence of the \textit{Wolbachia} infection trait. 
More specifically, in the case of cytoplasmic incompatibility caused in \textit{Aedes} mosquitoes by the endo-symbiotic bacterium \textit{Wolbachia}, $p$ is the proportion of mosquitoes infected by the bacterium ({\it e.g.} $p=1$ means that the whole population is infected).
Then, this frequency obeys a bistable reaction-diffusion equation.
We aim at estimating the invasion success probability with respect to the initial data (= release profile).

In \cite{BarTur.Spatial,sv2016} it was obtained
an expression for the reaction term $f$ in the limit Allen-Cahn equation 
\begin{equation}
\label{AllenCahn}
  \partial_t p - \sigma \Delta p = f(p) 
\end{equation}
in terms of the following biological parameters:
$\sigma$ diffusivity (in square-meters per day, for example), $s_f$ (effect of \textit{Wolbachia} on fecundity, $=0$ if it has no effect); $s_h$ (strength of the cytoplasmic incompatibility, $=1$ if it is perfect);
$\delta$ (effect on death rate, $d_i = \delta d_s$ where $d_s$ is the regular death rate without \textit{Wolbachia}) and $\mu$ (imperfection of vertical transmission, expected to be small).
It reads as follows: 
\begin{equation}
  f(p) = \delta d_s p \frac{-s_h p^2 + \big( 1+s_h - (1-s_f)(\frac{1 - \mu}{\delta} + \mu) \big) p + (1-s_f) \frac{1-\mu}{\delta} - 1}{s_h p^2 - (s_f + s_h)p + 1}.
\end{equation}
Bistable reaction terms are such that $f < 0$ on $(0, \theta)$ and $f > 0$ on $(\theta, \theta_+)$. Usually, we consider $\theta_+ = 1$. This is the case if $\mu = 0$.

The outline of the paper is the following. 
In the next section, we state and prove our main result: the existence
of compactly radially symmetric functions such that if the initial data
is above one of such function, then invasion occurs.
Then we explain how this result provides an estimate of the probability of
success of a release protocol.
Section \ref{sec:1D} and the following is devoted to the one dimensional case.
In Section~\ref{sec:4} we provide an analytical computation of the probability of success.
Numerical results are displayed in Section \ref{sec:num}. 
We conclude in Secion~\ref{sec:conclusion}.
Finally an appendix is devoted to the study of the minimization of the 
perimeter of release in one dimension.

\section{Setting the problem: How to use a threshold property to design a release protocol?}

\subsection{The threshold phenomenon for bistable equations}

In Equation~\eqref{AllenCahn}, we assume that 
\beq
\bepa
\exists \, \theta\in (0,1),\, f(0)=f(\theta)=f(1)=0, 
\\[10pt] f<0 \mbox{ on } (0,\theta),
\quad f>0 \mbox{ on } (\theta,1), \quad \int_0^1 f(x) dx > 0.
\label{hyp:bistab}
\eepa
\eeq
A consequence of this hypothesis is the existence of invading traveling waves.
From now on, we denote $F$ the anti-derivative of $f$ which vanishes at $0$,
\begin{equation}
F(x):=\int_0^x f(y)\,dy.
\label{def:F}
\end{equation}
Since we have assumed $F(1)>0$, by the bistability of the function $f$, there exists a unique 
$\theta_c\in(0,1)$ such that 
$$
F(\theta_c)=\int_0^{\theta_c} f(x) dx = 0.
$$

By making use of biologically reasonable parameters ($d_s = 0.27$, $s_f= 0.1$, $\mu=0$, 
$s_h=0.8$ and $\delta=0.3/0.27 = 10/9$), we obtain the profiles for $f$ and its anti-derivative in Figure \ref{fig:reacprofiles}. 
In \cite{HugBri.Modelling}, the authors used the notations $\phi = 1 - s_f$, $\delta = d_i / d_s$, $u = 1 - s_h$ and $v= 1 - \mu$.
They gave a range of values of these parameters for three \textit{Wolbachia} strains, 
namely \textit{wAlbB}, which has no impact on death ($\delta = 1$) but reduces fecundity, 
\textit{wMelPop} which highly increases death rate but isn't detrimental to fecundity, 
and \textit{wMel} which has a moderate impact on both.
Values are given in Table 3 of the cited article (which contains also a parameter $r$, 
standing for differential vector competence of \textit{Wolbachia}-infected mosquitoes for dengue, 
a feature we do not include in our modelling since we focus on the mosquito population dynamics), 
see the references therein for more details.
According to the aforementioned references, the authors always assumed perfect CI
and maternal transmission, that is, with our notations $s_h=1$ and $\mu=0$.
Our notations mimic those of \cite{BarTur.Spatial,Fen.Solving}, 
where they did not give as detailed tables for the parameters as in \cite{HugBri.Modelling}, although we refer the reader 
to the references they gave, which contain some quantitative estimations of these parameters.
Our choices for $d_s$ and $s_f$ reflect the field data exposed in~\cite{Dut.Lab}, for the (life-shortening) \textit{wMel} strain in the context of the city of Rio de Janeiro, in Brazil.
\begin{figure}[h]
 \includegraphics[width=\textwidth]{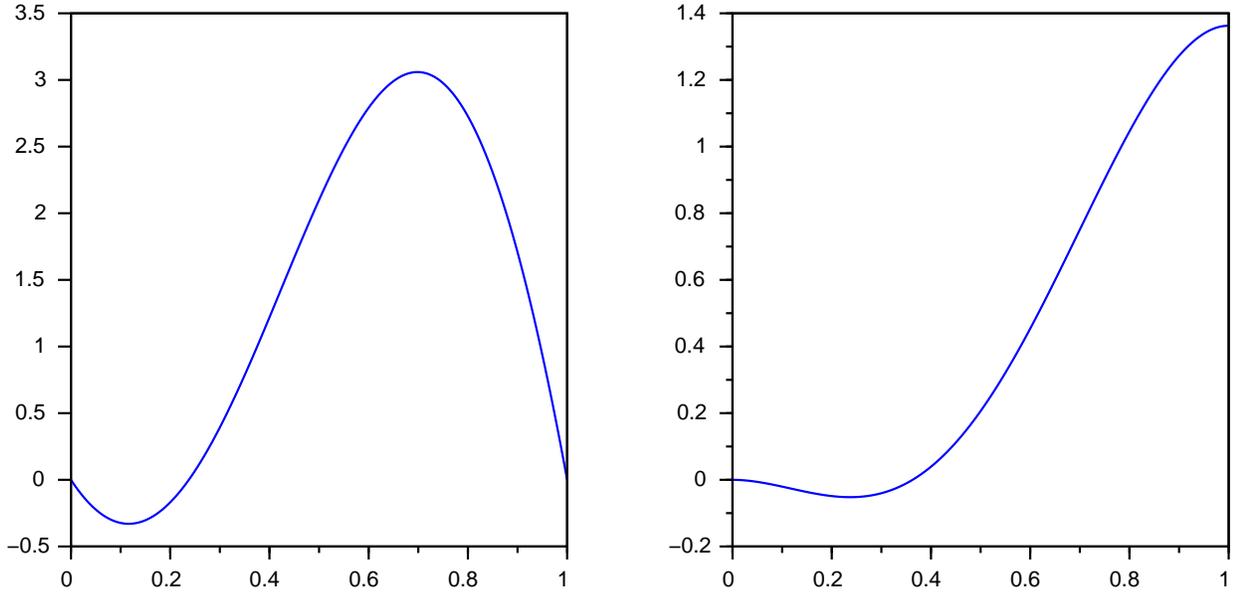}
 \caption{Profile of $f$ (left) and of its anti-derivative $F$ (right).}
 \label{fig:reacprofiles}
\end{figure}

We will always assume $\mu=0$ (perfect vertical transmission) in the following.
Note that our results also apply when $\mu > 0$, but in this case the ``invasion'' state is not exactly $p=1$, but $p=p_+ (\mu) < 1$, because there is a flaw in \textit{Wolbachia} vertical (=maternal) transmission.

Moreover, following estimates from \cite{Dut.Lab,Vil.Bayesian} for \textit{Aedes aegypti} in Rio de Janeiro (Brazil), and general literature review and discussion in Section 3 of \cite{Otero} we consider that mosquitoes spread at around $\sigma = 830 \mathrm{m}^2 / \text{day}$ (see the references given in \cite{Otero} for more details).
With these estimations of the parameters, the quantitative results we get are satisfactory because they appear to be relevant for practical purposes.
For example, in order to get a significant probability of success, the release perimeter we find is around $595 \mathrm{m}$ wide (in one dimension).
In the example from Figure~\ref{fig:reacprofiles}, $\theta_c \simeq 0.36$.

We say that a radially symmetric function $\phi$ on $\R^d$ is non-increasing 
if $\phi(x)=g(|x|)$ for some $g$ that is non-increasing on $\R^+$.

The following result gives a criterion on the initial data to guarantee invasion.
\begin{theorem}
Let us assume that $f$ is bistable in the sense of \eqref{hyp:bistab}.
Then, for all $\alpha \in (\theta_c, 1]$ there exists a compactly supported, radially symmetric non-increasing function 
 $v_\alpha(|x|)$, with $v_\alpha:\R_+ \to \R_+$ non-increasing, $v_\alpha (0) = \alpha$ (called ``$\alpha$-bubble''),
such that if $p$ is a solution of 
\begin{align}
& \p_t p - \sigma \Delta p = f(p), \label{eq:p} \\
& p(t=0,x) = p^0(x) \geq v_{\alpha}(|x|), \nonumber
\end{align}
then $p \xrightarrow[t \to \infty]{} 1$ locally uniformly.
Moreover, we can take Supp$(v_\alpha)=B_{R_\alpha}$ with
\begin{equation}\label{def:Ralpha}
R_{\alpha} = \sqrt{\sigma} \inf_{\rho \in \Gamma} \sqrt{ \f{1-\rho^d}{(1-\rho)^2} \f{1}{\big( \int_0^{\alpha} \big( 1 - \f{1-\rho}{\alpha} x \big)^d f(x) dx \big)_+} },
\end{equation}
where $\Gamma=\{\rho\in(0,1),\ \int_0^\alpha (1-\frac{1-\rho}{\alpha}x)^d f(x)dx >0\}$.

In one dimension, we have the sharper estimate Supp$(v_\alpha)=[-L_\alpha,L_\alpha]$ with
\begin{equation}\label{def:lalpha}
 L_{\alpha} = \sqrt{\f{\sigma}{2}} \int_0^{\alpha} \f{dv}{\sqrt{F(\alpha) - F(v)}}.
\end{equation}
\label{thm:invasion}
\end{theorem}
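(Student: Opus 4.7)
My approach is to construct a compactly supported stationary sub-solution $v_\alpha$ and invoke the parabolic comparison principle. Concretely, if for each $\alpha \in (\theta_c, 1]$ I can exhibit a radially symmetric non-increasing function $v_\alpha$ supported in $B_{R_\alpha}$, with $v_\alpha(0)=\alpha$ and satisfying $-\sigma \Delta v_\alpha \leq f(v_\alpha)$ in the distributional sense, then the solution $u$ of \eqref{eq:p} with initial datum $v_\alpha$ is non-decreasing in $t$ (by the sub-solution property) and, being bounded above by $1$, converges monotonically to a stationary limit $u_\infty \geq v_\alpha$. Comparison will then yield $p(t,\cdot) \geq u(t,\cdot)$ whenever $p^0 \geq v_\alpha$, and it will remain to identify $u_\infty \equiv 1$ to obtain local uniform convergence.

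For the one-dimensional case, I take $v_\alpha(x) = g(|x|)$, where $g$ is the solution of the autonomous ODE $\sigma g'' + f(g) = 0$ with initial data $g(0) = \alpha$, $g'(0) = 0$, extended by zero past its first zero. Phase-plane analysis gives the first integral
\[
\frac{\sigma}{2}\bigl(g'(r)\bigr)^{2} + F\bigl(g(r)\bigr) = F(\alpha),
\]
and since $\alpha > \theta_c$ one has $F(\alpha) > F(v)$ for every $v \in [0, \alpha)$ (because $F \leq 0$ on $[0, \theta_c]$ and $F$ is strictly increasing on $[\theta, 1]$), so $g$ is strictly decreasing and reaches $0$ at the finite value $L_\alpha$ obtained by separation of variables, exactly \eqref{def:lalpha}. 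The extension by zero is a bona fide weak sub-solution, because the jump of $g'$ from a negative value to $0$ at $\pm L_\alpha$ produces a positive Dirac in $g''$, contributing a non-positive term to $-\sigma g'' - f(v_\alpha)$.

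In dimension $d \geq 2$, this 1D profile is no longer a sub-solution, since the radial Laplacian carries the additional unfavorable term $\sigma(d-1)g'/r$ with $g' < 0$. I would therefore replace it by a two-piece radial ansatz $v_\alpha^{\rho,R}$ parametrized by $\rho \in (0, 1)$: constant equal to $\alpha$ on $B_{\rho R}$ and affine from $\alpha$ down to $0$ on the annulus $\rho R \leq |x| \leq R$. Direct evaluation of the Dirichlet energy and of $\int F(v_\alpha^{\rho,R})\,dx$, the latter followed by integration by parts and the change of variable $x = \alpha(R-r)/((1-\rho)R)$ on the ramp, produces exactly the weight $(1 - \tfrac{1-\rho}{\alpha} x)^d$ and the prefactor $(1-\rho^{d})/(1-\rho)^2$ appearing in \eqref{def:Ralpha}. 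Requiring the resulting energy to be negative (a sufficient condition for the long-time limit of the evolution to be $1$) and optimizing over $\rho \in \Gamma$ then delivers the bound $R \geq R_\alpha$.

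The main technical obstacle I expect is the identification $u_\infty \equiv 1$. In one dimension this is a direct phase-plane exercise: any bounded radial non-increasing stationary limit $u_\infty$ satisfies $\tfrac{\sigma}{2}(u_\infty')^2 + F(u_\infty) = F(u_\infty(0)) \geq F(\alpha) > 0$, which forbids the horizontal asymptote at infinity to be any zero of $f$ other than $1$, so monotonicity forces $u_\infty \equiv 1$. In higher dimensions no such phase plane is available, and one must invoke a Lyapunov-energy decay along the parabolic flow together with a classification of stable stationary solutions of the bistable equation on $\R^d$ dominating $v_\alpha$; this is precisely the reason why the higher-dimensional $R_\alpha$ in \eqref{def:Ralpha} is not sharp, whereas the 1D estimate $L_\alpha$ in \eqref{def:lalpha} is.
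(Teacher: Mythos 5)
Your one-dimensional argument is correct but follows a genuinely different route from the paper for the convergence step. Both you and the paper build the same profile (the orbit of $\sigma u''+f(u)=0$, $u(0)=\alpha$, $u'(0)=0$, cut off at its first zero, which gives exactly \eqref{def:lalpha}), and the paper likewise observes that its positive part is a sub-solution. But where you then let the flow increase monotonically in time to a stationary limit and classify that limit via the first integral $\frac{\sigma}{2}(u_\infty')^2+F(u_\infty)=F(u_\infty(0))\geq F(\alpha)>0$, the paper instead invokes the sharp-threshold trichotomy of Du--Matano applied to the family $\phi_\lambda=v_\alpha(\cdot/\lambda)$, and excludes the alternatives $0$ and the translated ground state because the solution stays above $v_\alpha$ while $\sup u_{\theta_c}=\theta_c<\alpha$. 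Your phase-plane identification is more elementary and self-contained (you should still record that the time-limit is radially non-increasing and that local uniform convergence follows from monotone convergence plus parabolic estimates), at the cost of being strictly one-dimensional.

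There is, however, a genuine gap in $d\geq 2$. The architecture announced in your Plan --- exhibit a sub-solution, deduce that the flow is non-decreasing in $t$, classify the limit --- cannot be executed with the two-piece affine ansatz: on the part of the ramp where $v_\alpha^{\rho,R}<\theta$ one has $-\sigma\Delta v_\alpha^{\rho,R}=-\sigma(d-1)g'/r>0>f(v_\alpha^{\rho,R})$, so this profile is \emph{not} a sub-solution and the solution starting from it is not monotone in time. Your fallback is the energy criterion, and your computation of the Dirichlet and potential terms does reproduce \eqref{def:Ralpha} exactly as in the paper; but the assertion that negative energy of the initial datum is ``a sufficient condition for the long-time limit of the evolution to be $1$'' is precisely the nontrivial input, not a routine consequence of Lyapunov decay (one must rule out convergence to $0$ or to nonconstant stationary states). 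The paper closes this step by citing Theorem 2 of Muratov--Zhou, which states that $\lim_{t\to\infty}E[p(t,\cdot)]<0$ forces $p\to 1$ locally uniformly, combined with $E[p(t,\cdot)]\leq E[v_\alpha]<0$ and the comparison principle to pass from the initial datum $v_\alpha$ to a general $p^0\geq v_\alpha$. You need either to quote such a result or to supply the classification of stationary states dominating $v_\alpha$ that you allude to; as written, this step is asserted rather than proved.
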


\begin{remark}
 Clearly, the set $\Gamma$ is nonempty. Indeed for $\rho \sim 1$, 
 \[
\int_0^\alpha (1-\frac{1-\rho}{\alpha}x)^d f(x)dx>0, 
 \]
since $F(\alpha) > 0$.   
 However, it is hard to say more unless we consider a specific function $f$.
\end{remark}

Theorem \ref{thm:invasion} is a well-known fact 
(see \cite{DuMat.Convergence,MatPol.Dynamics,MurZho.Threshold,Pol.Threshold,Zla.Sharp}), 
even though the explicit formulae \eqref{def:Ralpha}, \eqref{def:lalpha} are seldom found in the literature.
We postpone to Section~\ref{sub:thmproof} the proof of this result, which follows essentially from the ideas developped in~\cite{MurZho.Threshold}.

We recall the definition of a ``ground state'' as a positive stationary solution $v$ of \eqref{AllenCahn}, \textit{i.e.}:
$$
-\Delta v = f(v)
$$
that decays to $0$ at infinity.
In dimension $d=1$ (and in some special cases in higher dimensions, see \cite{MurZho.Threshold}), such a ground state is unique up to translations.
When $d=1$ we denote $v_{\theta_c}$ the ground state which is maximal at $x=0$.
It is symmetric decreasing and $v_{\theta_c} (0) = \theta_c$, which is consistent with the notation $v_{\alpha}$ in Theorem \ref{thm:invasion}.
Although we won't use it in the rest of the paper, we note that with a similar argument, we have a sufficient condition for the extinction:

\begin{proposition}
 In dimension $d=1$, let $p$ be a solution of equation~\eqref{AllenCahn}, associated with the initial value $p_0$.
 If $p_0 < \theta$ or $p_0 < v_{\theta_c}(\cdot - \zeta)$ for some $\zeta \in \R$,
 then $p$ goes extinct: $p \xrightarrow[t \to \infty]{} 0$ uniformly on $\R$.
 \label{prop:extinction}
\end{proposition}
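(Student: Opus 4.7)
The plan is to treat the two hypotheses by separate comparison arguments, both centred on the parabolic maximum principle applied to \eqref{AllenCahn} in one dimension, with a small amount of dynamical-systems input for the harder of the two.

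\textbf{Case $\sup p_0 < \theta$.} Set $M:=\sup_{\R} p_0<\theta$ and let $P(t)$ solve the ODE $P'=f(P)$ with $P(0)=M$. Since $f<0$ on $(0,\theta)$ and $f(0)=0$, $P$ is strictly decreasing and $P(t)\to 0$ as $t\to\infty$. The space-constant function $(t,x)\mapsto P(t)$ solves \eqref{AllenCahn} and dominates $p_0$ at $t=0$, so the comparison principle yields $0\leq p(t,x)\leq P(t)$, which already delivers uniform convergence to $0$ on $\R$.

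\textbf{Case $p_0<v_{\theta_c}(\cdot-\zeta)$.} After translation assume $\zeta=0$. Since $v_{\theta_c}$ is stationary, comparison gives $0\leq p(t,\cdot)\leq v_{\theta_c}$ for every $t\geq 0$; the orbit is thus bounded in $L^\infty$, standard parabolic regularity makes it precompact in $C_{\mathrm{loc}}(\R)$, and every accumulation point $q$ as $t_n\to\infty$ is a bounded nonnegative stationary solution of $-\sigma q''=f(q)$ satisfying $q\leq v_{\theta_c}$. A phase-plane analysis (the Hamiltonian $\sigma(q')^2/2+F(q)$ is conserved) classifies the bounded nonnegative stationary solutions as the constants $0$, $\theta$, $1$, the translates $v_{\theta_c}(\cdot-\zeta')$ of the ground state, and periodic orbits encircling the centre at $\theta$. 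The bound $q\leq v_{\theta_c}(0)=\theta_c<\theta$ eliminates $\theta$, $1$ and all periodic orbits, while $q\leq v_{\theta_c}$ together with the decay of both sides at infinity forces $\zeta'=0$ in any ground-state candidate, so that only $q\equiv 0$ and $q\equiv v_{\theta_c}$ remain.

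The final step, which I expect to be the main obstacle, is to exclude $q\equiv v_{\theta_c}$ from the $\omega$-limit set, and this is where the strict inequality in the hypothesis is used. Applying the strong maximum principle to $w:=v_{\theta_c}-p$ gives $w(t,\cdot)>0$ strictly for $t>0$, and combined with the exponential decay of $v_{\theta_c}$ this produces $t_0>0$ and $\epsilon>0$ with $p(t_0,\cdot)\leq v_{\theta_c}-\epsilon\psi_1$, where $\psi_1>0$ is the principal eigenfunction (exponentially decaying, simple) of the linearisation $L=\sigma\partial_{xx}+f'(v_{\theta_c})$ at the ground state. For a bistable ground state this eigenvalue $\lambda_1$ is positive, so comparing $w$ with the explicit solution $\epsilon e^{\lambda_1 t}\psi_1$ of the linearised equation on a neighbourhood of $v_{\theta_c}$ shows that $p(t,\cdot)$ cannot approach $v_{\theta_c}$ even in $C_{\mathrm{loc}}$. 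Hence $q\equiv 0$ is the only option, which gives locally uniform convergence; the upgrade to uniform convergence on $\R$ is then immediate from $0\leq p(t,\cdot)\leq v_{\theta_c}$ and the decay of $v_{\theta_c}$ at infinity. An energy-based alternative, using the decrease of $E(u)=\int(\sigma(u')^2/2-F(u))\,dx$ along the flow together with $E(p_0)<E(v_{\theta_c})$ (the ground state being a mountain pass), would reach the same conclusion.
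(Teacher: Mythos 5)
The paper never actually writes out a proof of Proposition~\ref{prop:extinction}: it is introduced with ``with a similar argument'' and is meant to follow from the sharp-threshold machinery of \cite{DuMat.Convergence,MurZho.Threshold} quoted for Theorem~\ref{thm:invasion}, in which ``initial data lying strictly below the ground state leads to extinction'' is part of the established trichotomy. Your self-contained dynamical-systems route is therefore genuinely different, and much of it is sound: the ODE supersolution in the first case is correct (note that you silently strengthen the hypothesis to $\sup p_0<\theta$, which is in fact necessary --- the pointwise inequality alone does not give \emph{uniform} extinction, as $p_0=\theta(1-e^{-x^2})$ shows); and the reduction of the $\omega$-limit set to $\{0,v_{\theta_c}\}$ is correct in substance. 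However, your stated reason for discarding the constant $\theta$ rests on the false inequality $\theta_c<\theta$ --- the paper has $\theta_c\in(\theta,1)$, so the bound $q\le\theta_c$ eliminates the constant $1$ but not the constant $\theta$, nor the periodic orbits around $\theta$; these are eliminated instead because $q\le v_{\theta_c}$ forces $q$ to decay at infinity.

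The genuine gap is in the exclusion of $v_{\theta_c}$ from the $\omega$-limit set, which you rightly flag as the crux. Two steps fail as written. First, $w(t_0,\cdot)=v_{\theta_c}-p(t_0,\cdot)>0$ together with the exponential decay of $v_{\theta_c}$ does \emph{not} yield $w(t_0,\cdot)\ge\epsilon\psi_1$ on all of $\R$: the strong maximum principle and heat-kernel lower bounds only give $w(t_0,x)\gtrsim e^{-cx^2}$, a Gaussian lower bound, whereas $\psi_1$ decays merely exponentially; if $p_0$ hugs $v_{\theta_c}$ at infinity, no $\epsilon>0$ works. Second, even granting that inequality, comparing $w$ with $\epsilon e^{\lambda_1 t}\psi_1$ requires this function to be a subsolution of $w_t=\sigma w_{xx}+c(t,x)w$ with $c=\frac{f(v_{\theta_c})-f(p)}{v_{\theta_c}-p}$, i.e.\ $c\ge f'(v_{\theta_c})$; this is a concavity-type condition on $f$ that a general bistable nonlinearity does not satisfy, and it only holds approximately when $\|w\|_\infty$ is already small --- which is precisely what you are trying to rule out. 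Both defects are repairable (replace $\psi_1$ by the compactly supported principal Dirichlet eigenfunction on a large interval $[-R,R]$, whose eigenvalue is still positive, and run the instability argument only inside a small $L^\infty$-neighbourhood of $v_{\theta_c}$, which suffices because the connected $\omega$-limit set must be one of the two singletons), but as written the key step does not go through. The energy alternative you sketch is also unfounded: $p_0<v_{\theta_c}$ does not imply $E[p_0]<E[v_{\theta_c}]$, since $p_0$ may have arbitrarily large Dirichlet energy.
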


\subsection{The stochastic framework for release profiles}
\label{stochasticframework}
When releasing mosquitoes in the field, the actual profile of \textit{Wolbachia} infection
in the days right after the release is very uncertain.
In order to quantify this uncertainty, we define in this section an adequate space of release profiles.
The pre-existing mosquito population is assumed to be homogeneously dense, at a level $N_0 \in \R_+$.

From now on, we assume that we have fixed a space unit, so that we may talk of numbers or densities of mosquitoes without any trouble.

We define a spatial process $X_{\cdot} (\omega)=X(\cdot,\omega) : \R^d \to \R_+$ as the introduced mosquitoes profile. 

We expect that the time dynamics of the infection frequency will be given by \eqref{AllenCahn}
\beq
\bepa
\p_t p -\sigma  \Delta p = f(p),
\\[10pt]
p(t=0, \tau; \omega) = \displaystyle \f{X_{\tau} (\omega)}{X_{\tau} (\omega) + N_0}.
\eepa
\eeq
We want to measure the probability of establishment associated with this set 
of initial profiles.

Making use of Theorem \ref{thm:invasion}, we want to give a lower bound for the probability of non-extinction 
(which is equivalent to $1$ minus the probability of invasion, by the sharpness of threshold solutions, 
as described in \cite{MatPol.Dynamics,MurZho.Threshold}).

An initial condition $X_{\tau}$ ensures non-extinction if 
\beq
\tag{NEC}
\exists \alpha \in (\theta_c, 1], \, \exists \tau_0 \in \R, \, \forall \tau \in \R^d, \, \f{X_{\tau}}{X_{\tau} + N_0} \geq v_{\alpha}(\tau+ \tau_0),
\label{nonext}
\eeq
where $v_{\alpha}$ is the ``$\alpha$-bubble'' used in Theorem \ref{thm:invasion}.

\paragraph{An example.}
Now, we assume that we have a known number of mosquitoes to release, say $N$. 
When we release mosquitoes in the field (out of boxes), they will spread out to find vertebrates to feed on (if not fed in the lab prior to the release), to mate or to rest. 
Many environmental factors may influence their spread (see \cite{Mac.Influence}). 
As a very rough estimate we consider that the distribution of the released mosquitoes can be described by a Gaussian. A Gaussian profile is typically the result of a diffusion process.
However, we shall not use very fine properties of these profiles, and mainly focus on a ``significant spread radius'',
so that this assumption is not too restrictive.

Due to the above  simplification, the set of releases profiles (``RP'') for 
a total of $N$ mosquitoes at $k$ locations in a domain $[-L, L]^d$ is defined as
\begin{equation}
RP_k^d (N) := \Big\{ \tau \mapsto \f{N}{k}  \sum_{i=1}^k \displaystyle\f{e^{- \f{(\tau - \tau_i)^2}{2 \sigma_i}}}{(2 \pi \sigma_i)^{d/2}}, 
 \text{ with } \tau_1,\dots,\tau_k \text{ in } [-L, L]^d, \, \sigma_i \in [\sigma_0 - \epsilon, \sigma_0 + \epsilon] \Big\},
\label{eq:rpset}
\end{equation}
where $\sigma_0$ is an estimated diffusion coefficient and $\epsilon > 0$ represents the uncertainty on this parameter ($\sigma_i$ is the ``significant spread radius'').
In other words, for any $i$ between $1$ and $k$, the release profile is locally at the $i$-th release point a centered Gaussian with fixed amplitude $N/k$ and variance $\sigma_i$.

The basic requirement for a release profile is that $\int_{\R^d} X_{\tau} d \tau = N$. It is obviously satisfied for the elements in $RP_k^d(N)$.

We use uniform measure on $\big( [-L, L]^d \times [\sigma_0 - \epsilon, \sigma_0 + \epsilon] \big)^k$ to equip $RP_k^d (N)$ with a probability measure, denoted $\mathcal{M}$ 
in the following.

According to our estimate, the success probability satisfies
\begin{multline}
\Pro[ \textit{Non-extinction after releasing $N$ mosquitoes at $k$ locations }] 
\\ \geq \Pro[ X_{\tau} (\omega) \textit{ satisfies \eqref{nonext}} ],
\tag{SP}
\label{sucessprobability}
\end{multline}
where $X_{\tau} (\omega)$ is taken in $RP_k^d (N)$ according to the uniform probability measure.

\subsection{First result: relevance of under-estimating success}

Though it may seem naive, our under-estimation by radii given in Theorem \ref{thm:invasion} 
is not extremely bad, and this can be quantified in any dimension $d$.
Indeed, in any dimension we can prove convergence of our under-estimation in \eqref{sucessprobability} 
to $1$ as the number of releases goes to infinity, if we fix the number of mosquitoes per release.

More precisely, we define for a domain $\Omega\subset\R^d$,
\begin{multline}
P^d_k (N, \Omega) := \mathcal{M}\big \{ (x_i)_{1 \leq i \leq k}, \exists \alpha \in (\theta_c, 1), \exists x_0 \in \Omega, 
\\ x_0 + B_{R_{\alpha}} \subset \Omega \text{ and } \forall x \in x_0 + B_{R_{\alpha}}, \, \f{N}{k} \sum_{i=1}^k G_{\sigma,d} (x - x_i) \geq \alpha \big \},
\end{multline}
where $G_{\sigma,d} (y) = \f{1}{(2 \pi \sigma)^{d/2}} e^{-|y|^2/ 2 \sigma}$ and $B_{R_{\alpha}}=B_{R_{\alpha}} (0)$ is the ball of radius $R_{\alpha}$, centered at $0$.
Then, the probability of success of a random (in the sense of Section \ref{stochasticframework})
$k$-release of $N$ mosquitoes in the $d$-dimensional domain $\Omega$ is bigger than $P^d_k (N, \Omega)$, because of Theorem \ref{thm:invasion}.

Fixing the number of mosquitoes per release and letting the number of releases go to $\infty$ yields:
\begin{proposition}
 Let $1 > \alpha > \theta_c$, $N \geq N^* := (2 \pi \sigma)^{d/2} \f{\alpha}{1 - \alpha} N_0$ and $\Omega \subset \R^d$ a compact set containing a ball of radius $R_{\alpha}$. Then, 
 \beq
  P^d_k (k N, \Omega) \xrightarrow[k \to \infty]{} 1.
  \label{proba1anydim}
 \eeq
 \label{prop:coupon}
\end{proposition}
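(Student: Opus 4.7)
The plan is to show, via a uniform law of large numbers, that the cumulative Gaussian density
\[
x \;\longmapsto\; N \sum_{i=1}^k G_{\sigma,d}(x - X_i)
\]
grows linearly in $k$ on any fixed compact subset of $\Omega$, while the threshold required to enter the event defining $P^d_k$ is a fixed constant. Since $\Omega$ contains a ball of radius $R_\alpha$ by hypothesis, I fix once and for all a point $x_0 \in \Omega$ with $K := x_0 + B_{R_\alpha} \subset \Omega$. I would model the release locations as i.i.d.\ random variables $X_1, \ldots, X_k$ distributed according to the marginal of $\mathcal{M}$ on the ambient cube (the spread parameters $\sigma_i$ play no asymptotic role, so one may freeze them at $\sigma$). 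Writing
\[
S_k(x) := \frac{1}{k} \sum_{i=1}^k G_{\sigma,d}(x - X_i), \qquad g(x) := \mathbb{E}\bigl[G_{\sigma,d}(x - X_1)\bigr],
\]
the release profile evaluated at $x$ equals $N k\,S_k(x)$, and the event in the definition of $P^d_k(kN,\Omega)$ is implied by $N k\,S_k(x) \geq \alpha$ for all $x \in K$.

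The core step is to upgrade the pointwise strong LLN $S_k(x) \to g(x)$ to uniform convergence on the compact $K$, i.e.\ $\sup_{x\in K} |S_k(x) - g(x)| \to 0$ almost surely. I would do this by covering $K$ with a finite $\varepsilon$-net $\{y_j\}_{j=1}^J$, applying Hoeffding's inequality (or the pointwise LLN) at each $y_j$, and interpolating to arbitrary $x \in K$ via the Lipschitz constant of $G_{\sigma,d}$; equivalently, Glivenko--Cantelli applies since the family $\{y \mapsto G_{\sigma,d}(x-y) : x \in K\}$ is uniformly bounded and equicontinuous. Once this is done, $g$ is a convolution of a Gaussian with the indicator of the ambient cube, hence continuous and strictly positive, so $m := \min_{x \in K} g(x) > 0$. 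Therefore with probability tending to $1$ one has $S_k \geq m/2$ uniformly on $K$, which gives $N k\,S_k(x) \geq Nkm/2$ throughout $K$; for $k$ large enough this exceeds the fixed threshold $\alpha \in (\theta_c, 1)$. The pair $(x_0, \alpha)$ then witnesses the event in the definition of $P^d_k(kN, \Omega)$, and the proposition follows.

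The principal obstacle is the uniform LLN: the pointwise version is immediate, but simultaneous control across all $x \in K$ requires the covering/interpolation argument just sketched, relying on the equicontinuity of translates of $G_{\sigma,d}$. I note that the hypothesis $N \geq N^*$ is not really used in the asymptotic—any $N > 0$ would suffice once $k$ is large, since $k S_k(x)$ diverges—but it is the biologically meaningful regime for which the statement is informative, ensuring that each individual release is already supra-threshold at its peak.
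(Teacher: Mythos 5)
Your proof is correct, but it takes a genuinely different route from the paper's. The paper argues combinatorially: it meshes the target ball $x_0+B_{R_\alpha}$ into finitely many cells of diameter at most $\sqrt{2\sigma\log 2}/2$, invokes the coupon-collector problem to show that with probability tending to $1$ every cell receives at least one release point, and then uses the pointwise bound $e^{-\lVert y\rVert^2/2\sigma}\geq 1/2$ together with the hypothesis $N\geq N^*$ so that the single nearest Gaussian already lifts the profile above the threshold. You instead aggregate all $k$ Gaussians: since the total mass is $kN$, the profile equals $kN\,S_k(x)$, and a uniform law of large numbers ($\varepsilon$-net, Hoeffding, equicontinuity of translates of $G_{\sigma,d}$) gives $kN\,S_k\approx kN\,g\to\infty$ uniformly on the ball, where $g=\mathbb{E}[G_{\sigma,d}(\cdot-X_1)]$ is continuous and strictly positive. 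Each approach buys something. Yours dispenses with $N\geq N^*$ altogether, proving the result for every $N>0$ in one stroke and thereby subsuming the paper's subsequent Corollary without the ``$\iota$ coupons of each kind'' device; it also yields exponential concentration rates. The paper's coupon-collector route is what supports the quantitative Remark that follows ($\mathbb{E}[k]\sim N_c\log N_c$ with $N_c$ the number of cells), a local-coverage estimate that your averaged bound replaces by a global-saturation count of order $k\gtrsim \frac{1}{N\min_K g}\cdot\frac{\alpha}{1-\alpha}N_0$. The only caveat is bookkeeping: make sure the constant you beat is the one in the paper's definition of $P^d_k$ (written there as $\alpha$, though the intended threshold is $\frac{\alpha}{1-\alpha}N_0$); since both are fixed independently of $k$, your divergence argument covers either reading.
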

\begin{proof}
  There are two ingredients for the proof: First, we minimize a Gaussian at $x$ on a ball centered at $x$ by its value on the border of the ball.
  Second, if we pick uniformly an increasing number of balls with fixed radius and center in a compact domain, then their union covers almost-surely any given subset 
  (this second ingredient is connected with the well-known coupon collector's problem).
  Namely,
  $$
  \lVert y \rVert \leq \sqrt{2 \sigma \log(2)} \implies e^{- \lVert y \rVert^2 / 2 \sigma} \geq 1/2.
  $$
    
    Now, when we pick uniformly in a compact set the centers of balls of fixed radius $\alpha$, the probability of covering a given subset $\Omega_c \subset \Omega$ increases with the number $k$ of balls.
    Therefore it has a limit as $k \to + \infty$. In fact, this limit is equal to $1$.
    
    One can prove this claim using the coupon collector problem (see the classical work \cite{Erd.Classical} for the main results on this problem), after selecting a mesh for the compact domain $\Omega_c$.
    We take this mesh such that each cell has diameter less than $\sqrt{2 \sigma \log(2)}/2$, and positive measure.
    The domain $\Omega$ is compact, hence finitely many cells is enough.
    Picking the center of a random ball in a given cell of the mesh has probability $>0$, and we simply need to have picked one center in each element to be done.
    It remains to choose the (compact) set $\Omega_c = B_{R_{\alpha}} + x_0 \subset \Omega$ to conclude the proof.
\end{proof}

\begin{remark}
 We could have been a little more precise, and get an estimate for the expected value of the number $k$ of small balls required to cover the domain.
 According to classical results \cite{Erd.Classical} on the coupon collector problem, it typically grows as $N_c \log(N_c)$, where $N_c$ is the number of cells. 
    If the domain $\Omega$ has diameter $R$, $N_c$ is typically $(2 R/\sqrt{2 \sigma \log(2)})^{d}$, in dimension $d$.
    
    Therefore we should expect $\mathbb{E} [k] \sim d \big( \f{2 R}{\sqrt{2 \sigma \log(2)} }\big)^d \log( \f{2 R}{\sqrt{2 \sigma \log(2)}})$, and for a typical release area $R$ should be of the same order as $R_{\alpha}$.
   
\end{remark}

In fact, any $N > 0$ enjoys the same property, but then we need to assume that each cell contains a large enough number of release points.

\begin{corollary}
 For any $N > 0$ and $\alpha \in (\theta_c, 1)$, for $\Omega \subset \R^d$ a compact set containing a ball of radius $R_{\alpha}$, then for any compact subset $\Omega_c \subset \Omega$ containing a ball of radius $R_{\alpha}$ we have
 \[
  P_k^d (k N, \Omega_c) \xrightarrow[k \to \infty]{} 1.
 \]
\end{corollary}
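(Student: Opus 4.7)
The plan is to adapt the proof of Proposition~\ref{prop:coupon} by upgrading the ``at least one release per cell'' requirement of the coupon collector argument to an ``at least $m$ releases per cell'' requirement, where $m=m(N,\alpha)$ is chosen large enough to compensate for the small amplitude $N$ of each individual Gaussian. This is the natural move once we drop the lower bound $N \geq N^*$ used in Proposition~\ref{prop:coupon}: a single release no longer suffices to reach height $\alpha$, but stacking several of them does.

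Concretely, I would first fix $m \in \N$ such that $mN \geq 2\alpha (2\pi\sigma)^{d/2}$, which is possible for any $N>0$ by taking $m$ sufficiently large. With this choice, if $m$ release centers $x_{i_1},\ldots,x_{i_m}$ all lie within distance $\sqrt{2\sigma\log 2}$ of some point $x \in \R^d$, the elementary bound $e^{-\|y\|^2/2\sigma}\geq 1/2$ for $\|y\|\leq\sqrt{2\sigma\log 2}$ (already used in the proof of Proposition~\ref{prop:coupon}) gives
\[
N\sum_{\ell=1}^{m}G_{\sigma,d}(x-x_{i_\ell})\ \geq\ \frac{mN}{2(2\pi\sigma)^{d/2}}\ \geq\ \alpha.
\]

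Next, I would mesh the compact set $\Omega_c$ by finitely many cells $C_1,\ldots,C_{N_c}$ of positive Lebesgue measure and diameter at most $\sqrt{2\sigma\log 2}$. Denoting $p_j>0$ the probability that a uniformly-chosen release point in $\Omega$ falls in $C_j$, the number of release points in $C_j$ after $k$ independent draws is $\mathrm{Binomial}(k,p_j)$, with mean $kp_j\to\infty$ as $k\to\infty$. Hence for each $j$ the probability that $C_j$ contains at least $m$ draws tends to $1$, and the union bound over the finitely many cells shows that the probability that \emph{every} cell contains at least $m$ release centers tends to $1$ as $k\to\infty$. On this high-probability event, every point $x$ of a fixed ball $x_0+B_{R_\alpha}\subset\Omega_c$ lies in some cell $C_j$, whose $m$ centers are all within distance $\sqrt{2\sigma\log 2}$ of $x$; the displayed inequality above then yields $N\sum_{i=1}^{k}G_{\sigma,d}(x-x_i)\geq\alpha$, which is precisely the condition defining $P^d_k(kN,\Omega_c)$. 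Therefore $P^d_k(kN,\Omega_c)\to 1$.

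I do not expect a genuine obstacle: compared to Proposition~\ref{prop:coupon} the only change is to invoke the ``multi-coverage'' variant of the coupon collector problem (the double dixie cup problem) instead of its plain version. Both are controlled by the same law-of-large-numbers mechanism, which is amply sufficient for mere convergence to $1$; only the quantitative estimate on $\mathbb{E}[k]$ would be affected, now scaling like $m N_c(\log N_c + (m-1)\log\log N_c)$, with $N_c$ the number of cells.
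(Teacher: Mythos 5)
Your proposal is correct and is essentially the paper's own argument: the paper's proof is a one-line reduction to the coupon collector problem in which $\iota = \llceil N^*/N \rrceil$ coupons of each kind must be collected, which is exactly your ``at least $m$ releases per cell'' (double dixie cup) variant, with the same mesh of diameter $O(\sqrt{2\sigma\log 2})$ and the same Gaussian lower bound $e^{-\|y\|^2/2\sigma}\geq 1/2$. Your write-up merely supplies the details the paper leaves implicit (the binomial/law-of-large-numbers justification of multi-coverage), so there is nothing to correct.
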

\begin{proof}
 Let $\iota = \llceil \frac{N^*}{N} \rrceil$. With the same technique as for proving Proposition \ref{proba1anydim}, we get a coupon collector problem where $\iota$ coupons of each kind must be collected, whence the result.
\end{proof}

\subsection{Proof of invasiveness in Theorem \ref{thm:invasion} in any dimension}
\label{sub:thmproof}

We consider in this section the proof of Theorem \ref{thm:invasion} in any dimension. 
The case $d=1$ is postponed to the next section.

We use an approach based on the energy as proposed by \cite{MurZho.Threshold}.
In the present context, the energy is defined by
\begin{equation}\label{energy}
  E[u] = \int_{\R^d} \big( \f{\sigma}{2} \lvert \nabla u \rvert^2 - F(u(x))\big) dx.
\end{equation}
It is straightforward to see that if $p$ is a solution to \eqref{eq:p}, 
then the energy is non-increasing along a solution, \textit{i.e.},
$$
\frac{d}{dt} E[p] = - \int_{\R^d} \big( \sigma \Delta p + f(p)\big)^2 \,dx \leq 0.
$$
Thus, $E[p](t)\leq E[p^0]$ for all nonnegative $t$ and for $p$ solution to \eqref{eq:p}.
Moreover, Theorem 2 of \cite{MurZho.Threshold} states that if 
$\lim_{t\to +\infty} E[p(t,\cdot)] <0$, then $p(t,\cdot)\to 1$ locally
uniformly in $\R^d$ as $t\to +\infty$.
Thus, since $t\mapsto E[p(t,\cdot)]$ is non increasing, it is enough to 
choose $p^0$ such that $E[p^0]<0$ to conclude the proof of Theorem \ref{thm:invasion}.

For any $\alpha > \theta_c$, we construct $p^0(x)=v_\alpha(|x|)$ as defined in the statements 
of Theorem \ref{thm:invasion}.
To do so, consider the family of non-increasing radially symmetric functions, 
compactly supported in $B_{R_0}$ with $R_0>0$,
indexed by a small radius $0 < r_0 < R_0$,
defined by
$\phi(r) = 1$ if $r \leq r_0$, $\phi(r) = \f{R_0 - r}{R_0 - r_0}$ if $r_0 < r < R_0$, and $\phi(r) \equiv 0$ if $r > R_0$.

For any $0 < r_0 < R_0$ $\phi$ is continuous and piecewise linear. 
We define $v_\alpha(r)=\alpha \phi(r)$, for $r\geq 0$.
By the comparison principle, 
it suffices to find $(r_0, R_0)$ such that $E[\alpha \phi] < 0$ 
to ensure that $R_{\alpha} = R_0$ is suitable in Equation~\eqref{def:Ralpha} of 
Theorem \ref{thm:invasion}.
To do so, we introduce
\begin{equation}
J_d(r_0, R_0, \alpha, \phi) := \f{E[\alpha \phi]}{\lvert S^{d-1} \rvert} = \alpha^2 \sigma \int_0^{\infty} r^{d-1} \lvert \nabla \phi (r) \rvert^2 dr 
- \big( \f{r_0^d}{d} F(\alpha) + \int_{r_0}^{R_0} r^{d-1} \int_0^{\alpha \phi(r)} f(s) ds dr \big).
\end{equation}

Now, we use our specific choice of non-increasing radially symmetric function $\phi$.
Introducing $\rho := r_0 / R_0$, and with obvious abuses of notation, $J_d$ stands again for
\begin{equation}
 J_d (\rho, R_0, \alpha) := R_0^d \Big( \f{\sigma}{d R_0^2} \f{1 - \rho^d}{(1 - \rho)^2} - F(\alpha) \f{\rho^d}{d} 
  - \f{1-\rho}{\alpha}\int_0^{\alpha} \big(1 - \f{1-\rho}{\alpha} x \big)^{d-1} F(x) dx \Big),
\end{equation}
where $F$ is the antiderivative of $f$ (as introduced in \eqref{def:F}).
After an integration by parts, we have
$$
J_d (\rho, R_0, \alpha) = R_0^d \Big( \f{\sigma}{d R_0^2} \f{1 - \rho^d}{(1 - \rho)^2} -
\int_0^{\alpha} \big(1 - \f{1-\rho}{\alpha} x \big)^d f(x) dx \Big).
$$
We choose $\rho\in (0,1)$ such that 
\begin{equation}
\int_0^{\alpha} \big(1 - \f{1-\rho}{\alpha} x \big)^d f(x) dx >0
  \label{positiveR}
\end{equation}
Then the energy $J_d(\rho,R_0,\alpha)$ decreases to $- \infty$ with $R_0$ and is positive for $R_0 \to 0$, 
so the minimal scaling ensuring negative energy is obtained for some known value of $R_0 =: R^{(d)}_{\alpha} (\rho)$, such that $J_d (\rho, R^{(d)}_{\alpha} (\rho), \alpha) = 0$.
Namely,
\begin{equation}
 \big( R_{\alpha}^{(d)} (\rho) \big)^2 = \sigma \f{1 - \rho^d}{(1 - \rho)^2} \f{1}{\int_0^{\alpha} \big( 1 - \f{1-\rho}{\alpha} x \big)^d f(x) dx},
 \label{Rsquare}
\end{equation}
which is a rational fraction in $\rho$.
Thus we recover formula \eqref{def:Ralpha} in Theorem \ref{thm:invasion} by minimizing
with respect to those $\rho$ satisfying constraint \eqref{positiveR}.
\qed

We examine in particular formula \eqref{Rsquare} in the case $d=1$.
To do so, we introduce 
\begin{equation}
 U(\alpha) := F(\alpha) - \f{1}{\alpha} \int_0^{\alpha} F(x) dx, \quad V(\alpha) := \f{1}{\alpha} \int_0^{\alpha} F(x) dx.
 \label{eq:UV}
\end{equation}
Since $F(x) \leq F(\alpha)$ for $x \leq \alpha$, we know that $U$ is positive and $V$ is increasing  with respect to $\alpha$ ($V'(\alpha) = \f{1}{\alpha} U(\alpha)$). Moreover, $V(\theta_c) < 0$.
We get
\begin{equation}
 R^{(1)}_{\alpha} (\rho) = \f{\alpha \sqrt{\sigma}}{\sqrt{(1-\rho) (V(\alpha) + \rho U(\alpha) )}},
\end{equation}
under the constraint $V(\alpha) + \rho U(\alpha) > 0$.
The optimal choice for $\rho$ is then $\rho^*_1 (\alpha) := \f{1}{2} - \f{1}{2} \f{V(\alpha)}{U(\alpha)}$.
It satisfies $V(\alpha) + \rho^*_1 (\alpha) U(\alpha) > 0$ since $U(\alpha) = F(\alpha) - V(\alpha) > 0$ and $F(\alpha) > 0$.

Finally, $\rho^*_1$ corresponds to a minimal radius
\begin{equation}
 R^{(1), *}_{\alpha} := R^{(1)}_{\alpha} (\rho^*_1(\alpha)) = 2 \sqrt{\sigma} \f{\alpha \sqrt{U(\alpha)}}{F(\alpha)},
\end{equation}
with $U(\alpha)$ as in \eqref{eq:UV}.

\begin{remark}
We emphasize that $R_\alpha$ 
quantifies the minimal radius which ensures invasion from level $\alpha$,
 in the sense that it provides an upper bound for it.
However, we were not able to perform an analytical computation
of the actual optimal radius (=support size) of a critical bubble.
\end{remark}

\begin{remark}
 We note in passing that the same energy \eqref{energy} appears for instance in the review paper \cite{BarHew} and in associated literature, but is used in a different spirit (stemming from statistical physics).
\end{remark}

Before restricting to dimension $1$ in the sequel, we end the general exposition in this section with a numerical illustration. In order to help the reader getting a clearer picture of the invasion problem we investigate in the present paper, Figure~\ref{fig:evolution} displays the time dynamics of equation \eqref{AllenCahn} in two spatial dimensions, with three different initial conditions. It illustrates the fact that with a fixed number of release points taken uniformly in a rectangle, invasion typically appears only if the size of the rectangle is well chosen.

If it is too small (Figure \ref{fig:evolution}-Right) the pressure of the surrounding {\it Wolbachia}-free environment is too strong for the infection to propagate. If it is too large (Figure \ref{fig:evolution}-Left), the release points are likely to be too scattered and never reach and invasion threshold. Whereas in Figure \ref{fig:evolution}-Center, the release area and the number of releases is sufficient to generate a wide enough domain of {\it Wolbachia}-infected mosquitoes which spreads for larger times.

\begin{figure}
\includegraphics[width=.33\textwidth]{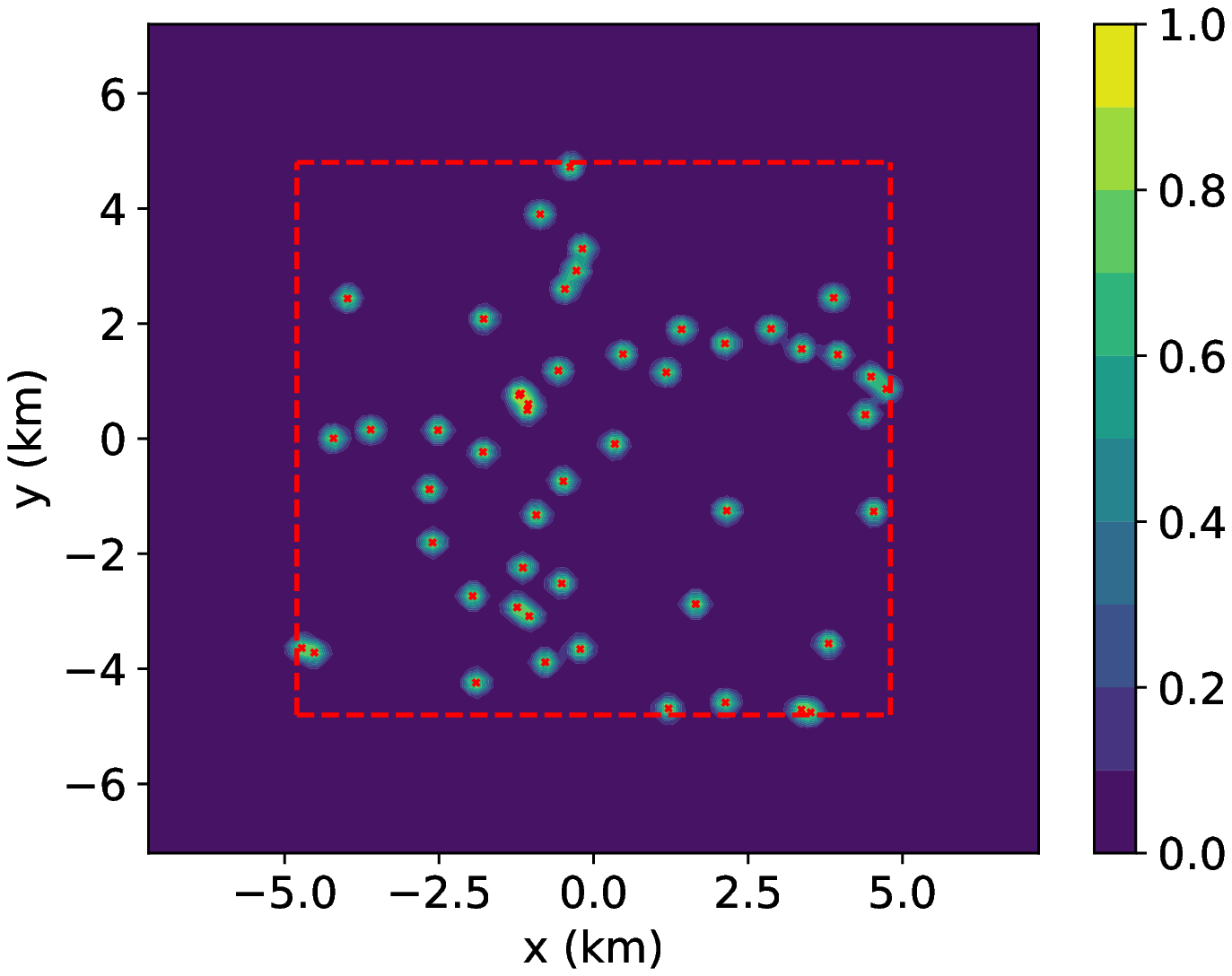}
\includegraphics[width=.33\textwidth]{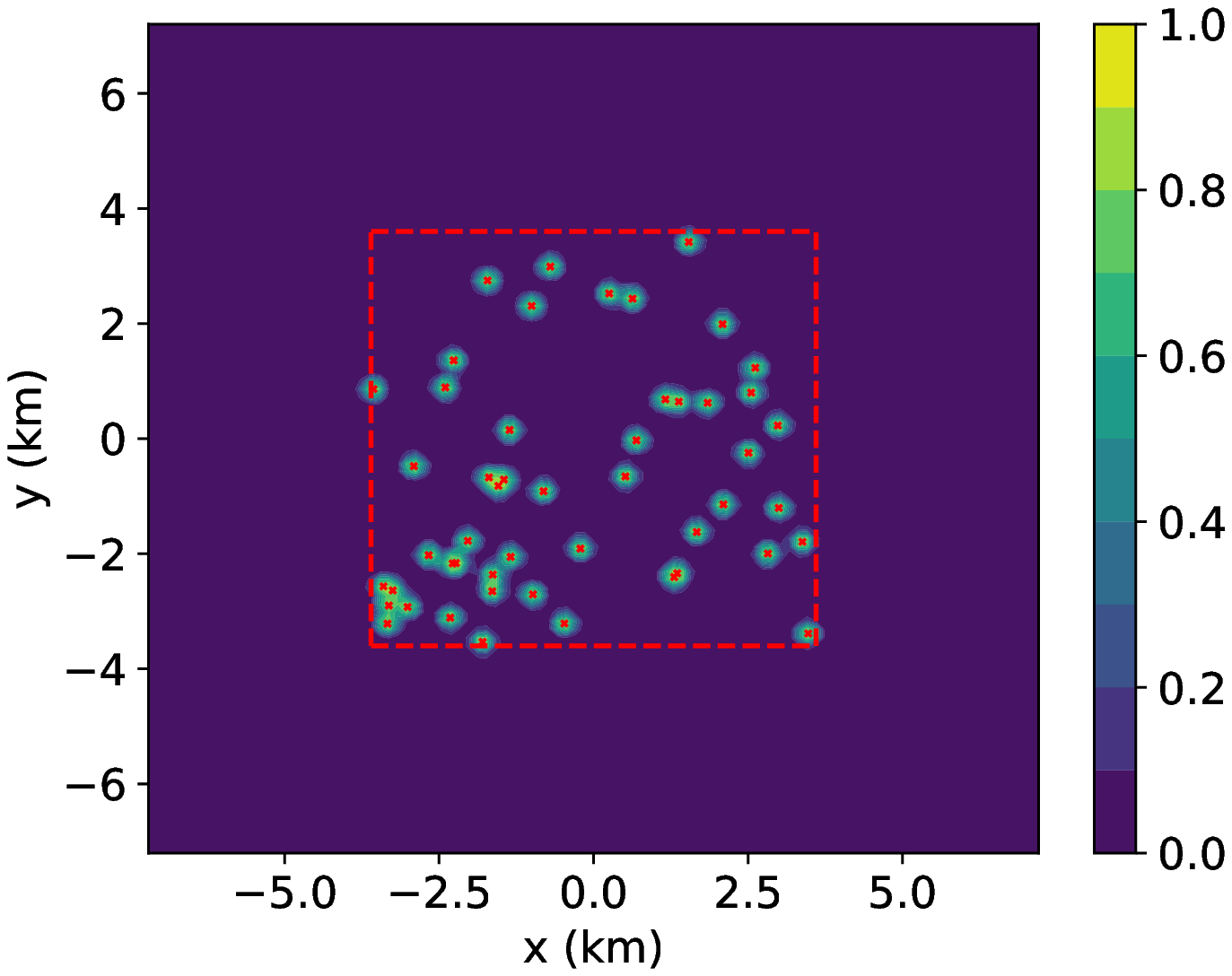}
\includegraphics[width=.33\textwidth]{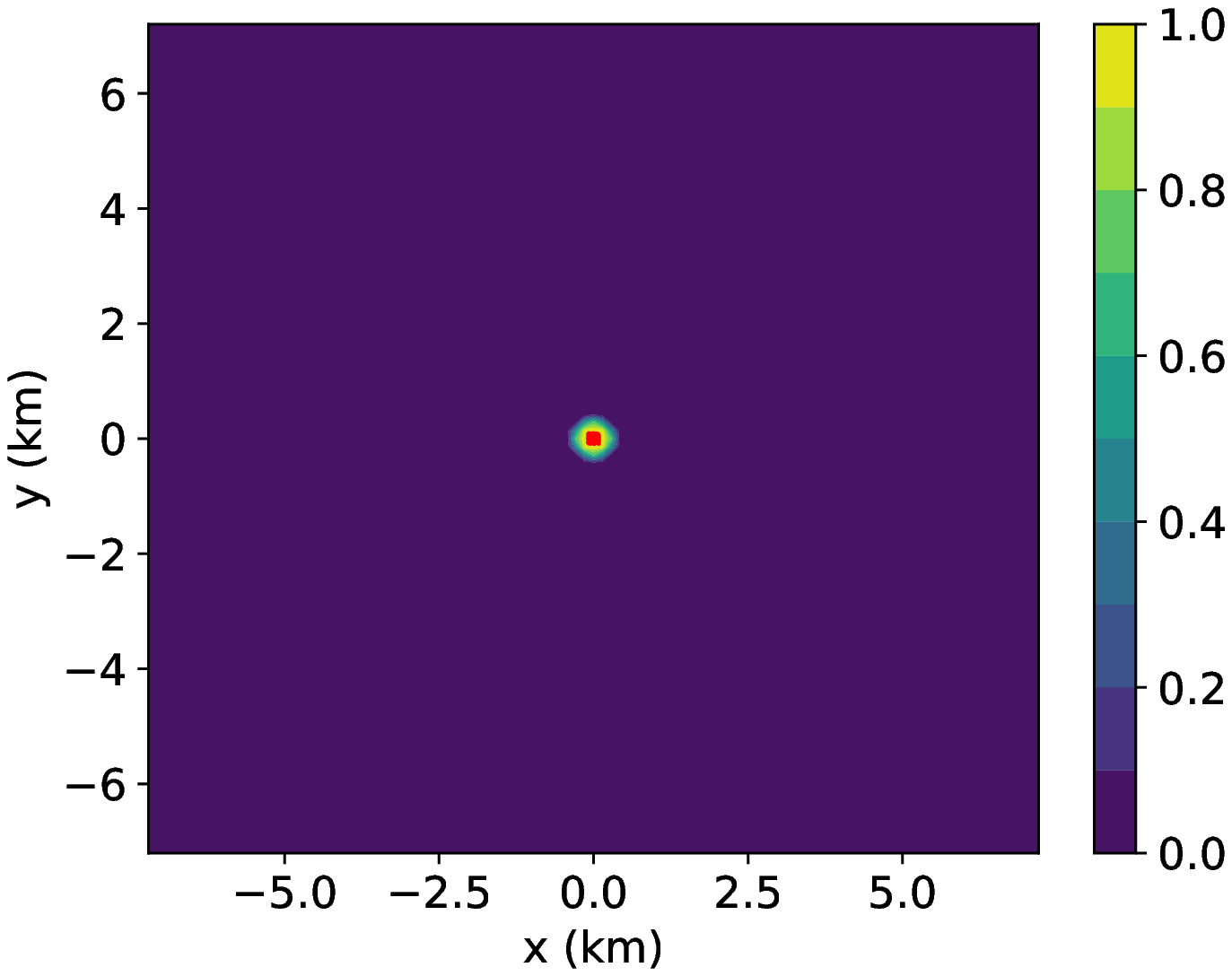}
\includegraphics[width=.33\textwidth]{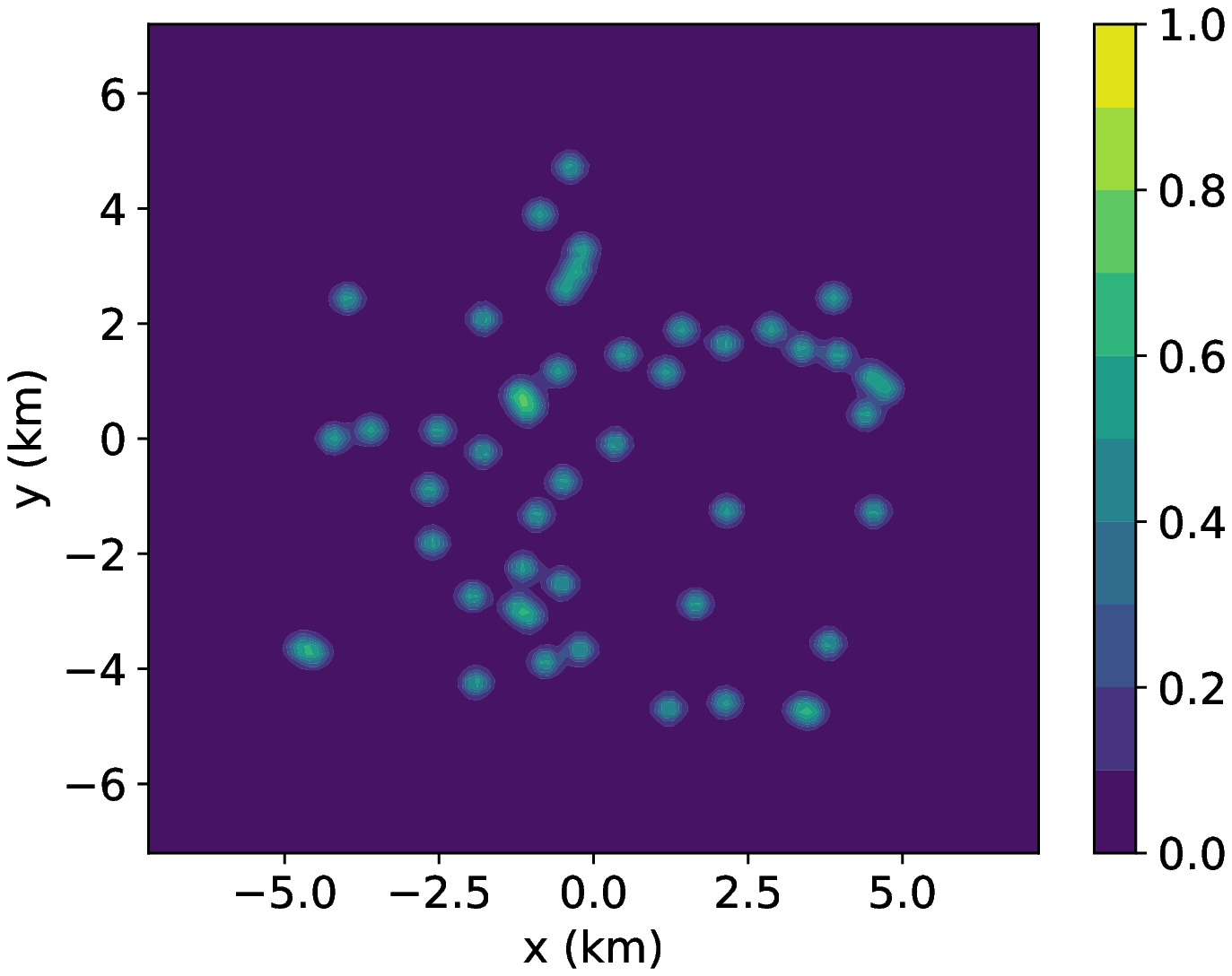}
\includegraphics[width=.33\textwidth]{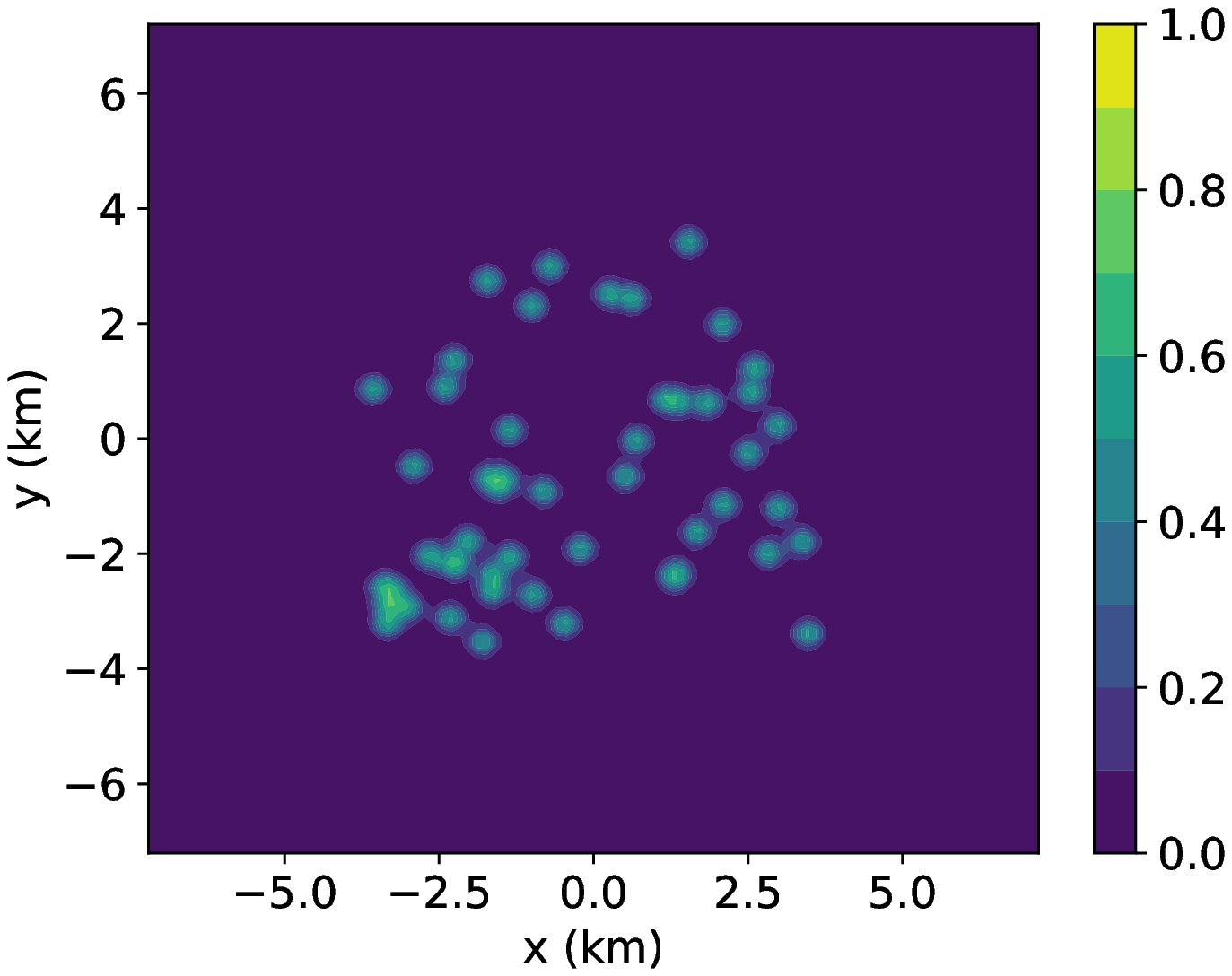}
\includegraphics[width=.33\textwidth]{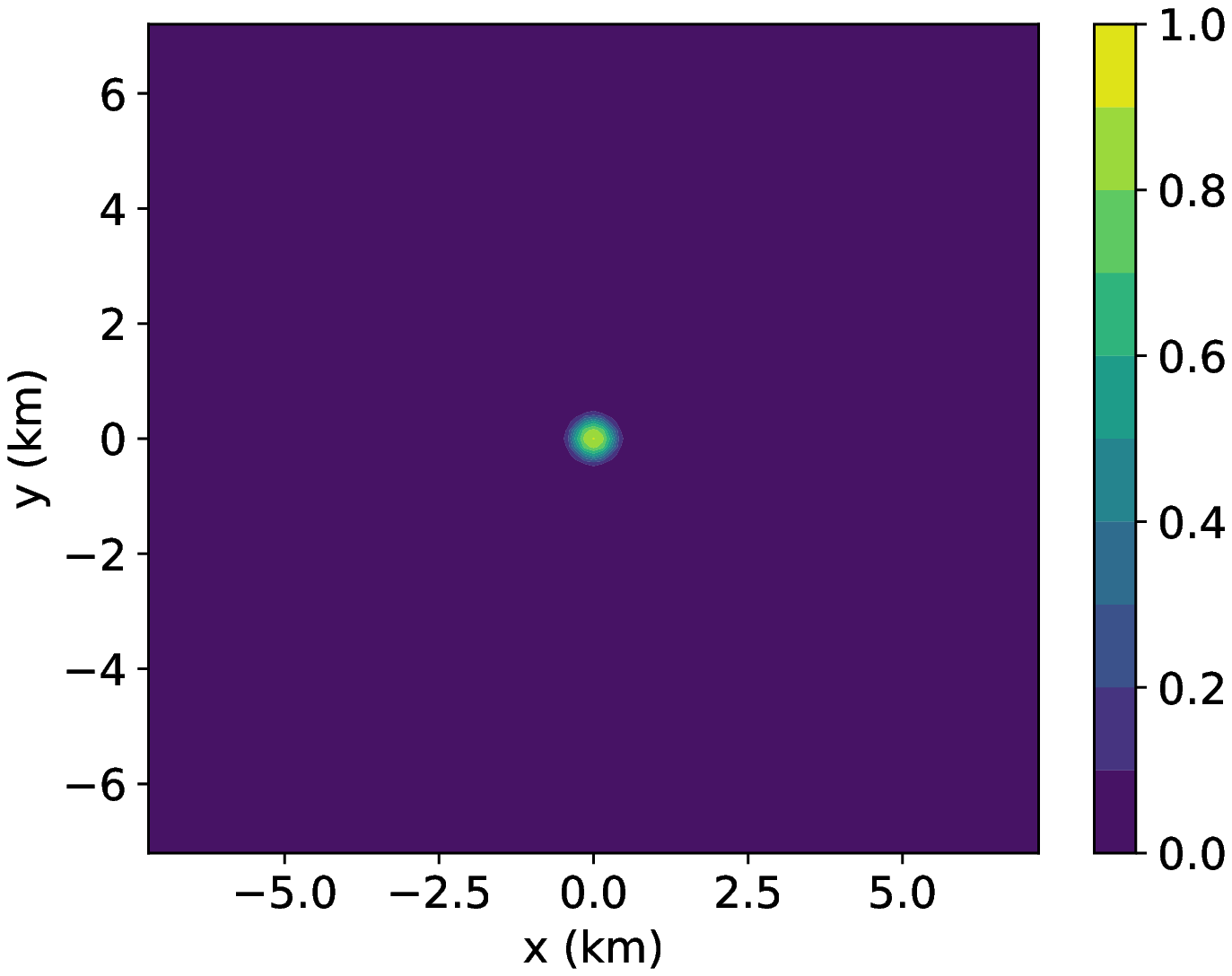}
\includegraphics[width=.33\textwidth]{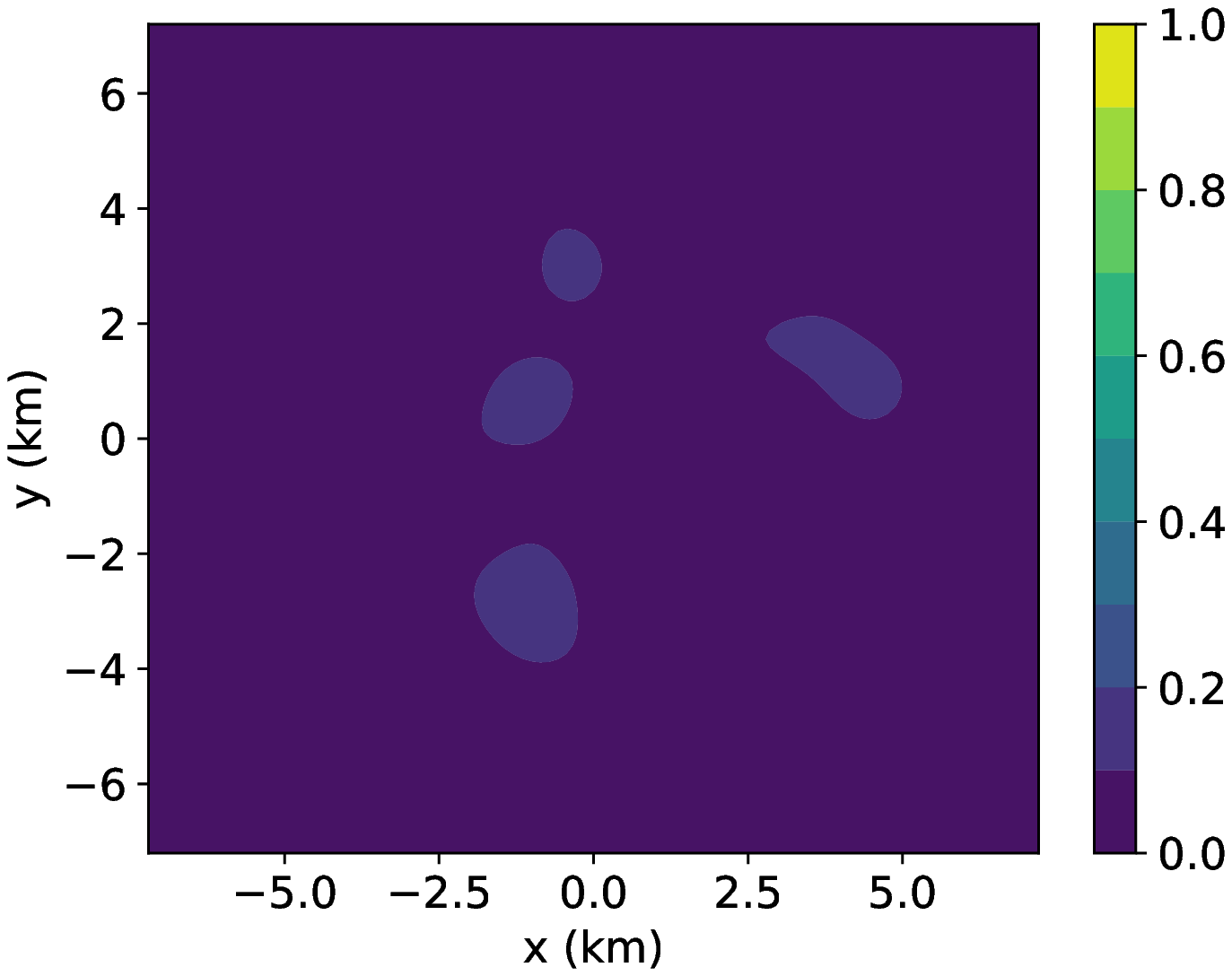}
\includegraphics[width=.33\textwidth]{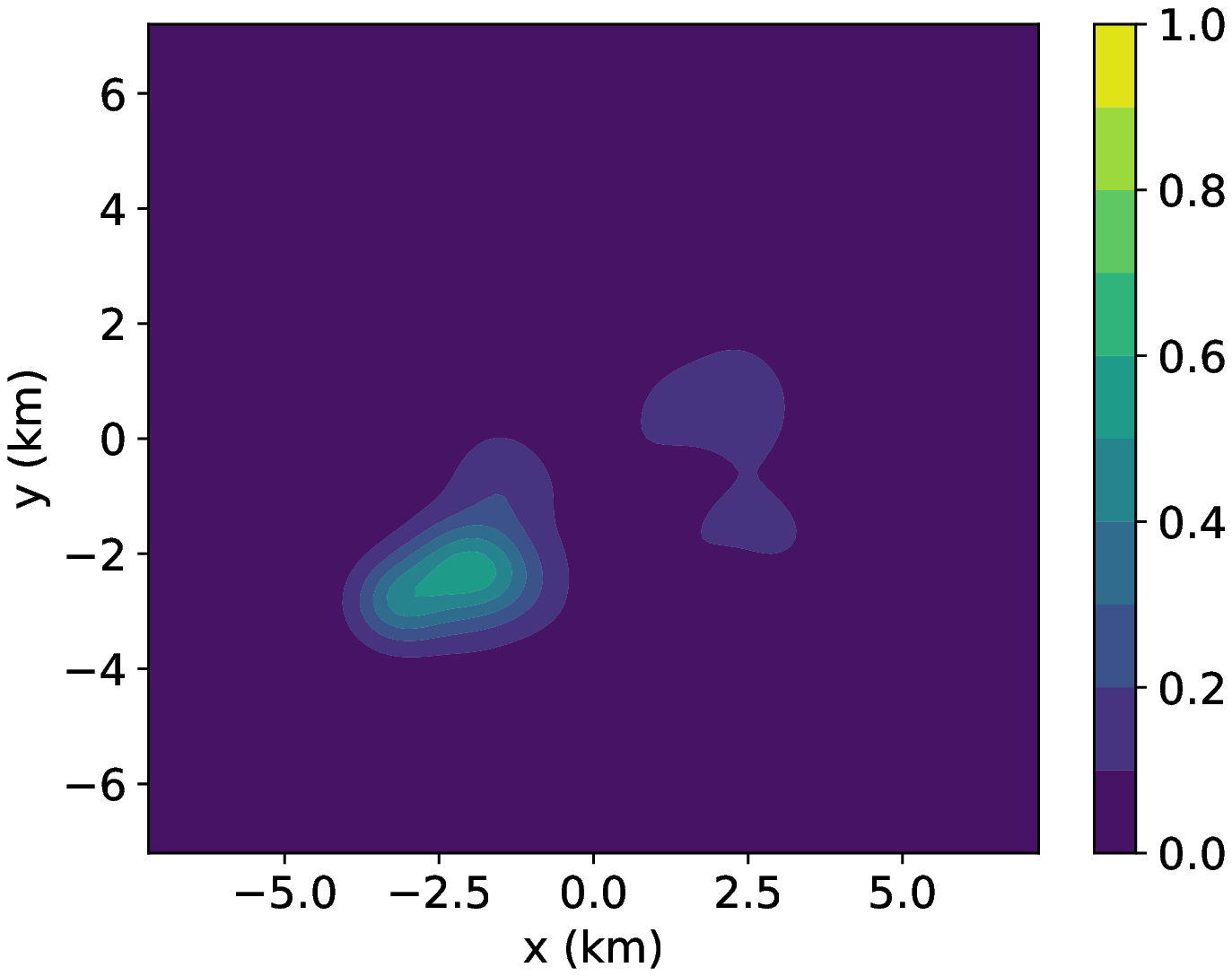}
\includegraphics[width=.33\textwidth]{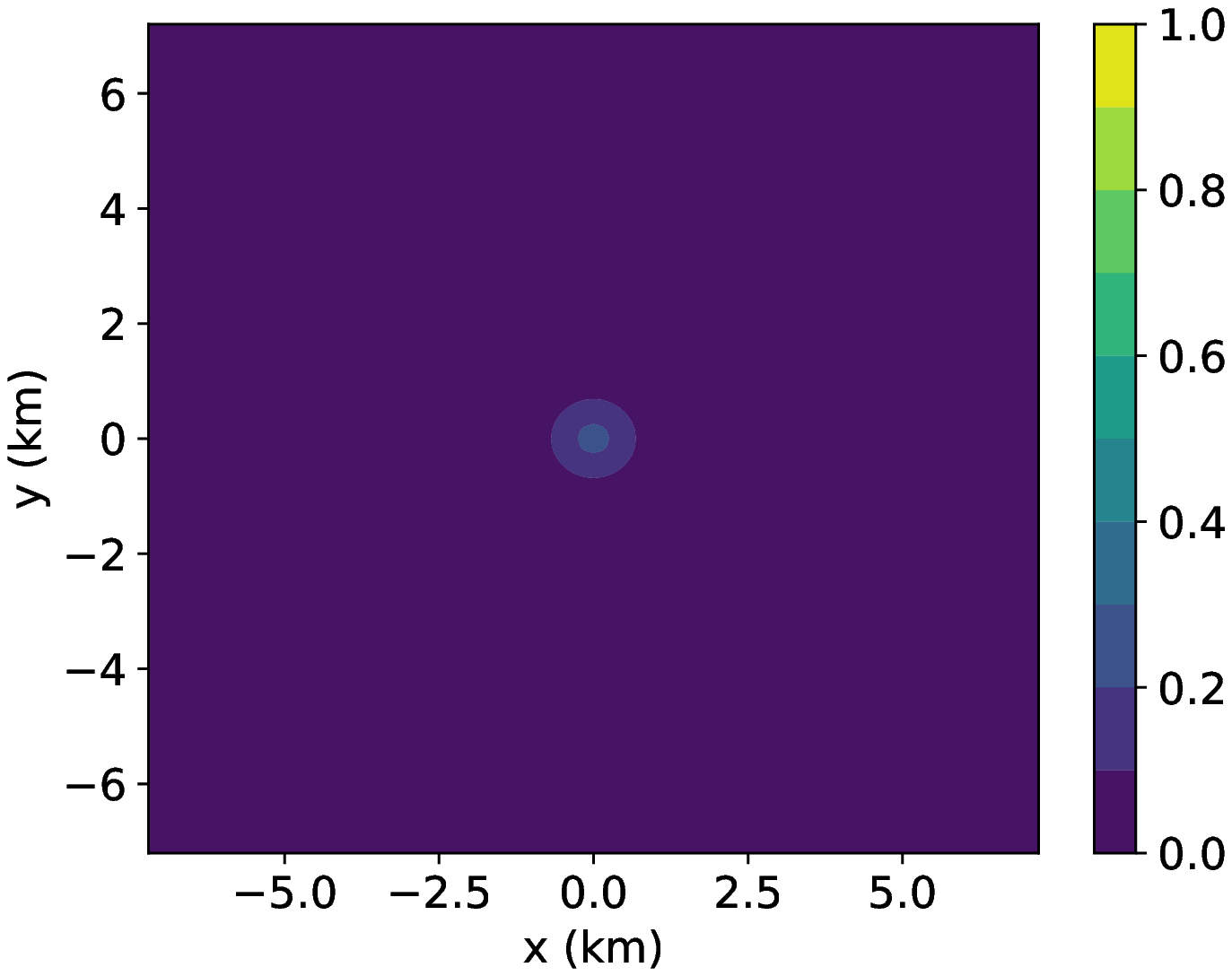}
\includegraphics[width=.33\textwidth]{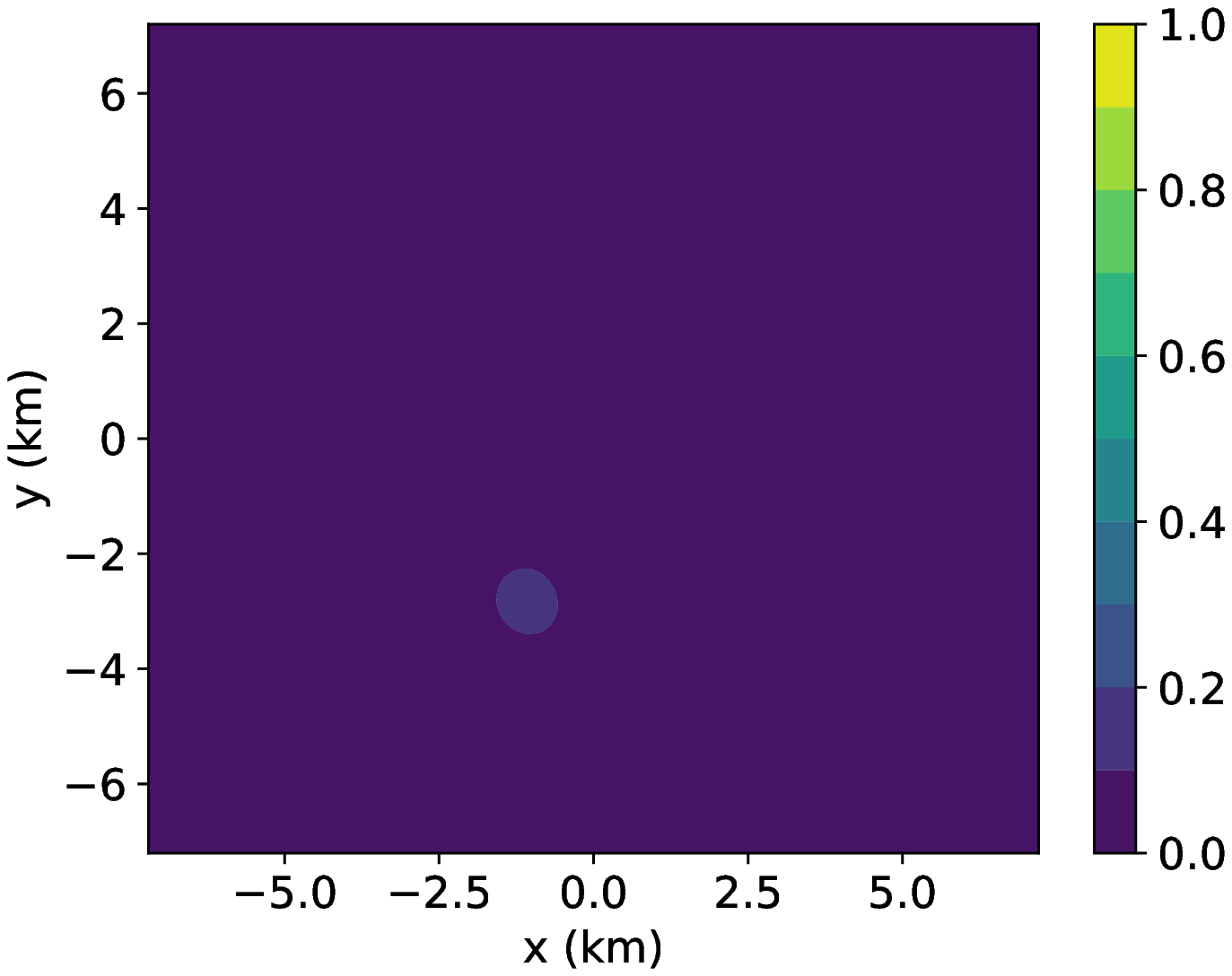}
\includegraphics[width=.33\textwidth]{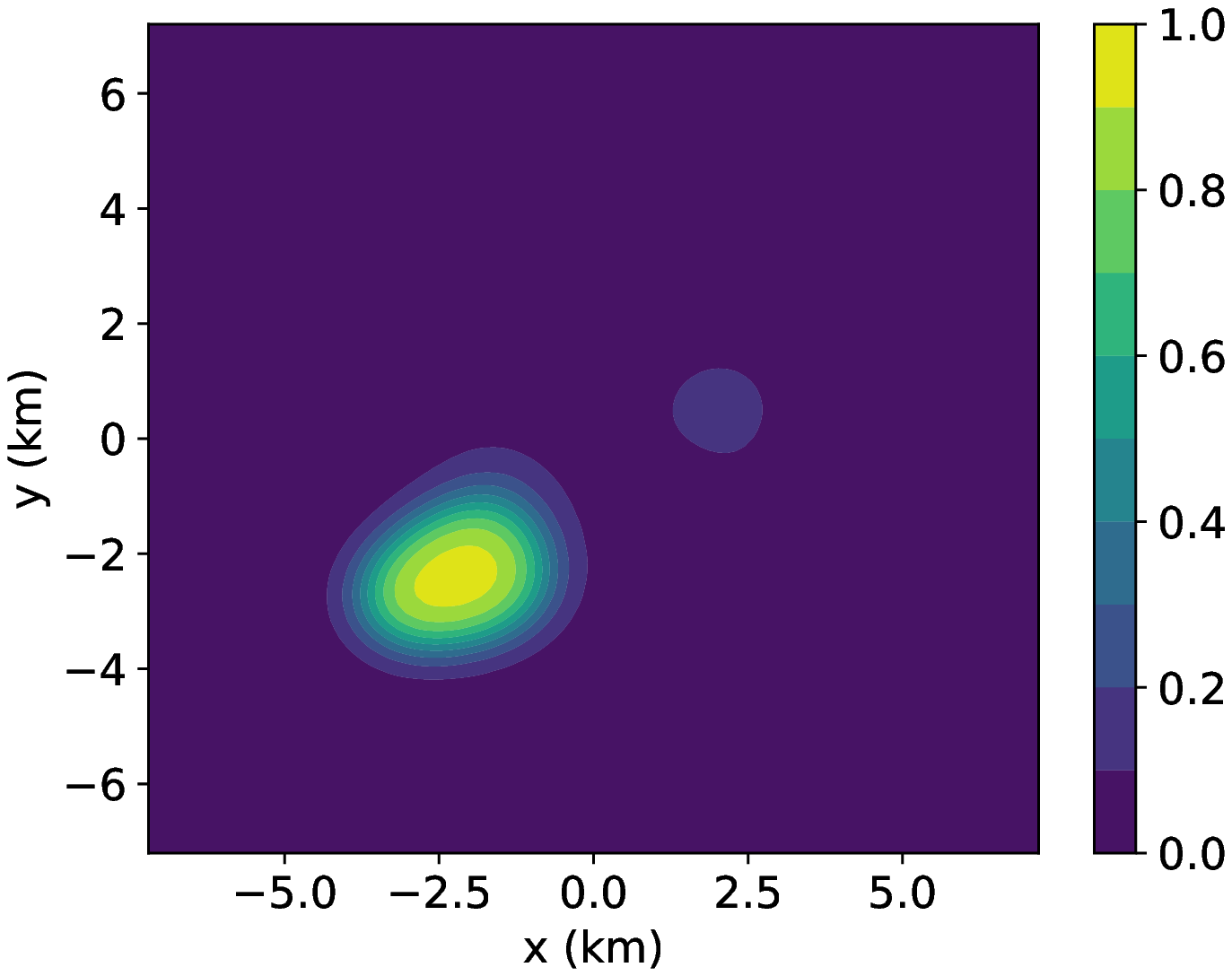}
\includegraphics[width=.33\textwidth]{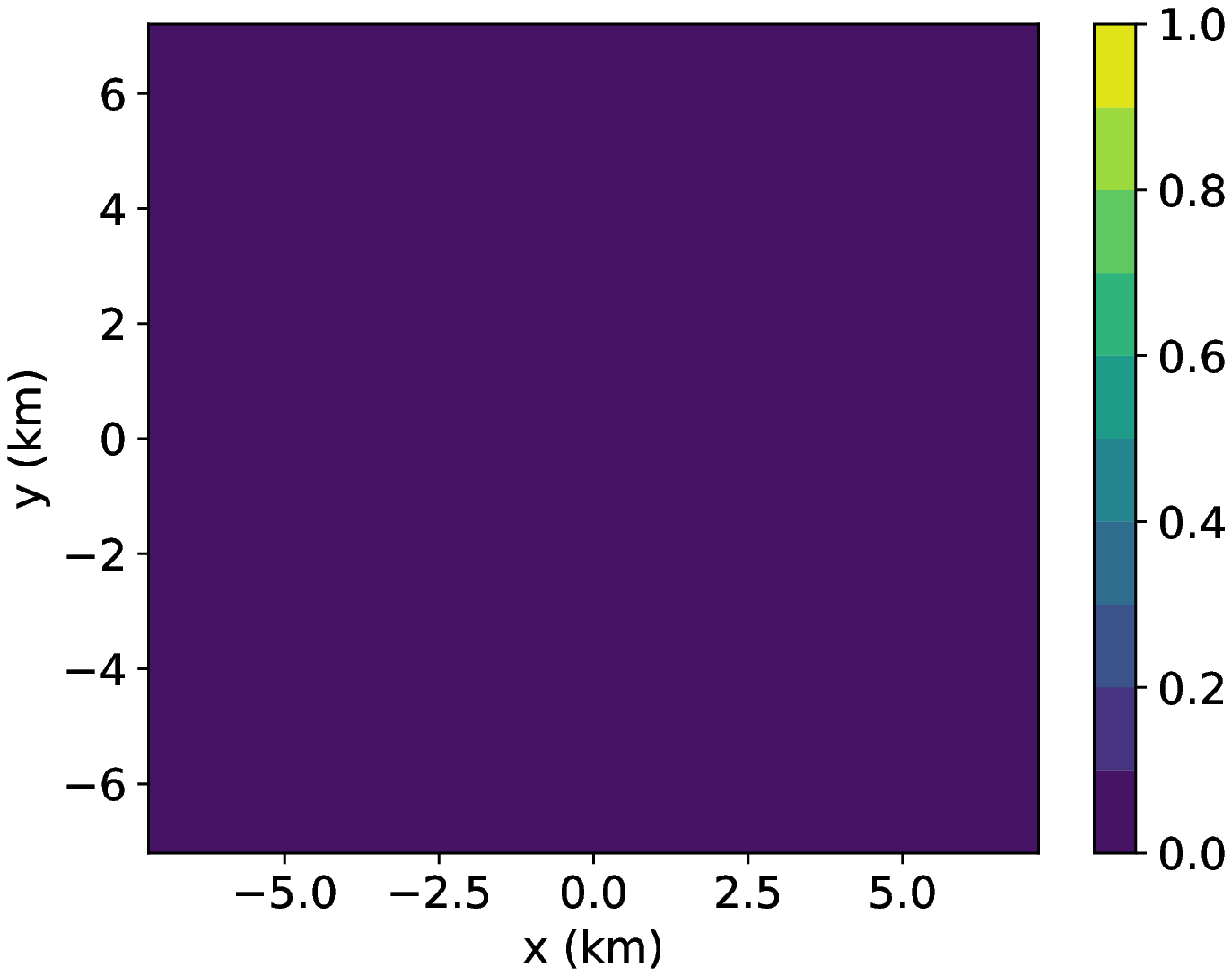}
\includegraphics[width=.33\textwidth]{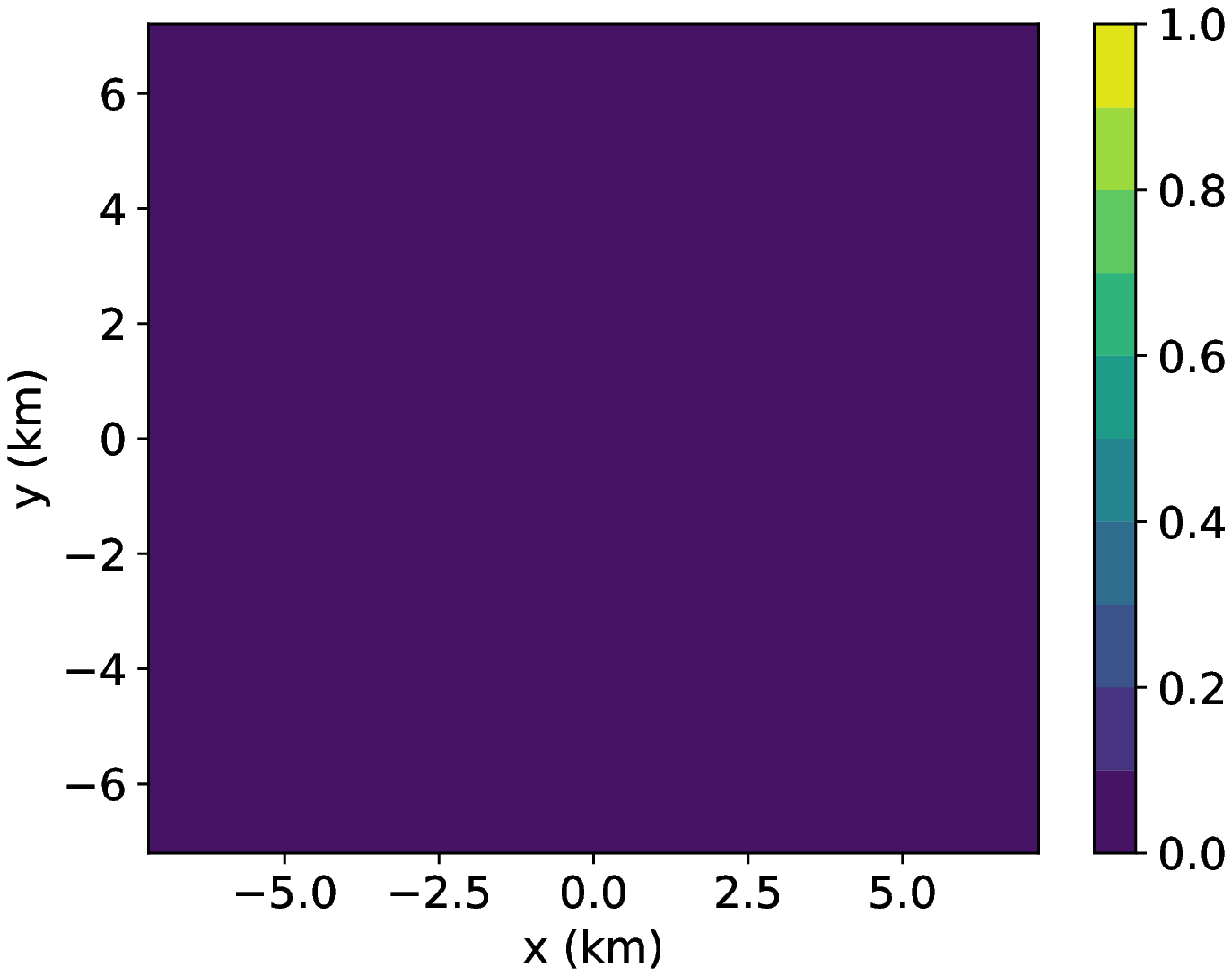}
\includegraphics[width=.33\textwidth]{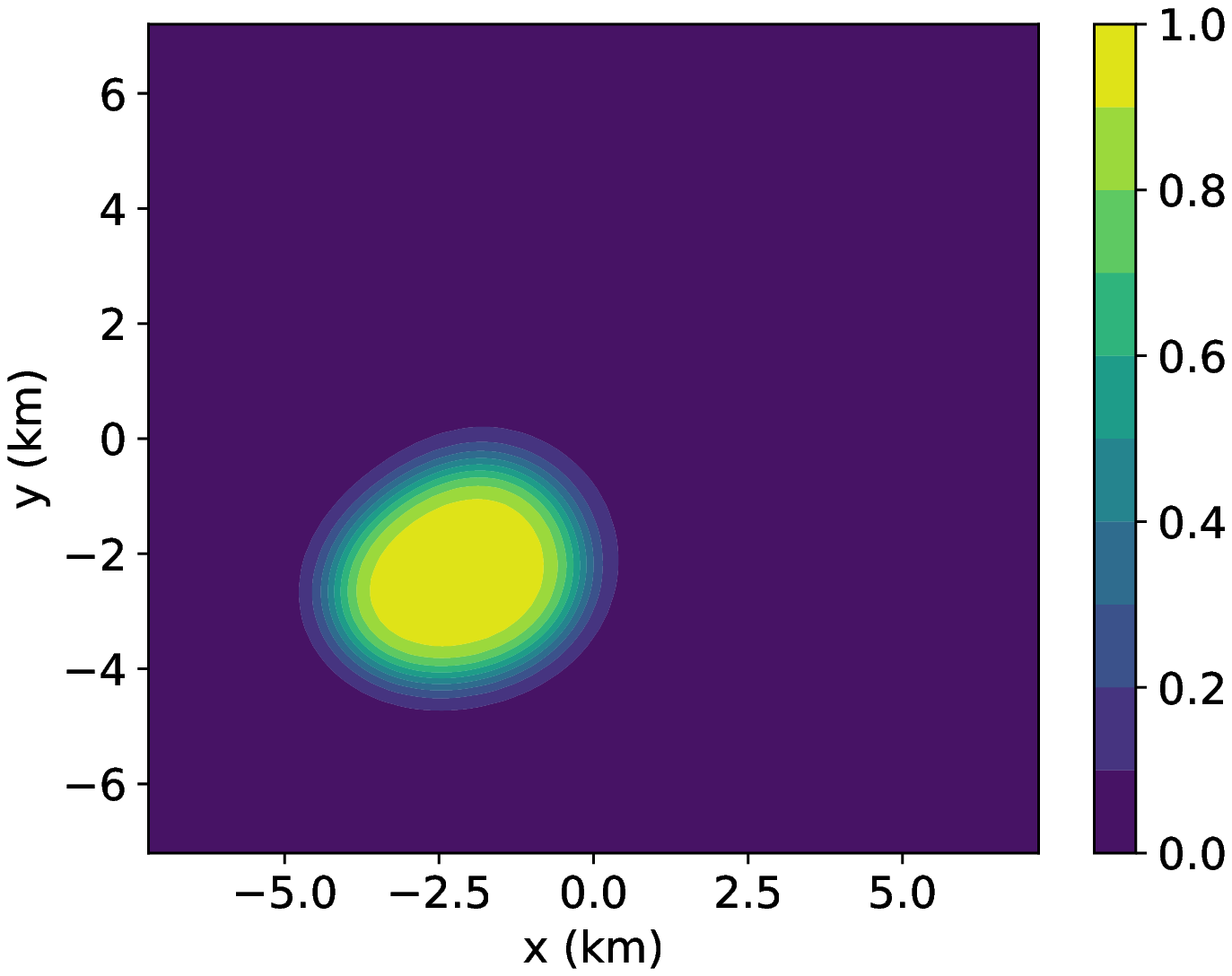}
\includegraphics[width=.33\textwidth]{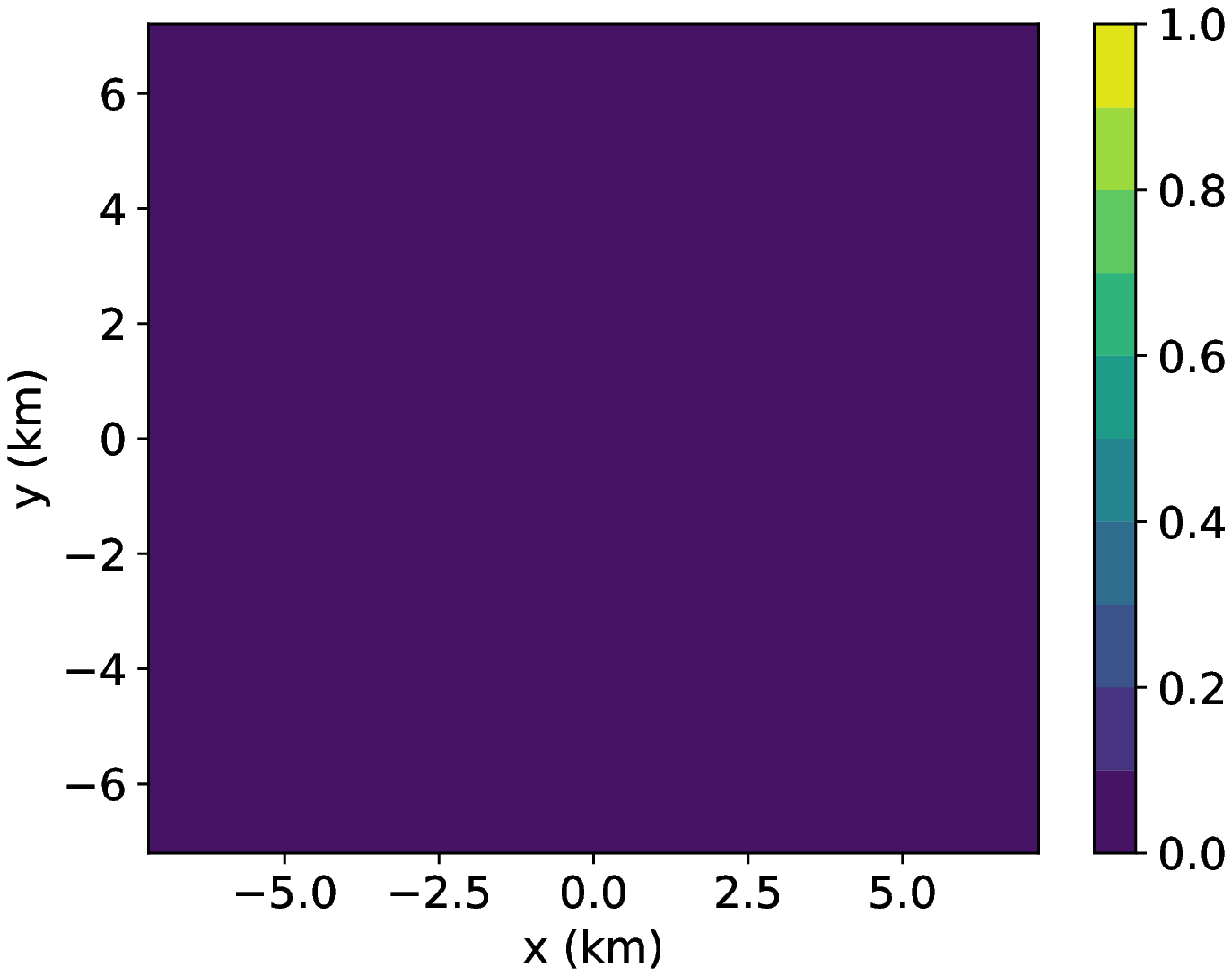}

\caption{Time dynamics with three different initial releases belonging to the set $RP_{50}^2(N)$ of \eqref{eq:rpset}, with $N/(N+N_0) = 0.75$. Integration is performed on the domain $[-L, L]$ with $L = 50 \textrm{km}$. The release box is plotted in dashed red on the first picture of each configuration. {\it Left}: Release box $[-2 L/3, 2 L/3]^2$. {\it Center}: Release box $[-L/2, L/2]^2$. {\it Right}: Release box $[-L/12.5, L/12.5]^2$. {\it From top to bottom}: increasing time $t \in \{0, 1, 25, 50, 75\}$, in days. The color indicates the value of $p$ (with the scale on the right).}
\label{fig:evolution}
\end{figure}

\section{Critical bubbles of non-extinction in dimension 1}
\label{sec:1D}

\subsection{Construction}

In this section, we consider the particular one dimensional case for which
we can construct a sharp critical bubble. 
To do so, we consider the following differential system:
\begin{equation}
\label{Palpha}
 \sigma u_{\alpha}'' + f(u_{\alpha}) = 0 \text{ in } \R_+,\quad  u_{\alpha}(0) = \alpha,\ u_{\alpha}'(0) = 0.
\end{equation}

\begin{proposition}\label{prop:ualpha}
System \eqref{Palpha} admits a unique maximal solution $u_{\alpha}$; 
it is global and can be extended by symmetry on $\R$ as a function of class $\mathcal{C}^2$. 
Moreover, if $\alpha > \theta_c$, then $L_\alpha$ defined in \eqref{def:lalpha} is finite and
$u_\alpha$ is monotonically decreasing on $\R_+$ and vanishes at $L_\alpha$.
\end{proposition}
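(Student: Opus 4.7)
The plan is to treat \eqref{Palpha} as a conservative Newton-type ODE and extract everything from its first integral. Since $f$ is $\mathcal{C}^1$, the Cauchy-Lipschitz theorem produces a unique maximal $\mathcal{C}^2$ solution $u_\alpha$, and multiplying the equation by $u_\alpha'$ then integrating gives the conserved energy
\[
\frac{\sigma}{2}(u_\alpha')^2 + F(u_\alpha) = F(\alpha).
\]
Symmetry is immediate: $x \mapsto u_\alpha(-x)$ solves the same Cauchy problem, so by uniqueness it coincides with $u_\alpha$ wherever both are defined. For global existence on $\R_+$ (and hence, by symmetry, a $\mathcal{C}^2$ extension to all of $\R$), the energy identity confines $u_\alpha$ to the sublevel set $\{F \leq F(\alpha)\}$ and controls $u_\alpha'$ in terms of $u_\alpha$, so no finite-time blow-up can occur and the maximal solution is defined on $\R$.

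Next, for $\alpha > \theta_c$, I would first observe that $\theta < \theta_c$ (which follows from $F(\theta) < 0 = F(\theta_c)$ combined with strict monotonicity of $F$ on $[\theta, 1]$), so $f(\alpha) > 0$ and $u_\alpha''(0) = -f(\alpha)/\sigma < 0$, which means $u_\alpha$ is strictly decreasing on a right-neighborhood of $0$. The key claim is that $F(u) < F(\alpha)$ strictly on $[0, \alpha)$: on $[\theta, \alpha)$ this comes from strict monotonicity of $F$, while on $[0, \theta]$ one has $F(u) \leq 0 < F(\alpha)$ since $F$ is strictly decreasing on $[0, \theta]$ with $F(0) = 0$. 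Plugging this back into the energy identity shows $(u_\alpha')^2 > 0$ whenever $u_\alpha(x) \in (0, \alpha)$, so $u_\alpha'$ cannot vanish and must remain negative. Consequently $u_\alpha$ decreases strictly as long as it stays positive; a limit $\ell \in (0, \alpha)$ as $x \to \infty$ is excluded because it would force $F(\ell) = F(\alpha)$, contradicting the strict inequality. Hence $u_\alpha$ reaches $0$ at some finite $L_\alpha > 0$.

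Finally, to identify $L_\alpha$ with formula \eqref{def:lalpha}, I would separate variables in $-u_\alpha' = \sqrt{(2/\sigma)(F(\alpha) - F(u_\alpha))}$ and integrate from $0$ to $L_\alpha$, producing exactly the stated expression. The resulting improper integral is finite: near $u = 0$ the integrand is bounded because $F(\alpha) > 0$, and near $u = \alpha$ a first-order Taylor expansion gives $F(\alpha) - F(u) \sim f(\alpha)(\alpha - u)$ with $f(\alpha) > 0$, leaving an integrable $1/\sqrt{\alpha - u}$ singularity. The main delicate point in the plan is the strict positivity of $F(\alpha) - F(u)$ on $[0, \alpha)$, which is exactly what keeps the trajectory from stalling before reaching $0$; the hypothesis $\alpha > \theta_c$ is precisely what guarantees this property, since for $\alpha \in (\theta, \theta_c]$ one would have $F(\alpha) \leq 0$, attained by $F$ again at some positive value below $\alpha$, and $u_\alpha$ would then be trapped in a periodic orbit rather than vanishing.
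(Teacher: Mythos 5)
Your proposal is correct and follows essentially the same route as the paper: Cauchy--Lipschitz for existence and uniqueness, the first integral $\tfrac{\sigma}{2}(u_\alpha')^2 = F(\alpha)-F(u_\alpha)$ to force strict monotone decrease while $u_\alpha\in(0,\alpha)$, then separation of variables and a Taylor expansion of $F$ at $\alpha$ (using $f(\alpha)>0$) to show the integral defining $L_\alpha$ converges. Your added details on symmetry, non-blow-up, and the phase-plane remark for $\alpha\le\theta_c$ are consistent with, and slightly more explicit than, the paper's argument.
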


\begin{definition}\label{def:valpha}
For $\alpha\in (\theta_c,1]$, we denote by an $\alpha$-bubble in one dimension the function $v_\alpha$
defined by $$v_\alpha(x)=u_\alpha(|x|)^+:=\max\{0, u_\alpha(|x|) \} \mbox{ .}$$
\end{definition}

From Proposition~\ref{prop:ualpha} and Definition~\ref{def:valpha} we have that
$v_\alpha$ is 
compactly supported with supp$(v_\alpha)=[-L_\alpha,L_\alpha]$.

\begin{proof}
Local existence is granted by Cauchy-Lipschitz theorem.
Then, we multiply Equation~\eqref{Palpha} by $u'_\alpha$,
\[
\frac{\sigma}{2} \big(  (u_{\alpha}')^2 \big)' + \big( F(u_{\alpha}) \big)' = 0,
\]
which implies (since $u_{\alpha}' (0) = 0, u_{\alpha}(0) = \alpha$ and the domain is connected) that:
\[
\frac{\sigma}{2} ( u_{\alpha}' )^2 = F(\alpha) - F(u_{\alpha}).
\]
Recall that $F(x) = \int_0^x f(y)dy$ is positive increasing from $\theta_c$. 
Hence, for $\alpha > \theta_c$, $u_{\alpha}$ stays strictly below $\alpha$ except at $0$; $u_{\alpha}'$ cannot vanish unless $u_{\alpha}= \alpha$. 
Hence, $u_{\alpha}$ is decreasing on $\R_+$.

Because $u_{\alpha}$ is decreasing, its derivative is negative and thus:
\begin{equation}\label{dualpha}
\sqrt{\sigma} \displaystyle\frac{du_{\alpha}}{dx} =  - \sqrt{2 (F (\alpha) - F(u_{\alpha}) )}.
\end{equation}
Then, $u_{\alpha}$, being monotone, is invertible on its range. Let us define $\chi_{\alpha}(u_{\alpha}(x))=x$, so that $u_{\alpha}(\chi_{\alpha}(\omega))=\omega$.
By the chain rule, we have
\[
\displaystyle\frac{d \chi_{\alpha}}{d \omega} = - \sqrt{\frac{\sigma}{2 (F (\alpha) - F(\omega) )}},
\]
so that,
\begin{equation}\label{chialpha}
\chi_{\alpha}(\omega) = \int_{\omega}^\alpha \sqrt{\frac{\sigma}{2 (F (\alpha) - F(v) )}}dv.
\end{equation}
Thus the function $\chi_{\alpha}$ evaluated at $\omega$ is equal to the unique radius at which the solution of \eqref{Palpha} takes the value $\omega$. 

Moreover, if $\alpha > \theta_c$, $F(\alpha) - F(v)$ vanishes if and only if $v=\alpha$. Therefore, if $v = \alpha - h$,
we can write $F (\alpha - h ) = F(\alpha) - h f(\alpha) + O (h^2)$, which means that locally:
\[
\frac{1}{\sqrt{ F (\alpha) - F(v)} } \underset{v \to \alpha}{\sim} \frac{1}{\sqrt{f(\alpha)}}\frac{1}{\sqrt{\alpha - v}},
\]
which is integrable as long as $f(\alpha) \not= 0$, which is true since $\alpha \in (\theta_c, 1) \subset (\theta, 1)$.

On the other hand, the integral diverges for $\alpha = \theta_c$ and $\omega = 0$. 
Indeed, saying that the integrand stays controllable at $v = \alpha = \theta_c$ 
is equivalent to the same statement for $\frac{1}{\sqrt{f(\theta_c)}}\frac{1}{\sqrt{\theta_c - v}}$. 
But then, at the other side $v = 0$ we get (recall that $F(\theta_c) = 0 = F(0)$):
\[
\frac{1}{\sqrt{- F(v)} } \underset{v\to 0^+}{\sim} \frac{1}{v} \sqrt{- \frac{2}{f'(0)}},
\]
which is not integrable. (Assuming $f'(0) < 0$ for convenience.)
\qed
\end{proof}

\begin{proposition}
 The limit bubble $u_{\theta_c}$ (also known as the ``ground state'') has exponential decay at infinity.
 \label{expodecay}
\end{proposition}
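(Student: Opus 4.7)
The plan is to use the conserved energy identity $\frac{\sigma}{2}(u_{\theta_c}')^2 = F(\theta_c) - F(u_{\theta_c}) = -F(u_{\theta_c})$ (recall $F(\theta_c)=0$ by definition of $\theta_c$) together with the local behaviour of $F$ near $0$ to derive $(\ln u_{\theta_c})'(x) \to -\sqrt{-f'(0)/\sigma}$ as $x\to+\infty$, which will give exponential decay by integration.

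First I would verify that $u_{\theta_c}$ is a globally defined, positive, strictly decreasing function on $\R_+$ tending to $0$ at infinity. Existence, monotonicity and positivity follow exactly as in the proof of Proposition \ref{prop:ualpha}; the only new point is that the support is unbounded, which is precisely the content of the last paragraph of that proof (the integral in \eqref{chialpha} diverges at $v=0$ for $\alpha=\theta_c$). Being bounded, positive and decreasing, $u_{\theta_c}$ admits a limit $\ell\in[0,\theta_c)$ at infinity, and the energy identity forces $(u_{\theta_c}')^2 \to -2F(\ell)/\sigma$. Since $F<0$ on $(0,\theta_c)$ and $F(0)=0$, a nonzero limit would make $u_{\theta_c}'$ bounded away from $0$, contradicting boundedness; hence $\ell=0$.

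Next I would exploit the expansion of $F$ near $0$. Using $f(0)=0$ and $f'(0)<0$, Taylor expansion gives
\[
 -F(u) = \frac{|f'(0)|}{2}\,u^2\,\bigl(1 + O(u)\bigr)\qquad\text{as } u\to 0^+.
\]
Combining with the energy identity and $u_{\theta_c}'<0$,
\[
 \frac{u_{\theta_c}'(x)}{u_{\theta_c}(x)} = -\sqrt{\tfrac{|f'(0)|}{\sigma}}\,\sqrt{1+O(u_{\theta_c}(x))} \xrightarrow[x\to+\infty]{} -\lambda,\qquad \lambda := \sqrt{|f'(0)|/\sigma}.
\]
Finally, given any $\varepsilon>0$ one picks $x_0$ large enough that $-\lambda(1+\varepsilon)\leq (\ln u_{\theta_c})'(x)\leq -\lambda(1-\varepsilon)$ for $x\geq x_0$, and integration yields
\[
 C_1\,e^{-\lambda(1+\varepsilon)(x-x_0)} \leq u_{\theta_c}(x) \leq C_2\,e^{-\lambda(1-\varepsilon)(x-x_0)},\qquad x\geq x_0,
\]
which is exponential decay at rate arbitrarily close to $\lambda$.

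The argument has no real obstacle; the only care needed is the bookkeeping of the remainder in Taylor's formula and the justification of taking logarithmic derivatives, both of which are unproblematic because $u_{\theta_c}$ remains strictly positive on $\R_+$. One could alternatively invoke the stable-manifold theorem applied to the planar system $(u,u')'=(u',-f(u)/\sigma)$ at the hyperbolic equilibrium $(0,0)$, which directly identifies $u_{\theta_c}$ with a trajectory on the stable manifold and yields the same decay rate $\lambda$; the elementary energy-based route above seems preferable since it is self-contained and reuses the identities already established in Proposition \ref{prop:ualpha}.
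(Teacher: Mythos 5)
Your argument is correct and follows essentially the same route as the paper's own proof: the first-integral identity $\frac{\sigma}{2}(u_{\theta_c}')^2=-F(u_{\theta_c})$, the expansion $\sqrt{-2F(\epsilon)}=\epsilon\sqrt{-f'(0)}+o(\epsilon)$, and comparison with the linear equation $y'=-\sqrt{-f'(0)/\sigma}\,y$. Your version is in fact more careful than the paper's (which asserts the asymptotic equivalence without the $\varepsilon$-bookkeeping and is slightly sloppy about the factor $\sqrt{\sigma}$ in the decay rate), so there is nothing to correct.
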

\begin{proof}
 The function $u_{\theta_c}$ satisfies the following equation:
 \[
  \f{\sigma}{2} (u'_{\theta_c})^2 = F(\theta_c) - F(u_{\theta_c}) = - F (u_{\theta_c}).
 \]
  Hence, 
  \[
   \sqrt{\sigma} u'_{\theta_c} = - \sqrt{- 2 F (u_{\theta_c})} \text{ on } \R_+.
  \]
  Moreover, for small $\epsilon$, $\sqrt{-2 F(\epsilon)} = \epsilon \sqrt{-f'(0)} + o (\epsilon)$.
  
  As a consequence, as $u_{\theta_c}$ gets small (at infinity), it is equivalent to the solution of
  \[
   y' = - \sqrt{-f'(0)} y,
  \]
  that is $x \mapsto e^{-\sqrt{-f'(0)} x}$.
  \qed
\end{proof}

{\it Proof of Theorem \ref{thm:invasion} in dimension d=1.}
Let $\alpha\in(\theta_c,1]$, and let us assume that the initial data for system 
\eqref{AllenCahn} satisfies $p(0,\cdot)\geq v_\alpha$ where $v_\alpha$ is the $\alpha$-bubble
defined in Definition \ref{def:valpha}. From Proposition \ref{prop:ualpha},
it suffices to prove that $p(t,\cdot)\to 1$ locally uniformly on $\R$ as $t\to +\infty$.

We first notice that the $\alpha$-bubble $v_\alpha$ is a sub-solution for \eqref{AllenCahn}.
Indeed it is the minimum between the two sub-solutions $0$ and $u_\alpha$. Therefore, by the
comparison principle, if $p(0,\cdot)\geq v_\alpha$, then for all $t>0$, $p(t,\cdot)\geq v_\alpha$.

Then, the proof follows from the ``sharp threshold phenomenon'' for bistable equations, 
as exposed for example in \cite[Theorem 1.3]{DuMat.Convergence}, which we recall below:
\begin{theorem}\cite[Theorem 1.3]{DuMat.Convergence}
Let $\phi_\lambda$, $\lambda>0$ be a family of $L^\infty(\R)$ nonnegative, compactly supported initial data
such that \\
(i) $\lambda\mapsto \phi_\lambda$ is continuous from $\R^+$ to $L^1(\R)$;\\
(ii) if $0<\lambda_1<\lambda_2$ then $\phi_{\lambda_1}\leq \phi_{\lambda_2}$ and 
$\phi_{\lambda_1}\neq \phi_{\lambda_2}$; \\
(iii) $\lim_{\lambda\to 0} \phi_\lambda(x)=0$ a.e. in $\R$.

Let $p_\lambda$ be the solution to \eqref{AllenCahn} with initial data $p_\lambda(0,\cdot)=\phi_\lambda$. 
Then, one of the following alternative holds:\\
(a) $\lim_{t\to \infty} p_\lambda(t,x) = 0$ uniformly in $\R$ for every $\lambda>0$; \\
(b) there exists $\lambda^* \geq 0$ and $x_0 \in \R$ such that 
$$
\lim_{t\to \infty} p_\lambda(t,x) = \left\{ \begin{array}{lll}
0 \qquad &\mbox{ uniformly in }\R  \qquad &(0 \leq \lambda < \lambda^*),  \\
u_{\theta_c}(x-x_0) \qquad &\mbox{ uniformly in }\R  \qquad &(\lambda = \lambda^*),  \\
1 \qquad &\mbox{ locally uniformly in }\R  \qquad &(\lambda > \lambda^*).
\end{array}\right.
$$
\end{theorem}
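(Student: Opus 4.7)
The strategy is to exploit the monotone dependence of $p_\lambda$ on $\lambda$ given by the parabolic comparison principle, together with the gradient structure provided by the energy functional from Section~2.4, in order to force a dichotomy between extinction and invasion, with the ground state appearing only as a knife-edge threshold. I would start by setting
\[
\Lambda_e := \{\lambda > 0 : p_\lambda(t,\cdot) \to 0 \text{ uniformly on } \R\}, \qquad \Lambda_i := \{\lambda > 0 : p_\lambda(t,\cdot) \to 1 \text{ locally uniformly}\}.
\]
Hypotheses (ii)--(iii) together with the comparison principle imply that $\Lambda_e$ is a left-interval containing a neighborhood of $0$ (since for small $\lambda$ one has $\phi_\lambda < \theta$ pointwise, so Proposition~\ref{prop:extinction} applies) and $\Lambda_i$ is a right-interval. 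Alternative (a) then corresponds to $\Lambda_i = \emptyset$; in that case a standard energy argument, combined with parabolic regularity, shows that every solution must decay to $0$ uniformly.

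Next, I would show that both $\Lambda_e$ and $\Lambda_i$ are open in $(0, \infty)$. For $\Lambda_i$: if $p_{\lambda_0}(t,\cdot) \to 1$ locally uniformly, then at some finite time $T$ one has $p_{\lambda_0}(T,\cdot) \geq v_\alpha$ for some $\alpha > \theta_c$. By the $L^1$-continuity of $\lambda \mapsto \phi_\lambda$ and parabolic regularity, this pointwise bound persists on a $\lambda$-neighborhood of $\lambda_0$, and Theorem~\ref{thm:invasion} forces invasion for all such $\lambda$. For $\Lambda_e$: if $p_{\lambda_0}(t,\cdot) \to 0$ uniformly, then eventually $p_{\lambda_0}(T,\cdot) \leq \theta/2$ on $\R$, and Proposition~\ref{prop:extinction} plus continuity of the semiflow yield the corresponding open neighborhood. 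Setting $\lambda^* := \sup \Lambda_e$, openness forces $\lambda^* \notin \Lambda_e \cup \Lambda_i$ whenever $\Lambda_i \neq \emptyset$.

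The third step is to identify the limit at $\lambda = \lambda^*$. The orbit $\{p_{\lambda^*}(t,\cdot)\}_{t \geq 0}$ is precompact in $L^\infty_{\mathrm{loc}}(\R)$ by standard parabolic estimates, and the energy $E[p_{\lambda^*}(t,\cdot)]$ is non-increasing in $t$; hence every element of its $\omega$-limit set is a bounded nonnegative stationary solution of $\sigma v'' + f(v) = 0$. A phase-plane analysis of this Hamiltonian ODE, based on the conserved quantity $\tfrac{\sigma}{2}(v')^2 + F(v)$ and on the relation $F(\theta_c) = 0$, shows that the only bounded nonconstant nonnegative orbit whose trajectory returns to the $v$-axis at the origin is the homoclinic loop at energy $0$, which is (up to translation) the ground state $u_{\theta_c}$. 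Since $0$ cannot lie in the $\omega$-limit set (else $\lambda^* \in \Lambda_e$) and neither can $1$ (else $\lambda^* \in \Lambda_i$), the $\omega$-limit set must be contained in the one-parameter family $\{u_{\theta_c}(\cdot - x_0) : x_0 \in \R\}$.

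The main obstacle is to upgrade this ``inclusion in a family'' to convergence to a single translate $u_{\theta_c}(\cdot - x_0)$. The standard and essentially unavoidable tool here is Matano's zero-number (intersection number) lemma: for each fixed $\bar{x}$, the number of sign changes of $p_{\lambda^*}(t,\cdot) - u_{\theta_c}(\cdot - \bar{x})$ is nonincreasing in $t$ and drops strictly at any multiple zero; combined with the reflectional symmetry of $u_{\theta_c}$ and the connectedness of the $\omega$-limit set, this pins down a unique $x_0$. This delicate uniqueness step, which is the true analytic heart of the Du--Matano argument and the reason why the one-dimensional setting is crucial, is where I would expect the principal technical difficulty to lie.
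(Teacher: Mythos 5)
First, a point of order: the paper does not prove this statement at all. It is quoted verbatim, with citation, as Theorem~1.3 of Du and Matano \cite{DuMat.Convergence} and used as a black box in the one-dimensional proof of Theorem~\ref{thm:invasion}. There is therefore no internal proof to compare yours against; I can only measure your sketch against the argument in the cited reference.

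Your outline captures the correct global structure (the ordered open sets $\Lambda_e$, $\Lambda_i$, classification of the $\omega$-limit set, zero-number lemma to pin down a single translate), but it contains one genuine gap and one flawed step. The gap: you analyse only the single value $\lambda^*=\sup\Lambda_e$, whereas the theorem asserts that the complement $(0,\infty)\setminus(\Lambda_e\cup\Lambda_i)$ is exactly the singleton $\{\lambda^*\}$. A priori this complement is a closed interval $[\lambda^*,b]$ with $b>\lambda^*$, on every point of which the solution converges to some translate of the ground state; nothing in your argument excludes this, and without excluding it the trichotomy in (b) is not proved. Ruling it out is the actual ``sharpness'' content of Du--Matano: one first shows, via the strong maximum principle and the fact that two distinct translates of a positive bump decaying at $\pm\infty$ are never ordered, that all these putative limits would be the \emph{same} translate $u_{\theta_c}(\cdot-x_0)$, and then uses the linear instability of the ground state (its linearization admits a positive principal eigenvalue with positive eigenfunction) to show that two strictly ordered solutions cannot both converge to it. The flawed step: you deduce $\phi_\lambda<\theta$ pointwise for small $\lambda$ from hypotheses (ii)--(iii), but monotone a.e.\ convergence to $0$ gives only $L^1$ smallness, not an $L^\infty$ bound (consider $\phi_\lambda=\mathbb{1}_{[0,\lambda]}$); the correct fix is to run the equation for a unit time and use the $L^1\to L^\infty$ smoothing of the heat semigroup to push the solution below $\theta$ before invoking extinction. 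The remaining steps (precompactness of the orbit, localization of the threshold solution, exclusion of the constants $0$, $\theta$, $1$ from the $\omega$-limit set) are standard but, as you suspect, rest on the zero-number machinery rather than on the energy functional alone.
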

In our case, we define $\phi_\lambda(x)=v_\alpha(\f{x}{\lambda})$ for $\lambda>0$.
We have $\phi_1=v_\alpha$. Since $v_\alpha$ is a sub-solution to \eqref{AllenCahn},
the solution to this equation with initial data $\phi_1$ stays above $v_\alpha$ for all positive time.
From the alternative in the above Theorem, we deduce that the solution to \eqref{AllenCahn}
with initial data $v_\alpha$ converges to $1$ as time goes to $+\infty$ locally uniformly on $\R$. 
(Indeed, the ground state $u_{\theta_c}$ is bounded from above by $\theta_c < \alpha$.)
By the comparison principle, we conclude that if $p(0,\cdot) \geq v_\alpha$, then
$\lim_{t\to+\infty} p(t,\cdot) = 1$ locally uniformly as $t\to+\infty$.
\qed

\subsection{Comparison of the energy and critical bubble methods}

Our construction of a critical $\alpha$-bubble, inspired by \cite{BarTur.Spatial}, 
holds in dimension $1$. 
In this context we may compare the ``minimal invasion radius'' at level $\alpha$ 
for initial data, given by the two sufficient conditions:
 being above an $\alpha$-bubble (which is the maximum of two stationary solutions), 
 or being above an initial condition with negative energy.

\begin{figure}[h!]
 \includegraphics[width=\textwidth]{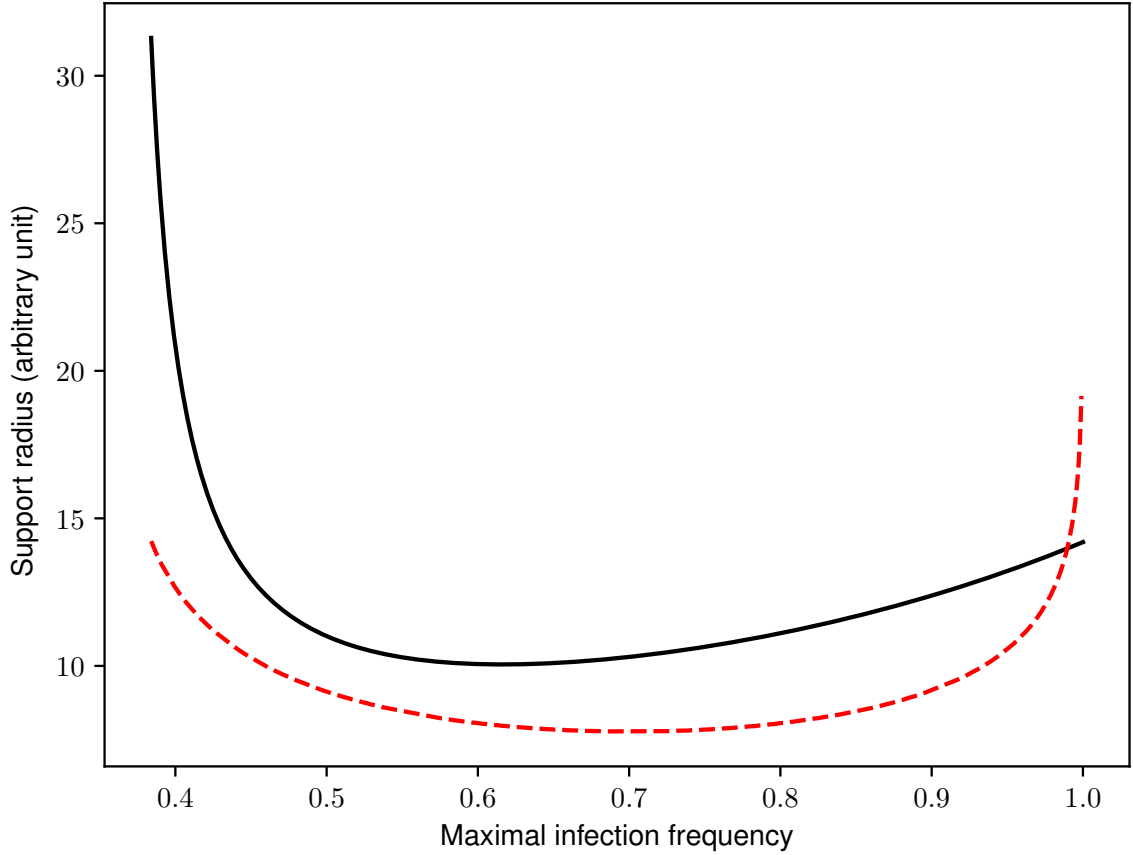}
 \caption{Comparison of minimal invasion radii $R_{\alpha}$ (obtained by energy) in dashed line and $L_{\alpha}$ (obtained by critical bubbles) 
 in solid line, varying with the maximal infection frequency level $\alpha$. The scale is such that $\sigma=1$.}
 \label{fig:comp}
\end{figure}

We first compute the energy of the critical $\alpha$-bubble $v_\alpha$ of  
Definition~\ref{def:valpha},
$$
E[v_\alpha] = \int_{\R} \left(\frac{\sigma}{2} |v_\alpha'|^2 - F(v_\alpha) \right)\,dx.
$$
From Equation~\eqref{Palpha}, we have
$$
E[v_\alpha] = \int_{-L_\alpha}^{L_\alpha} \big( \sigma |v_\alpha'|^2 - F(\alpha)\big) dx
= 2 \int_0^{L_\alpha} \sigma |v_\alpha'|^2 \,dx - 2 L_\alpha F(\alpha).
$$
Performing the change of variable $v=v_\alpha(x)$ we have
$$
\int_0^{L_\alpha} |v_\alpha'|^2 dx = \int^\alpha_0 v_\alpha'(v_\alpha^{-1}(v)) \,dv 
= \frac{1}{\sqrt{\sigma}} \int_0^\alpha \sqrt{2(F(\alpha)-F(v))} dv,
$$
where we use Equation~\eqref{dualpha} for the last equality.
Finally, using the expression of $L_\alpha$ in \eqref{def:lalpha} we arrive at
$$
E[v_\alpha] = 2\sqrt{\sigma} \int_0^\alpha \frac{F(\alpha)-2 F(v)}{\sqrt{2(F(\alpha)-F(v))}}\,dv.
$$

To emphasize the difference between the two sufficient conditions, we observe that
when $\alpha\to \theta_c$, since $F(\theta_c)=0$, we obtain
$$
E[v_{\theta_c}] = 2\sqrt{\sigma} \int_0^{\theta_c} \sqrt{-2F(v)}\,dv.
$$

\begin{lemma}
 The $\alpha$-bubbles $v_{\alpha}$ have positive energy if $\alpha$ is close to $\theta_c$.
\end{lemma}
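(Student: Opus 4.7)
The plan is to read off the sign of $E[v_\alpha]$ at the endpoint $\alpha = \theta_c$ from the explicit formula derived just above the statement, and then propagate that sign to a right-neighborhood of $\theta_c$ by continuity. First I would note that
\[
E[v_{\theta_c}] \;=\; 2\sqrt{\sigma}\int_0^{\theta_c} \sqrt{-2 F(v)}\,dv
\]
is strictly positive: by the bistability assumption \eqref{hyp:bistab}, $F$ decreases strictly on $(0,\theta)$ from $F(0)=0$ and increases strictly on $(\theta,\theta_c)$ up to $F(\theta_c)=0$, so $F < 0$ on the whole open interval $(0,\theta_c)$ and the integrand is strictly positive there.

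Next I would establish right-continuity of $\alpha \mapsto E[v_\alpha]$ at $\theta_c$. After the rescaling $v = \alpha s$,
\[
E[v_\alpha] \;=\; 2\sqrt{\sigma}\,\alpha \int_0^1 \frac{F(\alpha) - 2F(\alpha s)}{\sqrt{2\bigl(F(\alpha) - F(\alpha s)\bigr)}}\,ds,
\]
and the integrand converges pointwise to its value at $\alpha = \theta_c$ for each $s \in [0,1)$. The hard part is to produce an integrable dominator near the singularity at $s = 1$: since $\theta_c > \theta$, continuity of $f$ gives a uniform lower bound $f \geq c > 0$ on a strip $[\theta_c(1-\delta), \theta_c + \eta]$, so that $F(\alpha) - F(\alpha s) \geq c\,\alpha\,(1-s)$ for $s \in [1-\delta, 1]$ and $\alpha \in [\theta_c, \theta_c+\eta]$. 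Combined with the trivial bound $|F(\alpha) - 2F(\alpha s)| \leq |F(\alpha)| + 2|F(\alpha) - F(\alpha s)|$, this produces a majorant of order $1/\sqrt{1-s}$, and dominated convergence then yields $E[v_\alpha] \to E[v_{\theta_c}]$ as $\alpha \to \theta_c^+$.

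Putting the two steps together, $E[v_\alpha] > 0$ for all $\alpha$ sufficiently close to $\theta_c$, which is the claim. The main obstacle is precisely the integrable square-root singularity of the integrand at $v = \alpha$ when $\alpha > \theta_c$; it is controlled by the uniform positive lower bound on $f$ near $\theta_c$, which in turn rests on the elementary observation that $\theta_c$ lies strictly to the right of $\theta$.
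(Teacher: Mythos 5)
Your proposal is correct and follows the same route as the paper: the paper's proof is literally "this follows from continuity of $\alpha \mapsto E[v_\alpha]$," relying on the displayed formula $E[v_{\theta_c}] = 2\sqrt{\sigma}\int_0^{\theta_c}\sqrt{-2F(v)}\,dv > 0$ derived just above the lemma. You simply fill in the two details the paper leaves implicit — why $F<0$ on $(0,\theta_c)$ makes the limit value strictly positive, and why the map is continuous at $\theta_c^+$ (dominated convergence, with the square-root singularity at $v=\alpha$ controlled by $f\geq c>0$ near $\theta_c>\theta$) — and both are handled correctly.
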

\begin{proof}
 This follows from continuity of $\alpha \mapsto E[v_{\alpha}]$.
 \qed
\end{proof}

\begin{remark}
In particular, the energy estimate alone does not imply invasiveness of the $\alpha$-bubbles, which justifies the interest of our particular approach in one dimension.
We do not claim that the ``energy'' or the ``bubble'' method is better, 
but we highlight the fact that they do not perfectly overlap.
\end{remark} 
Figure \ref{fig:comp} gives a numerical illustration of the fact that $\alpha$-bubbles give smaller radii at level $\alpha$, except for $\alpha \sim 1$,
and at any rate provide a smaller minimal radius for invasion
when the same parameters as in Figure \ref{fig:reacprofiles} are used.

\section{Specific study of a relevant set of release profiles}
\label{sec:4}

In this section we discuss a specific release protocol, with a total of $N$ mosquitoes divided equally into $k$ locations, in a space of dimension $1$.
It yields a release profile in the set $RP_k^d (N)$ we defined in \eqref{eq:rpset}.

\subsection{Analytical study of the case of a single release}

In the case of a single release ($k=1$), we can easily describe the relationship between 
the mosquito diffusivity $\sigma$ and the total number of mosquitoes to release.
Morally, as long as the mosquitoes diffuse they could theoretically invade (in dimension $1$) by a single release, by introducing a sufficiently large amount of mosquitoes.
This is the object of the next proposition:
\begin{proposition}
Let $p_{\sigma} (\tau) := \f{N G_{\sigma} (\tau)}{N G_{\sigma} (\tau) + N_0}$ be the proportion of released mosquitoes right after introduction (at $t=0^+$), 
where $G_\sigma(\tau):=G_{\sigma,1}(\tau) = \frac{1}{\sqrt{2\pi \sigma}} e^{-\tau^2/2\sigma}$.
 \begin{itemize}
  \item[(i)] If $N \in (0, + \infty)$ is fixed, then there exists a range of values for the diffusivity 
  $S(N) = (0, \sigma_+ (N)]$ such that $\sigma \in S(N)$ if, and only if,
  there exists $\alpha \in (\theta_c, 1]$ such that $p_{\sigma} (\tau) \geq u_{\alpha} (\tau)$ for all $\tau \in (0, \alpha)$. 
  Moreover, $S(N)$ is increasing (with respect to inclusion).
  \item[(ii)] If there exists $\sigma_+$ such that $\sigma \in (0, \sigma_+]$ then there exists $N_m(\sigma_+) \in \R_+$ such that
  if $N \geq N_m (\sigma_+)$ then there exists $\alpha \in (\theta_c, 1]$ such that $p_{\sigma} (\tau) \geq u_{\alpha} (\tau)$ for $\tau \in [0, \alpha]$.
 \end{itemize}
In both cases, evolution in \eqref{AllenCahn} with initial data $p_{\sigma}$ yields invasion by the introduced population.
\label{prop:singleR}
\end{proposition}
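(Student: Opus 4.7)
The plan is to reduce both parts to a single scaling-invariant comparison. Setting $y=\tau/\sqrt{\sigma}$, the ODE \eqref{Palpha} shows that $\tilde u_\alpha(y):=u_\alpha(\sqrt{\sigma}\,y)$ is the $\sigma=1$ critical bubble, compactly supported on $[-\ell_\alpha,\ell_\alpha]$ with $\ell_\alpha := L_\alpha/\sqrt{\sigma}$ depending only on $\alpha$ and $f$. Since also $G_\sigma(\tau)=G_1(y)/\sqrt{\sigma}$, the comparison $p_\sigma(\tau)\geq u_\alpha(\tau)$ on the support of $v_\alpha$ (which by symmetry reduces to $[0,L_\alpha]$; I read the $(0,\alpha)$ in the statement as a typo for this interval) rewrites, after clearing denominators, as
\begin{equation*}
\frac{N}{N_0\sqrt{\sigma}} \;\geq\; M(\alpha) \;:=\; \sup_{y\in[0,\ell_\alpha]} \frac{\tilde u_\alpha(y)}{G_1(y)\bigl(1-\tilde u_\alpha(y)\bigr)}.
\end{equation*}
The crucial point is that $M(\alpha)$ depends only on $\alpha$ and the reaction $f$, not on $\sigma$ or $N$.

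I would then establish that $\alpha\mapsto M(\alpha)$ is continuous on $(\theta_c,1)$, finite there, and blows up at both endpoints. Finiteness on $(\theta_c,1)$ follows because the integrand is continuous on the compact interval $[0,\ell_\alpha]$, with $\tilde u_\alpha(\ell_\alpha)=0$ so no issue at the edge and $\tilde u_\alpha(0)=\alpha<1$ so no blowup at the peak. The blowup as $\alpha\to 1$ comes from the factor $1/(1-\tilde u_\alpha(0))$. The blowup as $\alpha\to\theta_c^+$ uses that $\ell_\alpha\to\infty$ (by the divergence argument at the end of the proof of Proposition~\ref{prop:ualpha}) while the ground state decays only exponentially, as $e^{-\sqrt{-f'(0)}\,y}$ by Proposition~\ref{expodecay}, much slower than the Gaussian's $e^{-y^2/2}$. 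Continuity follows from continuous dependence of solutions of \eqref{Palpha} on the initial value $\alpha$. Hence $M^*:=\inf_{\alpha>\theta_c}M(\alpha)$ is strictly positive and attained at some $\alpha^*\in(\theta_c,1)$.

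With these facts, both parts follow immediately. For (i), fixing $N$, the admissible diffusivities at a given $\alpha$ form the interval $\bigl(0,(N/(N_0M(\alpha)))^2\bigr]$; the union over $\alpha\in(\theta_c,1]$ is $S(N)=(0,\sigma_+(N)]$ with $\sigma_+(N)=(N/(N_0M^*))^2$, clearly increasing in $N$. For (ii), any $\sigma\in(0,\sigma_+]$ satisfies $N_0\sqrt{\sigma}\,M(\alpha)\leq N_0\sqrt{\sigma_+}\,M(\alpha)$, so the threshold $N_m(\sigma_+):=N_0\sqrt{\sigma_+}\,M^*$ suffices. Invasion under \eqref{AllenCahn} then follows directly from Theorem~\ref{thm:invasion} applied to $v_{\alpha^*}=u_{\alpha^*}^+$.

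The main obstacle is the asymptotic behavior of $M$ near $\alpha=\theta_c^+$: one must carefully compare the exponential tail of the ground state on the widening support $[0,\ell_\alpha]$ with the Gaussian's super-exponential decay, ensuring the ratio diverges rather than tending to zero. This is precisely the content that Proposition~\ref{expodecay} is designed to supply, and it is what forces the infimum $M^*$ to be attained in the interior of $(\theta_c,1)$ so that the thresholds $\sigma_+(N)$ and $N_m(\sigma_+)$ are genuinely finite and positive.
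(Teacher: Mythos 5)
Your proposal is correct and follows essentially the same route as the paper: both arguments exploit the fact that under the rescaling $\tau=\sqrt{\sigma}\,y$ the comparison $p_\sigma\geq u_\alpha$ reduces to a single $\sigma$-independent threshold inequality $N/(N_0\sqrt{\sigma})\geq M(\alpha)$, and then take a min over $\alpha$ of a max over the profile; your $M(\alpha)$ is exactly $\sqrt{2\pi}\,e^{j_\alpha}$ in the paper's notation, the paper merely performing the same comparison on the inverse functions $\chi_\alpha$ and $T_{\sigma,N}$ and in logarithmic form. Your discussion of the endpoint behaviour of $M(\alpha)$ as $\alpha\to\theta_c^+$ and $\alpha\to 1^-$ is in fact more detailed than the paper's, which simply asserts that $j_\alpha$ and $j^*$ are well defined.
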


Part (i) of Proposition \ref{prop:singleR} asserts that if we fix the total number $N$ of mosquitoes to introduce, single introduction is a failure if diffusivity is too large.
Part (ii) is just the converse viewpoint: if we know estimates on the diffusivity 
(thanks to field experiments like mark-release-recapture for example \cite{Vil.Bayesian}), then we can define a minimal number $N_m$ of mosquitoes to introduce 
at a single location to succeed.

\begin{remark}
If $\alpha \in (\theta_c, 1)$ makes $N G_{\sigma}$ satisfy \eqref{nonext} (``be above the $\alpha$-bubble''), then necessarily (evaluating at $0$ to take the maximum of $G_{\sigma}$), $\alpha \leq \f{N}{N + \sqrt{2 \pi \sigma} N_0}$.
In particular, our under-estimation of the probability is equal to $0$ as soon as 
\[
 N< \sqrt{2 \pi \sigma} N_0 \f{\theta_c}{1 - \theta_c}.
\]
Equivalently, the density of mosquitoes at the center of the single release location $\f{N}{\sqrt{2 \pi \sigma}}$ should exceed $\f{\theta_c}{1 - \theta_c} N_0$ for 
our estimate to prove useful. (If $\theta_c = 0.8$, this is already $4$ times the existing mosquito density. 
If $\theta_c = \f{2}{3}$, then it is only $2$ times; 
in the case of Figure \ref{fig:reacprofiles}, $\theta_c=0.36$ and then the ratio is only $0.56$).
\end{remark}

{\it Proof of Proposition \ref{prop:singleR}.}
Both the introduction profile given by the fraction $\displaystyle\f{N G_{\sigma} (\tau)}{N G_{\sigma} (\tau) + N_0}$ and non-extinction bubbles from 
Theorem \ref{thm:invasion} built by \eqref{Palpha} ($(u_{\alpha} (\tau))$) are symmetric, radial-decreasing functions.
Instead of comparing them, we compare their reciprocals.
We define $T_{\sigma, N}$ such that for all $p \in [0, \alpha]$, 
\[
  \f{N G_{\sigma} \big( T_{\sigma,N} (p)\big)}{N G_{\sigma} \big( T_{\sigma,N} (p)\big) + N_0} = p, 
\]
and $\chi_{\alpha}$ such that $u_{\alpha} (\chi_{\alpha} (p)) = p$.
Respectively, they read
\beq
\bepa
T_{\sigma, N} (p) = \sqrt{2 \sigma} \sqrt{\log \Big( \displaystyle\f{N}{N_0 \sqrt{2 \pi \sigma}} \f{1 - p}{p} \Big)}, 
\\[10pt]
\chi_{\alpha} (p) = \sqrt{\displaystyle\f{\sigma}{2}} \displaystyle\int_ p^{\alpha} \f{dv}{\sqrt{F(\alpha) - F(v)}}.
\label{eq:reciprocals}
\eepa
\eeq

\begin{lemma}
The following equivalence holds
\[
\forall \tau \in \R_+, \,  \f{X_{\tau} (\sigma, N)}{X_{\tau} (\sigma, N) + N_0} \geq  u_{\alpha} (\tau)
\iff
\forall p \text{s. t. } 0 \leq p \leq \alpha, \, \chi_{\alpha} (p) \leq T_{\sigma, N} (p).
\]
This, in turn, rewrites as
\begin{equation}
 \log \big(\f{N}{N_0 \sqrt{2 \pi \sigma}} \big) \geq \big(\int_p^{\alpha} \f{dv}{2 \sqrt{F(\alpha) - F(v)}} \big)^2 - \log \big( \f{1-p}{p} \big), \forall p \in [0, \alpha].
 \label{eq:singleR}
\end{equation}
\end{lemma}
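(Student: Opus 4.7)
The plan is to derive both statements from the same underlying fact: since $p_\sigma$ and $u_\alpha$ are strictly decreasing on the intervals where they are positive, comparing them pointwise is equivalent to comparing their (decreasing) inverses $T_{\sigma,N}$ and $\chi_\alpha$.

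First I would establish the equivalence. The forward implication: assume $p_\sigma(\tau) \geq u_\alpha(\tau)$ for every $\tau \geq 0$. Fix $p \in [0,\alpha]$ and set $\tau_0 := \chi_\alpha(p)$, so $u_\alpha(\tau_0) = p$. By hypothesis $p_\sigma(\tau_0) \geq p$, and since $p_\sigma$ is strictly decreasing with inverse $T_{\sigma,N}$ (so $p_\sigma(T_{\sigma,N}(p)) = p$), this yields $\tau_0 \leq T_{\sigma,N}(p)$, i.e.\ $\chi_\alpha(p) \leq T_{\sigma,N}(p)$. Conversely, assume $\chi_\alpha(p) \leq T_{\sigma,N}(p)$ for every $p \in [0,\alpha]$. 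For $\tau \geq L_\alpha$ the inequality $p_\sigma(\tau) \geq u_\alpha(\tau) = 0$ is automatic (we view $u_\alpha$ as extended by $0$, consistently with Definition~\ref{def:valpha}). For $\tau \in [0, L_\alpha]$, let $p := u_\alpha(\tau) \in [0,\alpha]$, so $\tau = \chi_\alpha(p) \leq T_{\sigma,N}(p)$, and $p_\sigma$ decreasing gives $p_\sigma(\tau) \geq p_\sigma(T_{\sigma,N}(p)) = p = u_\alpha(\tau)$. Note the inequality at $\tau = 0$ forces $p_\sigma(0) \geq \alpha$, so $\alpha$ belongs to the range of $p_\sigma$ and all inversions above are well defined.

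Next I would derive the explicit reformulation \eqref{eq:singleR}. Since both $\chi_\alpha(p) \geq 0$ and $T_{\sigma,N}(p) \geq 0$, the inequality $\chi_\alpha(p) \leq T_{\sigma,N}(p)$ is equivalent to its square. Plugging in the closed-form expressions \eqref{eq:reciprocals} gives
\[
\frac{\sigma}{2}\Bigl(\int_p^{\alpha}\frac{dv}{\sqrt{F(\alpha)-F(v)}}\Bigr)^{2} \;\leq\; 2\sigma\,\log\!\Bigl(\frac{N}{N_0\sqrt{2\pi\sigma}}\frac{1-p}{p}\Bigr).
\]
Dividing by $4\sigma$ and pulling the factor $1/2$ inside the integral turns the left-hand side into $(\int_p^{\alpha}\frac{dv}{2\sqrt{F(\alpha)-F(v)}})^{2}$. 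Splitting the logarithm as $\log(\frac{N}{N_0\sqrt{2\pi\sigma}}) + \log(\frac{1-p}{p})$ and rearranging yields exactly \eqref{eq:singleR}.

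The only point to be slightly careful about is the handling of the endpoint behaviour (the integrand in $\chi_\alpha$ is integrable near $v = \alpha$ by Proposition~\ref{prop:ualpha}, and $T_{\sigma,N}(p)$ is defined for $p \leq p_\sigma(0)$, which holds for all $p \in [0,\alpha]$ under the non-extinction condition); besides that, the argument is purely algebraic once monotonicity has reduced the problem to the level-set picture. I do not expect any serious obstacle — this lemma is essentially a bookkeeping translation of the pointwise inequality into a one-parameter family of scalar inequalities via inversion.
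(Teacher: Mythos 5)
Your proof is correct and follows the same route the paper intends: the paper's entire proof is the one-line remark that the equivalence ``follows obviously from \eqref{eq:reciprocals}'', i.e.\ precisely the monotonicity-and-inversion argument you spell out, followed by the same squaring and rearrangement. (One cosmetic slip: to pass from $\f{\sigma}{2}(\int_p^\alpha \f{dv}{\sqrt{F(\alpha)-F(v)}})^2 \le 2\sigma\log(\cdots)$ to \eqref{eq:singleR} you divide by $2\sigma$, not $4\sigma$; your displayed conclusion is nonetheless the right one.)
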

This property follows obviously from \eqref{eq:reciprocals}.

From \eqref{eq:singleR}, we define
\begin{align}
&J_{\alpha} (p) :=  \log (p) - \log(1-p)  + \big( \int_{p}^{\alpha} \f{dv}{2\sqrt{F(\alpha) - F(v)}}\big)^2, \\
& I(\sigma, N) := \log \Big( \f{N}{\sqrt{2 \pi \sigma} N_0} \Big).
\end{align}

For any given $N$, the problem we want to solve amounts at finding couples $(\alpha, \sigma)$ such that 
\beq
\forall p \in [0, \alpha], \, J_{\alpha} (p) \leq I (\sigma, N).
\eeq

\begin{lemma}
  There exists $C > 0$ such that for all $N, \sigma$, 
  there exists $\alpha \in (\theta_c, 1]$ such that $J_{\alpha} \leq I (\sigma, N)$ 
  if, and only if,
  \begin{equation}
   N \geq C N_0 \sqrt{2 \pi \sigma} .
   \label{eq:Nsigma}
  \end{equation}

\label{prop:dim1}
\end{lemma}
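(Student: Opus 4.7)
The plan is to reduce the iff to a reformulation in terms of a single scalar invariant of $f$. Observe that asking $J_\alpha(p) \leq I(\sigma, N)$ for all $p \in [0,\alpha]$ is the same as asking $\sup_{p \in [0,\alpha]} J_\alpha(p) \leq I(\sigma, N)$, so the existence of a suitable $\alpha \in (\theta_c, 1]$ is equivalent to
\[
M := \inf_{\alpha \in (\theta_c,1]}\, \sup_{p\in[0,\alpha]} J_\alpha(p) \,\leq\, I(\sigma, N) \,=\, \log\!\Bigl(\f{N}{\sqrt{2\pi\sigma}\,N_0}\Bigr).
\]
Exponentiating and isolating $N$ shows the lemma holds with $C := e^M$, provided one can prove that $M$ is a finite real number (depending only on $f$) and---to ensure the boundary inequality is $\geq$ rather than $>$---that the infimum is attained.

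For the finiteness of $M$, fix any $\alpha_0 \in (\theta_c, 1)$. Since $F(\alpha_0) > 0$, the integrand $v \mapsto 1/\sqrt{F(\alpha_0)-F(v)}$ is bounded away from $v=\alpha_0$ and has the integrable $1/\sqrt{\alpha_0-v}$-type singularity there. Hence $p \mapsto \int_p^{\alpha_0} dv/(2\sqrt{F(\alpha_0)-F(v)})$ is continuous and bounded on $[0,\alpha_0]$; since $\log(p/(1-p)) \to -\infty$ as $p\to 0^+$ we have $J_{\alpha_0}(0^+) = -\infty$ and therefore $\sup_p J_{\alpha_0}(p) < +\infty$, so $M < +\infty$. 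Conversely, $\sup_p J_{\alpha}(p) \geq J_\alpha(\alpha) = \log(\alpha/(1-\alpha)) \geq \log(\theta_c/(1-\theta_c))$ for every $\alpha \in (\theta_c, 1]$, so $M \geq \log(\theta_c/(1-\theta_c)) > -\infty$ and thus $M \in \R$.

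To obtain the attainment of $M$, I would show that $\alpha \mapsto \sup_p J_\alpha(p)$ is continuous on $(\theta_c,1)$ and blows up at both endpoints, whence the minimum is reached at some interior $\alpha^*$. At $\alpha \to 1^-$ the blow-up is trivial since $J_\alpha(\alpha) = \log(\alpha/(1-\alpha)) \to +\infty$. At $\alpha \to \theta_c^+$, the critical $\alpha$-bubble $u_\alpha$ degenerates into the ground state $u_{\theta_c}$ of Proposition \ref{expodecay}, whose support is unbounded; geometrically, no Gaussian of finite mass can stay above such a spread-out profile unless $N/\sqrt{\sigma}$ becomes arbitrarily large, so one expects $\sup_p J_\alpha(p) \to +\infty$. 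Making this precise is the main technical obstacle: it requires choosing a family $p = p(\alpha) \to 0$ along which the squared integral in $J_\alpha$ grows (using the expansion $F(v) \sim f'(0)v^2/2$ near $v=0$ to produce a logarithmic divergence of the integral) while dominating the compensating $\log p$ term. Once this blow-up and the routine continuity of $\alpha \mapsto \sup_p J_\alpha(p)$ on the compact middle region are established, the infimum $M$ is attained and the equivalence stated in the lemma follows with $C = e^M$ depending only on the reaction $f$.
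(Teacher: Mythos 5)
Your argument is essentially the paper's: both reduce the lemma to taking $C = e^{j^*}$ with $j^* = \min_{\alpha \in (\theta_c,1]}\max_{p\in[0,\alpha]} J_\alpha(p)$, the paper simply asserting that this min--max is well-defined after noting $J_\alpha(p)\to-\infty$ as $p\to 0$ and $J_\alpha(\alpha)=\log(\alpha/(1-\alpha))$. Your additional verification of finiteness and of attainment of the infimum --- via the blow-up of $\sup_p J_\alpha(p)$ as $\alpha\to\theta_c^+$, driven by the logarithmic divergence of the integral coming from $F(v)\sim f'(0)v^2/2$ near $v=0$ --- is correct and is in fact more careful than the paper's own proof, which omits this point entirely.
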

\begin{proof}
First, we note that $J_{\alpha} (p) \xrightarrow[p \to 0]{} - \infty$, $J_{\alpha} (\alpha) = \log \big( \f{\alpha}{1 -\alpha} \big)$ and it is continuous.
Moreover,
\[
 J_{\alpha}'(p) = \f{1}{p (1-p)} - \f{1}{\sqrt{F(\alpha) - F(p)}} \int_p^{\alpha} \f{dv}{2\sqrt{F(\alpha) - F(v)}},
\]
and we may compute $\lim_{p \to \alpha} J_{\alpha}' (p) = \f{1}{\alpha (1-\alpha)} - \f{1}{f(\alpha)}$.

Then, we simply introduce 
\[
  j_{\alpha} := \max_{p \in [0, \alpha]} J_{\alpha} (p), \quad j^* := \min_{\alpha \in (\theta_c, 1]} j_{\alpha},
\]
which are well-defined.

Thus, the following is a necessary and sufficient condition for the existence of $\alpha \in (\theta_c, 1]$ such that \eqref{eq:singleR} holds:
\[
  N \geq N_0 \sqrt{2 \pi \sigma} e^{j^*}.
\]
We arrived at \eqref{eq:Nsigma}, upon choosing $C = e^{j^*}$.
\qed
\end{proof}

Finally, Equation~\eqref{eq:Nsigma} gives Proposition \ref{prop:singleR} (i) with $\sigma_+ (N) = \f{e^{-2 j^*}}{2 \pi} \big( \f{N}{N_0} \big)^2$
and Proposition \ref{prop:singleR} (ii) with $N_m = N_0 \sqrt{2 \pi \sigma_+} e^{j^*}$.

\qed

\begin{remark}
 For realistic values of diffusivity and density $N_0$, the expected number of mosquitoes to release is huge, since we may have
 $N_0 \simeq 10^{-2}$, $\sqrt{2 \pi \sigma} \simeq 72$, but $j^* \simeq 38$.
 Here, the model has a clear and crucial conclusion: it is very hard to invade a wide 
 area with a single, localized release.

 Therefore, we must model several releases (whether in time or in space). 
 In the rest of the paper we are going to discuss the case of multiple releases at same time $t=0$.
\end{remark}

\subsection{Equally spaced releases}

Similarly, if we space the $k$ release points regularly in the interval 
$[-L_{\alpha}, L_{\alpha}]$, within a fairly good approximation, 
we obtain the minimal number of mosquitoes to release as
\[
 \widetilde{N} (k, \alpha, \sigma) = \f{N_0 \sqrt{2 \pi \sigma}}{2} \f{\alpha}{1-\alpha} k e^{\f{L_{\alpha}^2}{2 \sigma (k-1)^2}}.
\]
This equation can be used in different ways, just like the above formula \eqref{eq:Nsigma}. If we fix $\sigma$ then we may try to find an optimal $k$ 
(both optimization problems in $\alpha$ and in $k$ must be solved together in this case).
Or fixing $N$, or $N/k$ (number of mosquitoes per release), we can do the same and find the optimal number of releases $k$.

It is straightforward, keeping in mind that $L_{\alpha}$ is proportional to $\sqrt{\sigma}$, 
that the optimal $\alpha$ here merely depends on $k$, not on $\sigma$.
We may introduce
\[
 j^* (k) := \min_{\alpha \in (\theta_c, 1)} \f{\alpha}{1-\alpha} e^{L_{\alpha}^2 / (2 \sigma (k-1)^2)}.
\]
Then, we find the minimal (in view of our sufficient criterion) value $\widetilde{N}^*$ for $\widetilde{N}$:
\begin{lemma}
For $k$ equally spaced releases on the line, there exists an invading release profile with $L^1$ norm:
\begin{equation}
 \widetilde{N}^* (k, \sigma) = N_0 \sqrt{2 \pi \sigma} \f{k}{2} j^* (k).
\end{equation}
\end{lemma}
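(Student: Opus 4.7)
The plan is to construct an explicit invading release profile $X_\tau = \f{N}{k} \sum_{i=1}^k G_\sigma(\tau - \tau_i)$ with release locations $\tau_i$ equally spaced on $[-L_\alpha, L_\alpha]$, and to track the smallest admissible $N$. Since $\int_\R X_\tau \, d\tau = N$, this value is precisely the $L^1$ norm announced in the lemma. To do so, I would fix the points $\tau_i := -L_\alpha + (i-1) \f{2L_\alpha}{k-1}$ for $i = 1, \dots, k$, so that by symmetry the shift $\tau_0 = 0$ is the natural candidate in \eqref{nonext}.

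By Definition~\ref{def:valpha} the $\alpha$-bubble satisfies $v_\alpha \leq \alpha$ on $[-L_\alpha, L_\alpha]$ and vanishes outside, so \eqref{nonext} is implied by the pointwise estimate
\[
 \sum_{i=1}^k G_\sigma(\tau - \tau_i) \geq \f{k N_0}{N} \f{\alpha}{1-\alpha} \qquad \text{for all } \tau \in [-L_\alpha, L_\alpha].
\]
At any interior midpoint $\f{\tau_i + \tau_{i+1}}{2}$ the two nearest release points lie at distance exactly $L_\alpha/(k-1)$, so keeping only those two contributions in the Gaussian sum yields the sufficient condition $2 G_\sigma\bigl(L_\alpha/(k-1)\bigr) \geq \f{k N_0}{N} \f{\alpha}{1-\alpha}$. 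Using $G_\sigma(x) = (2\pi\sigma)^{-1/2} e^{-x^2/(2\sigma)}$, this rearranges into $N \geq \widetilde{N}(k, \alpha, \sigma)$.

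It then remains to optimize in $\alpha \in (\theta_c, 1)$. By \eqref{def:lalpha}, $L_\alpha^2/\sigma$ depends only on $\alpha$ and on the reaction term $f$, hence both $\f{\alpha}{1-\alpha}$ and the exponent $\f{L_\alpha^2}{2\sigma(k-1)^2}$ are $\sigma$-independent. Consequently the minimum $j^*(k)$ is well-defined and $\sigma$-independent, and substituting back produces $\widetilde{N}^*(k, \sigma) = N_0 \sqrt{2\pi\sigma}\, \f{k}{2}\, j^*(k)$. Theorem~\ref{thm:invasion} then guarantees invasion for the associated profile.

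The main technical caveat lies in the worst-point analysis: I drop all but the two nearest Gaussian contributions, and implicitly take the minimum of the full sum on $[-L_\alpha, L_\alpha]$ to be attained at an interior midpoint rather than possibly at the boundary $\pm L_\alpha$. For the mere \emph{existence} of an invading profile this simplification is harmless, since restoring the discarded terms only increases the left-hand side of the pointwise estimate; but it would have to be revisited if one wished to assert sharpness of $\widetilde{N}^*$, and it is precisely the source of the ``fairly good approximation'' qualifier appearing just before the lemma.
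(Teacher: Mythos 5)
Your reconstruction matches the paper's (largely implicit) derivation: the paper obtains $\widetilde{N}(k,\alpha,\sigma)$ only ``within a fairly good approximation'' via exactly the two-nearest-neighbour bound at the midpoints that you use, and the lemma is then just the minimization over $\alpha$, whose $\sigma$-independence you justify the same way (through $L_\alpha \propto \sqrt{\sigma}$). One small correction to your closing caveat: restoring the discarded Gaussians does \emph{not} dispose of the boundary issue, since at $\tau=\pm L_\alpha$ the full sum can still fall below the level $\f{\alpha}{1-\alpha}N_0$ for relevant spacings; what actually makes the endpoints harmless is that \eqref{nonext} only requires dominating $v_\alpha$, which decays to $0$ at $\pm L_\alpha$, rather than the constant level $\alpha$ — a point neither you nor the paper verifies in detail, which is precisely why the formula is presented as an approximation.
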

Then, it becomes an easy numerical task to find the best possible value for $k$.

However, we want to take into account the uncertainties and variability in the release protocol and population fixation.
Namely, the release points might not be exactly equally spaced, so that introducing $\widetilde{N}^*$ mosquitoes would only give some probability of success.
This is what we want to quantify now and shall be addressed in Section \ref{sec:multiplereleases}.

\subsection{Multiple releases: towards a geometric problem}
\label{sec:multiplereleases}

When we sum several Gaussians, the profile is neither symmetric (in general), nor monotone. 
Therefore the previous analytical argument does not apply. 
However, at the cost of fixing $\sigma$ we are left with a simple geometric problem.

\paragraph{First step: fixing $\sigma$ and bounding by level rather than profile.} We assume first that there is no uncertainty on $\sigma$, which is 
taken equal to $\sigma_0$ ($\epsilon = 0$ in \eqref{eq:rpset}).
As a further simplification, we shall not compare the introduction frequency profile to some $\alpha$-bubble (because it is too hard), 
but rather to the very simple upper bound of an $\alpha$-bubble: the characteristic function $\tau \mapsto \alpha \mathbb{1}_{- L_{\alpha} \leq \tau \leq L_{\alpha}}$.

Moreover, we assume that our $k$ release locations $(x_i)_{1 \leq i \leq k}$ are within the compact set $[-L, L]$, for some $L>0$.
As above, we write 
$$G_{\sigma} (y) := \f{1}{\sqrt{2 \pi \sigma}} e^{-y^2 / 2 \sigma} \mbox{, }$$
and $$\mathcal{G} = \f{N}{k} \sum_{i=1}^k G_{\sigma} (\cdot - x_i).$$

We define
\beq
P(\sigma, \f{N}{k}, (x_i)_{1 \leq i \leq k}, L_0, \alpha) := \min_{[-L_{\alpha} +L_0, L_{\alpha} + L_0 ]} \mathcal{G}
\eeq

Then, the probability of success for the release of $N$ mosquitoes in a total of $k$ different sites in $[-L, L]^k$, when they all spread according to $\sigma$ diffusivity, 
and the initial population density was $N_0$, is given by:
\beq
P_k (N, L) = \Pro \Big[ \exists L_0 \in \R, \, \exists \alpha \in (\theta_c, 1), \, P(\sigma, \f{N}{k}, (x_i)_{1 \leq i \leq k}, L_0, \alpha) \geq \f{\alpha}{1 - \alpha} N_0 \Big] \mbox{.}
\eeq
Here, the probability $\Pro$ is taken over all the real $k$-uples $(x_l)_{1 \leq l \leq k}$ such that $-L < x_1 \leq \dots \leq x_k < L$, and $[-L, L]^k$ is equipped with the uniform measure.

\paragraph{Second step: transformation into a geometric problem.}

In order to get a more tractable bound, we make use of the following property:
\begin{proposition}
 Let $(x_i)_{1 \leq i \leq k} \in [-L, L]^k$ with $x_1 \leq \dots \leq x_k$. Let 
$\mathcal{G} = \frac{N}{k}\sum_{i=1}^k G_{\sigma} (\cdot - x_i)$.
 
 If there is $\alpha \in (\theta_c, 1)$ such that 
 $$\f{N}{k} \f{1}{\sqrt{2 \pi \sigma}} \geq \f{\alpha}{1-\alpha} N_0$$ 
 and $1 \leq l < m \leq k$ such that
 \begin{itemize}
  \item[(i)] $\forall l \leq j \leq m-1, \, x_{j+1} - x_j \leq 2 \sqrt{2 \log(2)} \sqrt{\sigma}$,
  \item[(ii)] $ x_m - x_l \geq 2 L_{\alpha}$,
 \end{itemize}
  then 
  $$\f{\mathcal{G}}{\mathcal{G} + N_0} \geq v_{\alpha} \big(\cdot - \f{x_m + x_l}{2} \big) \mbox{ .}$$
  
  \label{prop:geom}
\end{proposition}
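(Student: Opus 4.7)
The plan is to reduce the claimed functional inequality to a simple pointwise lower bound for the Gaussian sum $\mathcal{G}$ on the support of the candidate translated bubble, and then to establish that bound using only the two release points nearest to each evaluation point.

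First, I would observe that since $v_\alpha$ is compactly supported on $[-L_\alpha, L_\alpha]$ and bounded above by $\alpha$, the target inequality $\mathcal{G}/(\mathcal{G}+N_0) \ge v_\alpha\bigl(\cdot - \tfrac{x_m+x_l}{2}\bigr)$ is trivially satisfied outside the translated interval $I := \bigl[\tfrac{x_m+x_l}{2} - L_\alpha,\ \tfrac{x_m+x_l}{2} + L_\alpha\bigr]$, and reduces on $I$ to proving the uniform bound $\mathcal{G}/(\mathcal{G}+N_0) \ge \alpha$, i.e.\ $\mathcal{G} \ge \tfrac{\alpha}{1-\alpha} N_0$ (using that $t \mapsto t/(t+N_0)$ is increasing). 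By hypothesis (ii), $I \subset [x_l, x_m]$.

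Next, for $x \in I$, I would pick the consecutive pair $(x_j, x_{j+1})$ with $l \le j \le m-1$ enclosing $x$, setting $a = x-x_j \ge 0$ and $b = x_{j+1}-x \ge 0$, so that hypothesis (i) gives $a+b \le 2\sqrt{2\log 2}\sqrt\sigma$. Dropping every other release point, one gets
\[
\mathcal{G}(x) \ge \frac{N}{k}\bigl(G_\sigma(x-x_j) + G_\sigma(x-x_{j+1})\bigr) = \frac{N}{k}\frac{1}{\sqrt{2\pi\sigma}}\bigl(e^{-a^2/2\sigma} + e^{-b^2/2\sigma}\bigr).
\]
The claim would then follow from the hypothesis $\tfrac{N}{k}\tfrac{1}{\sqrt{2\pi\sigma}} \ge \tfrac{\alpha}{1-\alpha} N_0$ provided we can show the elementary Gaussian estimate
\[
e^{-a^2/2\sigma} + e^{-b^2/2\sigma} \ge 1 \quad \text{whenever } a,b \ge 0,\ a+b \le 2\sqrt{2\log 2}\sqrt\sigma.
\]

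The main (and only non-trivial) step is this Gaussian inequality, which I would establish in two short sub-steps. First, with $b$ fixed, $a \mapsto e^{-a^2/2\sigma}$ is non-increasing on $\R_+$, so the minimum of the left-hand side over the triangle $\{a,b \ge 0, a+b \le 2\sqrt{2\log 2}\sqrt\sigma\}$ is attained on the oblique boundary $a+b = 2\sqrt{2\log 2}\sqrt\sigma$. Second, on this segment, parametrising by $a$ and differentiating shows the critical point is the symmetric one $a = b = \sqrt{2\log 2}\sqrt\sigma$ (endpoints giving the strictly larger value $1 + 2^{-4} = 17/16$), where the sum equals $2 e^{-\log 2} = 1$. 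This tight case is precisely what makes the constant $2\sqrt{2\log 2}$ sharp for the argument, and it mirrors the factor $\sqrt{2\sigma\log 2}$ already used in the proof of Proposition~\ref{prop:coupon}; the improvement here comes from summing the contributions of the \emph{two} neighbouring release points rather than just the nearest one. Combining this with the three earlier reduction steps concludes the proof.
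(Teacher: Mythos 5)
Your reduction and your key lemma are exactly those of the paper: you restrict attention to the translated interval of length $2L_\alpha$ inside $[x_l,x_m]$, bound $v_\alpha$ by the constant $\alpha$ there, keep only the two release points adjacent to each evaluation point, and reduce everything to the two--Gaussian inequality $e^{-a^2/2\sigma}+e^{-b^2/2\sigma}\ge 1$ for $a,b\ge 0$, $a+b\le 2\sqrt{2\log 2}\,\sqrt{\sigma}$, which (after the change of variables $x=(a-b)/2$, $h=(a+b)/2$) is precisely the paper's statement that $\xi(x)=G_\sigma(x+h)+G_\sigma(x-h)\ge G_\sigma(0)$ on $[-h,h]$ for $h\le\sqrt{2\log(2)\sigma}$, with the same sharp constant and the same endpoint value $17/16$.

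The one step that does not hold as written is your assertion that, on the oblique boundary $a+b=c$ with $c=2\sqrt{2\log 2}\,\sqrt{\sigma}$, ``differentiating shows the critical point is the symmetric one.'' Writing $g(a)=e^{-a^2/2\sigma}+e^{-(c-a)^2/2\sigma}$, one has $g'(a)=\frac{1}{\sigma}\big(\phi(c-a)-\phi(a)\big)$ with $\phi(t)=t\,e^{-t^2/2\sigma}$, and since $g'(0)=\frac{1}{\sigma}\phi(c)>0$ while $g$ decreases just to the left of $a=c/2$ (because $c/2>\sqrt{\sigma}$ makes the midpoint a local \emph{minimum}), there is necessarily an additional critical point in $(0,c/2)$ (and its mirror image); numerically $g$ rises from $17/16$ at $a=0$ to about $1.078$ before falling to $1$ at the midpoint. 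So the midpoint is not the unique critical point, and without further argument one of the extra critical points could a priori be a competing minimum. The conclusion is nonetheless true, and the repair is exactly what the paper does: show that $\xi$ has a local minimum at the center when $h>\sqrt{\sigma}$, that it has at most one critical point on each half-line (via the equation $x=h\tanh(xh/\sigma)$), hence that this extra critical point is a maximum, so the minimum over the segment is attained either at the center (value $1$) or at the endpoints (value $17/16$). With that uniqueness-of-the-interior-extremum argument supplied, your proof coincides with the paper's.
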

We notice that the constant $2 \sqrt{2 \log (2)} \simeq 2.35$ is optimal with this property: if two translated Gaussians centered at $x_0, x_1$ are at a distance $x_1 - x_0 = \lambda \sqrt{\sigma}$, with $\lambda > 2 \sqrt{2 \log (2)}$, then their sum is smaller at $\f{x_0 + x_1}{2}$ than at $x_0$.

\begin{proof}
 This property relies on the simple computation that the sum of two $G_{\sigma}$s, centered at $-h$ and $h$ ($h>0$), is greater than $G_{\sigma} (0)$ on $[-h, h]$ as soon as $h \leq \sqrt{2 \log(2)}\sqrt{\sigma}$.
 Figure \ref{fig:sum} illustrates this property.
   \begin{figure}[h]
   \includegraphics[width=.5\textwidth]{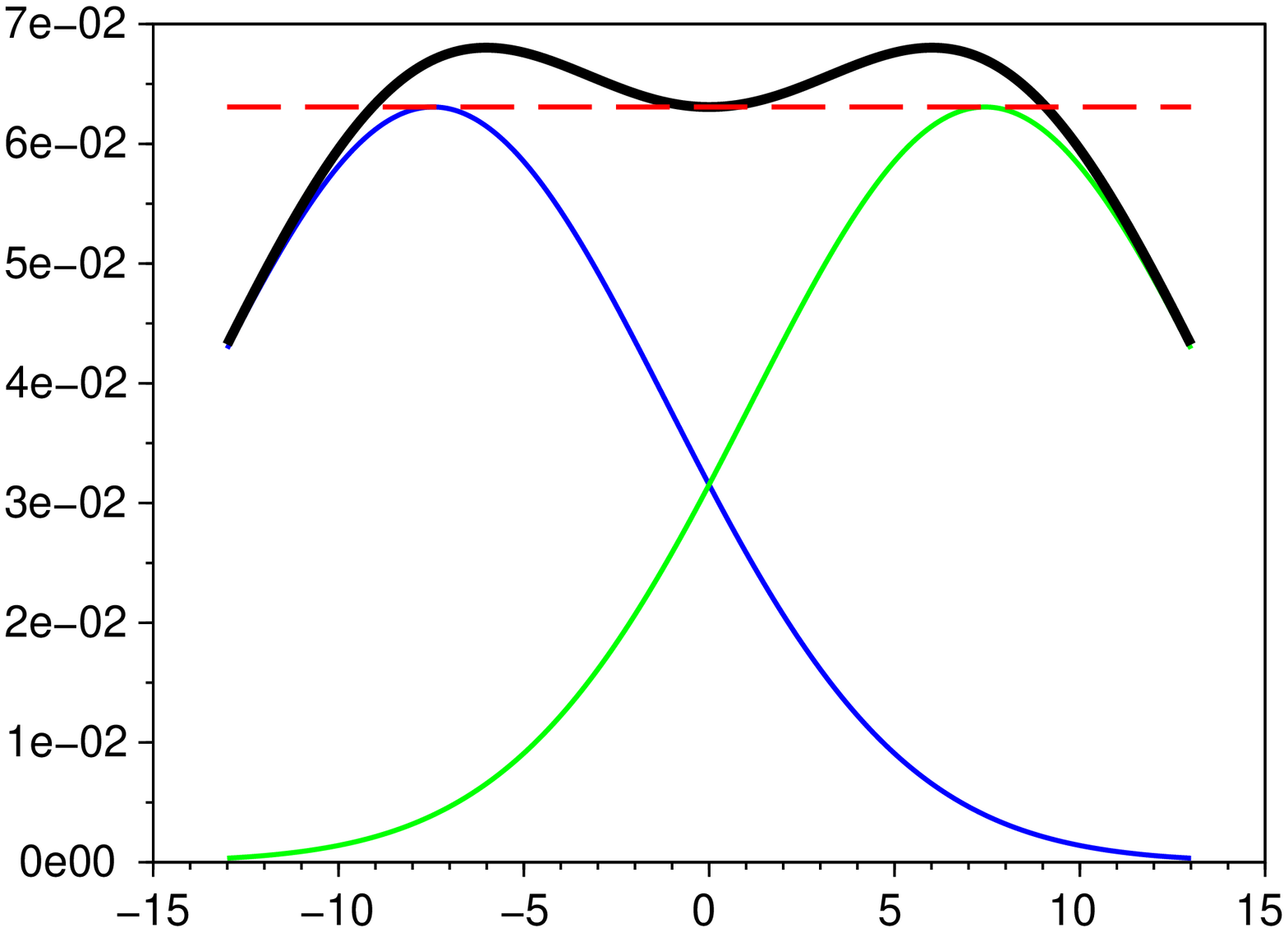}
   \includegraphics[width=.5\textwidth]{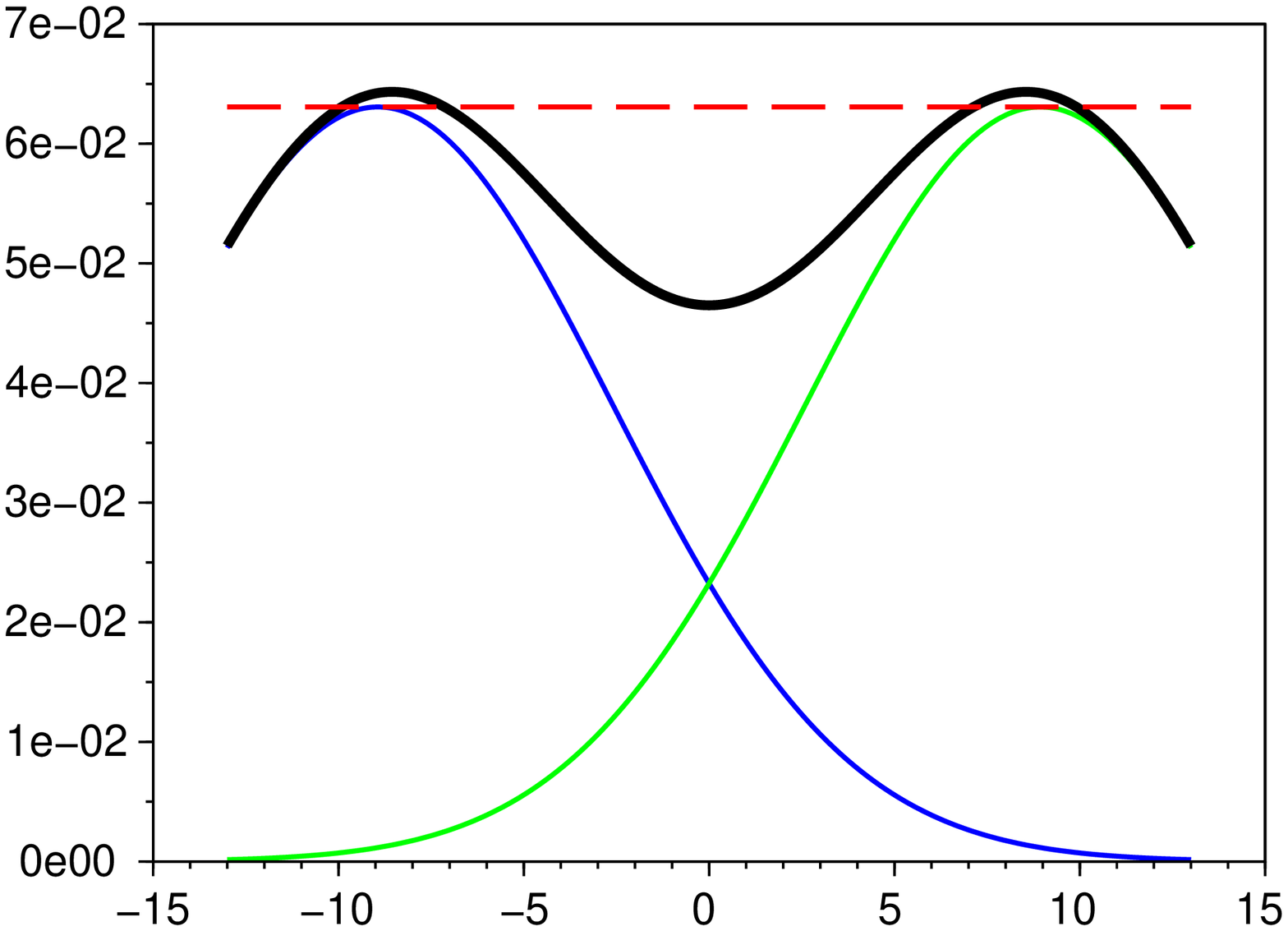}
   \caption{Two $G_{\sigma}$ profiles and their sum (in thick line). The level $G_{\sigma} (0)$ is the dashed line. On the left, $h=\sqrt{2\log(2)\sigma}$. On the right, $h>\sqrt{2\log(2)\sigma}$.}
   \label{fig:sum}
  \end{figure}

Indeed, considering the sum of two Gaussian $G_\sigma$,
 \[
  \xi (x) = \f{1}{\sqrt{2 \pi \sigma}} \Big(e^{-\f{(x + h)^2}{2 \sigma}} +e^{-\f{(x - h)^2}{2 \sigma}} \Big) = 2 e^{-\f{h^2}{2 \sigma}} G_{\sigma} (x)  \cosh ( \f{xh}{\sigma} ).
 \]
  Then, recalling that $\sigma G_{\sigma}'(z) = - z G_{\sigma} (z) $, we compute
  \begin{align*}
   \f{1}{2} e^{\f{h^2}{2 \sigma}} \sigma \xi'(x) &= - x G_{\sigma} (x) \cosh (\f{xh}{\sigma}) + h G_{\sigma} (x) \sinh (\f{xh}{\sigma})\\
   \f{1}{2} e^{\f{h^2}{2 \sigma}} \sigma^2 \xi''(x) &= (h^2 + x^2-\f{1}{\sigma}) G_{\sigma} (x) \cosh (\f{xh}{\sigma}) - 2 h x G_{\sigma} (x) \sinh (\f{xh}{\sigma}).
  \end{align*}
  As a consequence, the sign of $\xi'' (x)$ is that of
  \[
   \gamma(x) := h^2 + x^2 - 2 h x \tanh (\f{xh}{\sigma}) - \f{1}{\sigma}.
  \]
  We notice that $\gamma(0) = h^2 - \f{1}{\sigma}$. Hence, $\xi$ has a local maximum (resp. a local minimum) at $x=0$ if $h < \sqrt{\sigma}$ (resp. $h > \sqrt{\sigma}$).
  Since $\xi(0) = 2 e^{-\f{h^2}{2 \sigma}} G_{\sigma} (0)$, the maximal $h > 0$ that ensures
  $\xi(0) \geq G_{\sigma} (0)$ is $h=h_0 :=\sqrt{2 \log(2) \sigma}$.
  
  Now, we examine the necessary condition $\xi'(x) = 0$ for a local extremum on $(-h, h)$. It implies
  \[
   x = h \tanh (\f{xh}{\sigma}).
  \]
  This is true for $x=0$ (and we have seen the condition on $h - \sqrt{\sigma}$ to have a local extremum indeed).
  Then, there is a solution $x_+ > 0$ if, and only if, $\f{h^2}{\sigma} > 1$, \textit{i.e.} $h > \sqrt{\sigma}$. 
  In this case, $x_+$ is unique (and $x_- := - x_+$ is a solution as well).

  So, for $h=h_0>\sqrt{\sigma}$, we know that $\xi$ has a local minimum at $x=0$, is smooth, has at most one local extremum on $(0, + \infty)$, and goes to $0$ at $+\infty$.
  Hence, this local extremum exists and is a maximum. 
  Therefore (and by symmetry), the minimum of $\xi$ on $(-h, h)$ is attained at $x=0$ or $x=h$.
  Since $h=h_0$, $\xi(h) > \xi(0) = G_{\sigma} (0)$. 
  We deduce that $\xi > G_{\sigma} (0)$ on $(-h, h)$.
 
  We may use this property to prove Proposition \ref{prop:geom}.
  By condition (i) the above lower-bound 
  holds between $x_l$ and $x_m$, and not only between two adjacent locations $x_j, x_{j+1}$.
  Now, the first condition implies that $G_{\sigma} (0) \geq \f{\alpha}{1-\alpha} N_0$.
  Combining these two facts with $x_m - x_l \geq 2 L_{\alpha}$ implies that
  \[
   \f{\mathcal{G}}{\mathcal{G} + N_0} \geq \alpha,
  \]
  on $[x_l,x_m]$ which is an interval of length at least $[-L_{\alpha}, L_{\alpha}]$. 
  Precisely, for all $x \in \R$,
  $$
  \f{\mathcal{G}(x - \f{x_m+x_l}{2})}{\mathcal{G}(x - \f{x_m+x_l}{2})+N_0} \geq \alpha \geq v_{\alpha}(x - \f{x_m+x_l}{2}).
  $$ 
  
 \end{proof}

As a consequence, we may translate the generic inequality \eqref{sucessprobability} into:
 \begin{multline}
  P_k^1(N,(-L,L)) = P_k (N, L) \geq \Pro \Big[ \exists \alpha \in (\theta_c, \f{1}{1+ \f{N_0}{N} k \sqrt{2 \pi \sigma}}), \exists 1 \leq l < m \leq k, \\
  x_m - x_l \geq 2 L_{\alpha} \text{ and } \forall l \leq j \leq m-1, x_{j+1} - x_j \leq 2 \sqrt{2 \log(2)} \sqrt{\sigma} \Big]
  \label{eq:cs1}
 \end{multline}
 
 Then, we define 
 \[
 L^* := \min_{\theta_c < \alpha \leq \f{1}{1+\f{N_0}{N} k \sqrt{2 \pi \sigma}}} L_{\alpha},
  \]
and estimate \eqref{eq:cs1} is equivalent to
\begin{equation}
  P_k (N, L) \geq 
  \Pro \Big[ \exists 1 \leq l < m \leq k, x_m - x_l \geq 2 L^* 
   \text{ and } \max_{l \leq j \leq m-1} (x_{j+1} - x_j) \leq 2 \sqrt{2 \log(2)} \sqrt{\sigma} \Big].
  \label{simpleproba}
\end{equation}
The study of the minimization of $L_\alpha$ with respect to $\alpha$ is discussed further in Appendix.

\begin{remark}
Note that for this estimate, we only consider initial data that are above a characteristic function at level $\alpha$ on an interval of length $2 L_{\alpha}$. This is far from being the optimal way to be above the $\alpha$-bubble $v_{\alpha}$.
\end{remark}

\begin{remark}
It is easy to check that our estimate yields $0$ (no information) as long as $k$ is too small, namely $k \sqrt{2 \log(2)} \sqrt{\sigma} \leq L^*$.
A necessary condition for our estimate not to yield $0$ may read:
\[
 k \geq \f{1}{\sqrt{2 \log(2)}} \min_{\theta_c < \alpha \leq 1} \int_0^{\alpha} \f{dv}{\sqrt{2 \big(F(\alpha) - F(v) \big)}}.
\]
\end{remark}

\paragraph{Specific discussion for $\alpha = \theta_c$.} By Proposition \ref{expodecay}, $u_{\theta_c}$ decays exponentially. As a consequence, no sum of $G_{\sigma}$s may be above it.
This is why this profile cannot be used in our approach (because we consider that introduction profiles should be Gaussian).

\subsection{Analytical computations of the probability of success: recursive formulae}

In order to compute analytically the right-hand-side in \eqref{simpleproba}, we may introduce the following notations:
 \begin{itemize}
  \item $\mathcal{T}_k (u, v)$ is the set of ordered $k$-uples between $u$ and $v$ ($u<v \in \R$), the measure of which is
 \[
  \tau_k (u, v) = \f{(v-u)_+^k}{k!}.
 \]
  \item $\mathcal{C}_k^{\lambda} (u,v) \subseteq \mathcal{T}_k (u,v)$ is the subset of $k$-uples such that $y_1= u$, $y_k = v$ and for all $l \in \llbracket 1, k-1 \rrbracket$, $y_{l+1} - y_l \leq \lambda$. 
    Its measure is denoted $\gamma_k^\lambda(u,v)$.
  \item $\mathcal{B}^{\lambda, R^*}_k (u,v) \subseteq \mathcal{T}_k (u,v)$ is the subset of $k$-uples such that $\exists 1 \leq l < m \leq k, y_m - y_l \geq R^*$ and $\max_{l \leq j \leq m-1} (y_{j+1} - y_j) \leq \lambda$.
    We denote $\beta_k^{L,R^*}(u,v)$ its measure.
 \end{itemize}
 We want to under-estimate the probability of success with $k$ releases in the box $[-L, L]$.
 In view of \eqref{sucessprobability}, it amounts to computing $\f{\beta_k^{\lambda, R^*} (-L, L)}{\tau_k (-L, L)}$. 
 In fact, we get a general recursive formula for $\beta$ in the following proposition.

\begin{proposition}
  Let $k_0 := \llceil \f{R^*}{\lambda} \rrceil + 1$. Then,
  \begin{multline}
   \beta_k^{\lambda, R^*} (-L, \chi) = \sum_{i=k_0}^k \sum_{j=1}^{k-i+1} \int_{-L}^{\chi-R^*} \int_{u+R^*}^{\min (\chi, u + (k-1) \lambda) } \gamma_i^{\lambda} (u, v)
   \\  \Big( \tau_{j-1} \big(-L, u - \lambda \big) - \beta_{j-1}^{\lambda, R^*} \big(-L, u - \lambda \big) \Big) \tau_{k-(i+j-1)} \big( v + \lambda, \chi \big) dv du.
   \label{eq:recur}
  \end{multline}
  \label{recurbeta}
\end{proposition}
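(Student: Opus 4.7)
The strategy is to decompose each configuration in $\mathcal{B}_k^{\lambda,R^*}(-L,\chi)$ according to its \emph{leftmost successful cluster} and to recognize the measure of each piece of the decomposition as the integrand of \eqref{eq:recur}. Given $(y_1,\dots,y_k)\in\mathcal{B}_k^{\lambda,R^*}(-L,\chi)$, I first partition $\{1,\dots,k\}$ into maximal runs with consecutive gaps at most $\lambda$, then select the leftmost such run whose total span is at least $R^*$. Call it $\{j,\dots,j+i-1\}$ and set $u=y_j$, $v=y_{j+i-1}$. By maximality of the run, either $j=1$ or $y_j-y_{j-1}>\lambda$, and either $j+i-1=k$ or $y_{j+i}-y_{j+i-1}>\lambda$; by leftmostness, the sub-tuple $(y_1,\dots,y_{j-1})$ contains no successful cluster of its own (a putative successful cluster inside the left block cannot merge with $\{j,\dots,j+i-1\}$ because $y_j-y_{j-1}>\lambda$, so its existence would contradict leftmostness).

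This canonical labeling partitions $\mathcal{B}_k^{\lambda,R^*}(-L,\chi)$ into disjoint measurable pieces indexed by $(i,j,u,v)$, up to a negligible set of configurations where some consecutive gap equals exactly $\lambda$. For fixed $(i,j,u,v)$, the corresponding piece is a product of three independent blocks: the left block $(y_1,\dots,y_{j-1})\in\mathcal{T}_{j-1}(-L,u-\lambda)\setminus\mathcal{B}_{j-1}^{\lambda,R^*}(-L,u-\lambda)$, of measure $\tau_{j-1}(-L,u-\lambda)-\beta_{j-1}^{\lambda,R^*}(-L,u-\lambda)$; the central chain $(y_j,\dots,y_{j+i-1})\in\mathcal{C}_i^\lambda(u,v)$, of measure $\gamma_i^\lambda(u,v)$; and the right block $(y_{j+i},\dots,y_k)\in\mathcal{T}_{k-i-j+1}(v+\lambda,\chi)$, of measure $\tau_{k-(i+j-1)}(v+\lambda,\chi)$. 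The right block carries no ``not in $\mathcal{B}$'' constraint, precisely because we single out the \emph{leftmost} successful cluster.

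Summing the measures of all pieces and applying Fubini gives the right-hand side of \eqref{eq:recur}. The ranges of the indices follow from the decomposition: $1\le j\le k-i+1$ positions the chosen cluster inside $\{1,\dots,k\}$, while $i\ge k_0=\lceil R^*/\lambda\rceil+1$ is forced by $(i-1)\lambda\ge v-u\ge R^*$. For the integration variables, $u\in[-L,\chi-R^*]$ ensures that $v\le\chi$ is feasible, and $v\in[u+R^*,\min(\chi,u+(k-1)\lambda)]$: here the sharp upper bound would be $u+(i-1)\lambda$, but since $\gamma_i^\lambda(u,v)=0$ whenever $v-u>(i-1)\lambda$, the uniform bound $u+(k-1)\lambda$ used in \eqref{eq:recur} is harmless.

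The main obstacle is the set-theoretic verification that the leftmost-successful-cluster decomposition actually partitions $\mathcal{B}_k^{\lambda,R^*}(-L,\chi)$. The delicate step is to check that the pair of conditions ``$y_j-y_{j-1}>\lambda$ (when $j>1$)'' and ``$(y_1,\dots,y_{j-1})\notin\mathcal{B}_{j-1}^{\lambda,R^*}(-L,u-\lambda)$'' together characterize leftmostness, so that different $(i,j,u,v)$ choices do not double count. Once this is settled, the factorization of each piece into three independent blocks and the computation of their measures are immediate from the definitions of $\tau$, $\gamma^\lambda$ and $\beta^{\lambda,R^*}$, and Fubini closes the argument.
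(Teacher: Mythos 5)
Your proposal is correct and follows essentially the same route as the paper: decompose $\mathcal{B}_k^{\lambda,R^*}$ according to the leftmost maximal chain of $\lambda$-close points spanning at least $R^*$, observe that the left block carries the ``$\notin\mathcal{B}_{j-1}$'' constraint while the right block is free, and integrate the three independent blocks. Your explicit discussion of why the decomposition is a genuine partition (the gap $>\lambda$ prevents a left-block cluster from merging with the selected one) and of the harmlessness of the $u+(k-1)\lambda$ upper bound matches, and slightly sharpens, the paper's argument.
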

\begin{proof}
 The idea is simple: we count each ``positive initial data'', that is an ordered $k$-uple $(y_i)_i$ such that a subfamily satisfies $y_m - y_l \geq R^*$ and $y_{i+1} - y_i \leq \lambda$ in between $l$ and $m$,
 according to its leftmost ``positive sub-family'', which is then taken of maximal length.
 
 We shall use the index $i$ to denote the length of this maximal family 
 (between $k_0$ and $k$), and $j$ its first rank ($1 \leq j \leq k-i+1$).
 Then,
 \beq
  \beta_k^{\lambda, R^*} (-L, \chi) = \int_{[-L, \chi]^k} \mathbb{1}_{\{ y_1 \leq y_2 \leq \dots \leq y_k \}} \mathbb{1}_{\{ (y_1, \dots, y_k) \in \mathcal{B}^{\lambda, R^*}_k (-L, \chi) \}} dy_1 \dots dy_k .
  \label{eq:carac}
 \eeq
 Now, we split:
 \begin{multline}
  \mathbb{1}_{\{ (y_1, \dots, y_k) \in \mathcal{B}^{\lambda, R^*}_k (-L, \chi) \}} = \sum_{i=k_0}^k \sum_{j=1}^{k-i+1} \mathbb{1}_{\{ y_{i+ j - 1} - y_{j} \geq R^* \}} \prod_{l = j}^{j + i - 2} \mathbb{1}_{\{y_{l+1} - y_l \leq \lambda \}}
  \\ \mathbb{1}_{\{ (y_1, \dots, y_{j-1}) \not\in \mathcal{B}^{\lambda, R^*}_{j-1} (-L, \chi) \}} \mathbb{1}_{\{ y_{j} - y_{j-1} > \lambda \}} \mathbb{1}_{\{ y_{i+ j} - y_{i+ j -1} > \lambda \}}.
  \label{eq:decomp}
 \end{multline}
 This identity requires some explanations.
 It comes directly from the partition of $\mathcal{B}$ using maximal leftmost positive sub-family, as described above.
 Then, the term $\mathbb{1}_{\{ (y_1, \dots, y_{j-1}) \not\in \mathcal{B}^{\lambda, R^*}_{j-1} (-L, \chi) \}}$
 simply comes from the definition of $\mathcal{B}$. 
 Since we consider the \textit{leftmost} positive subfamily, no family on its left should be positive.
 Moreover no element on its left can be added, which justifies the $\mathbb{1}_{\{y_j - y_{j-1} > \lambda \}}$.
 Then, we have in addition that for $j > 1$ and $y_j \leq \chi$,
 \[
  \mathbb{1}_{\{ (y_1, \dots, y_{j-1}) \not\in \mathcal{B}^{\lambda, R^*}_{j-1} (-L, \chi) \}} \mathbb{1}_{\{y_{j-1} \leq y_{j} \}} \mathbb{1}_{\{y_j - y_{j-1} > \lambda \}} 
  = \mathbb{1}_{\{ (y_1, \dots, y_{j-1}) \not\in \mathcal{B}^{\lambda, R^*}_{j-1} (-L, y_j - \lambda) \}},
 \]
  with the obvious convention that $\mathcal{B} (u, v) = \emptyset$ if $v < u$.

  In addition, for $i + j - 1 < k$
 \begin{align*}
   \int_{[-L, \chi]^{k-(i+j-1)}} &\mathbb{1}_{\{y_{i+j-1} \leq \dots \leq y_{k} \}} \mathbb{1}_{\{y_{i+j} - y_{i+j-1} > \lambda \}} dy_{i+j} \dots dy_k \\
& = \tau_{k-(i+j-1)}(y_{i+j-1}+\lambda,\chi)   \\
& = \f{ \big( \chi - y_{i+j-1} - \lambda \big)_+^{k-(i+j-1)} }{\big( k-(i+j-1) \big)!}.
  \end{align*}
  
  Combining these results, and using \eqref{eq:decomp} in \eqref{eq:carac} yields
  \begin{multline}
   \beta_k^{\lambda, R^*} (-L, \chi) = \sum_{i=k_0}^k \sum_{j=1}^{k-i+1} \int_{-L}^\chi \dots \int_{x_{i+j-2}}^\chi 
    \mathbb{1}_{\{ y_{j+i-1} - y_j \geq R^* \}} \prod_{l = j}^{j+i-2} \mathbb{1}_{\{ 0 \leq y_{l+1} - y_l \leq \lambda \}}
   \Big( \tau_{j-1} \big(-L, y_j - \lambda \big) -
   \\ \beta_{j-1}^{\lambda, R^*} \big(-L, y_j - \lambda \big) \Big) 
   \tau_{k-(i+j-1)} \big( y_{i+j-1} + \lambda, \chi \big) d y_{j} \dots d y_{i+j-1},
   \label{eq:interm}
   \end{multline}
  with conventions $\tau_0 = 1$ and $\beta_0 = 0$, regardless of their arguments.
  
  We assume $\chi \geq - L + R^*$ (otherwise $\beta_k^{\lambda,R^*} (-L, \chi) = 0$).
  Using the notation $\gamma$ we introduced, Equation~\eqref{eq:interm} simplifies again into:
  \begin{multline*}
   \beta_k^{\lambda, R^*} (-L, \chi) = \sum_{i=k_0}^k \sum_{j=1}^{k-i+1} \int_{-L}^{\chi-R^*} \int_{u+R^*}^{\min (\chi, u + (k-1) \lambda) } \gamma_i^{\lambda} (u, v)
   \\  \Big( \tau_{j-1} \big(-L, u - \lambda \big) - \beta_{j-1}^{\lambda, R^*} \big(-L, u - \lambda \big) \Big) \tau_{k-(i+j-1)} \big( v + \lambda, \chi \big) dv du,
  \end{multline*}
  where $u$ stands for $y_j$ and $v$ for $y_{i+j-1}$.
  This is our recursive formula \eqref{eq:recur}.
\end{proof}

Now, we may give an explicit formula $\gamma^{\lambda}_i (u, v)$.
We should notice that by definition,
\[
 \gamma^{\lambda}_{i+2} (u, v) = \int_u^{u + \lambda} \int_{u_1}^{u_1 + \lambda} \dots \int_{u_{i-1}}^{u_{i-1} + \lambda} \mathbb{1}_{v \geq u_i \geq v-\lambda} du_i \dots du_1,
\]
that is
\begin{equation}
 \gamma^{\lambda}_{i+2} (u, v) = \int_u^{u + \lambda} \gamma_{i+1}^{\lambda} (u_1, v) du_1.
\label{recurgamma}
\end{equation}

Hence, we deduce the recursive formula,
\begin{lemma}
For all $i, \lambda, u, v$ as above,
\begin{equation}
\gamma_{i+2}^{\lambda} (u, v) = \lambda^i + \sum_{k=1}^{i+1} \f{(-1)^k}{i!} \Big( \binom{i}{k-1} \big( v - u - k \lambda)_+^i + 
(-1)^{i+1} \binom{i-1}{k-1} \big( k \lambda - (v-u) \big)_+^i \Big).
\label{recurgamma2}
\end{equation}
\end{lemma}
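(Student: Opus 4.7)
By translation invariance, $\gamma_{i+2}^\lambda(u,v)$ depends only on $w:=v-u$; let $g_i(w):=\gamma_{i+2}^\lambda(u,v)$. The case $i=0$ gives $g_0(w) = \mathbb{1}_{\{0\leq w\leq \lambda\}}$ since $\mathcal{C}_2^\lambda(u,v)$ is a single (zero-dimensional) point when $w\in[0,\lambda]$ and empty otherwise. The recursion \eqref{recurgamma} then becomes
\[
g_i(w)=\int_0^\lambda g_{i-1}(w-t)\,dt,
\]
exhibiting $g_i$ as the $(i+1)$-fold self-convolution of $\mathbb{1}_{[0,\lambda]}$, supported in $[0,(i+1)\lambda]$. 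I treat \eqref{recurgamma2} as an identity for $w\geq 0$.

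\textbf{Induction plan.} I would prove \eqref{recurgamma2} by induction on $i\geq 1$. For the base case $i=1$, direct integration of $g_0$ yields the tent function equal to $w$ on $[0,\lambda]$ and $2\lambda-w$ on $[\lambda,2\lambda]$. The right-hand side of \eqref{recurgamma2} for $i=1$ collapses to $\lambda-(w-\lambda)_+-(\lambda-w)_++(w-2\lambda)_+$, which evaluates to the same tent function piecewise on $[0,\lambda]$, $[\lambda,2\lambda]$, and $[2\lambda,\infty)$.

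\textbf{Inductive step.} Assuming the formula for $g_{i-1}$, I integrate termwise. The constant $\lambda^{i-1}$ yields $\lambda^i$, and elementary antiderivatives give
\[
\int_0^\lambda (w-t-k\lambda)_+^{i-1}\,dt=\tfrac{1}{i}\bigl[(w-k\lambda)_+^i-(w-(k+1)\lambda)_+^i\bigr],
\]
\[
\int_0^\lambda (k\lambda-w+t)_+^{i-1}\,dt=\tfrac{1}{i}\bigl[((k+1)\lambda-w)_+^i-(k\lambda-w)_+^i\bigr].
\]
Relabeling so every summand is of the form $(w-k\lambda)_+^i$ or $(k\lambda-w)_+^i$ with $1\leq k\leq i+1$, the two contributions coming from indices $k$ and $k-1$ of the expansion of $g_{i-1}$ combine by Pascal's rule $\binom{i-1}{k-1}+\binom{i-1}{k-2}=\binom{i}{k-1}$ to produce the target coefficient $\frac{(-1)^k}{i!}\binom{i}{k-1}$. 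An identical calculation for the $(k\lambda-w)_+^i$ terms yields $\binom{i-1}{k-1}$ via Pascal applied to $\binom{i-2}{\cdot}$, and the factor $-1$ arising from the order of the limits of integration converts the sign $(-1)^{(i-1)+1}=(-1)^i$ carried by $g_{i-1}$ into the required $(-1)^{i+1}$.

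\textbf{Main obstacle.} The only real difficulty is the bookkeeping at the boundary indices $k=1$ and $k=i+1$, where one of the two Pascal contributions is absent: for $k=1$ we have $\binom{i-1}{-1}=0$, leaving only the $k'=k$ contribution whose surviving binomial $\binom{i-1}{0}$ equals the target $\binom{i}{0}=1$; symmetrically for $k=i+1$, only $k'=k-1$ survives, with $\binom{i-1}{i-1}=\binom{i}{i}=1$. Once these edge indices are tracked carefully the inductive step closes coefficient-by-coefficient, proving \eqref{recurgamma2}.
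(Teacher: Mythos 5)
Your proof is correct and follows essentially the same route as the paper's: verify the base case $\gamma_3^\lambda$ (the tent function) and then induct via the convolution recursion \eqref{recurgamma}. The paper merely asserts that ``using \eqref{recurgamma} again proves \eqref{recurgamma2} by induction,'' whereas you carry out the termwise integration, the reindexing, and the Pascal-rule recombination (including the boundary indices $k=1$ and $k=i+1$) that the paper omits.
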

\begin{proof}
 Obviously, $\gamma_2^\lambda (u, v)= \mathbb{1}_{v \geq u \geq v-\lambda}$ and we deduce from \eqref{recurgamma}
 \[
  \gamma_3^{\lambda} (u, v) = \lambda + \big ( v - u - 2 \lambda \big)_+ - \big(\lambda - (v-u) \big)_+ - \big( v-u -\lambda \big)_+
 \]
  Then, using \eqref{recurgamma} again proves \eqref{recurgamma2} by induction.
\end{proof}

\begin{remark}
\label{hatL}
For $k < 2 k_0$, formula \eqref{eq:recur} simplifies a lot for it is no longer recursive.
It enables us to compute $\beta_{k_0}^{\lambda, R^*} (-L, L)$.

\beq
 \beta_{k_0}^{\lambda, R^*} (-L, L) = \int_{-L}^{L - R^*} \int_{u+R^*}^{\min(L, u + (k_0-1) \lambda)} \gamma_{k_0}^{\lambda} (u, v) dv du.
\label{eq:k0proba}
\eeq
Then by \eqref{recurgamma2} we know $\gamma_{k_0}^{\lambda} (u, v)$. 
With the change of variables $w = v + u$, when $L > - L + (k_0 - 1) \lambda$, equation \eqref{eq:k0proba} becomes
\begin{multline}
 \beta_{k_0}^{\lambda, R^*} (-L, L) =
 \int_{-L}^{L - (k_0 - 1) \lambda} \int_{R^*}^{(k_0 - 1) \lambda} \Big( \lambda^{k_0-2} +
 \\ \sum_{k=1}^{k_0 - 1} \f{(-1)^k}{(k_0 - 2) !} \big( \binom{k_0 - 2}{k - 1} (w- k\lambda)_+^{k_0-2} + (-1)^{k_0 - 1} \binom{k_0 - 3}{k-1} (k \lambda - w)_+^{k_0-2} \big) \Big) dw du
 \\ + \int_{L - (k_0 - 1) \lambda}^{L - R^*} \int_{R^*}^{L - u} \gamma_{k_0}^{\lambda} (u, u+w) dw du.
 \label{bigRHS}
\end{multline}

Clearly, the first integral in the right-hand side of \eqref{bigRHS} may be written as
\[
 \big( 2 L - (k_0 - 1) \lambda \big)f_1 (\lambda, R^*),
\]
where $f_1$ does not depend on $L$.
With the change of variables $z = L - u$, the second term in the right-hand side  of \eqref{bigRHS} becomes
\begin{multline*}
f_2 (\lambda, R^*) := \int^{(k_0 - 1) \lambda}_{R^*} \int_{R^*}^{z} \Big( \lambda^{k_0-2} + \sum_{k=1}^{k_0-1} \f{(-1)^k}{(k_0 - 2)!}  \big( \binom{k_0-1}{k-1} (w- k\lambda)_+^{k_0-2} \\+ 
 (-1)^{k_0-1}\binom{k_0 - 3}{k-1} (k \lambda - w)_+^{k_0-2}\big) \Big) dw dz.
\end{multline*}
In particular, it appears that it does not depend on $L$. (Recall that by definition, $k_0~=~\llceil \f{R^*}{\lambda} \rrceil~+~1$). 

For $\chi \in (-L + R^*, -L + (k_0 - 1) \lambda)$, we can compute similarly
\[
	\beta_{k_0}^{\lambda, R^*} (-L, \chi) = \int_{R^*}^{\chi - (-L)} \int_{R^*}^z \gamma_{k_0}^{\lambda} (0, w) dw dz,
\]
and notice that our expressions are consistent since
\[
	\beta_{k_0}^{\lambda, R^*} (-L, -L + (k_0 - 1 )\lambda) = \int_{R^*}^{-L + (k_0 - 1) \lambda - (-L)} \int_{R^*}^z \gamma_{k_0}^{\lambda} (0, w) dw dz = f_2 (\lambda, R^*).
\]
All in all, $\beta_{k_0}$ is expressed as follows:
\begin{equation}
\beta_{k_0}^{\lambda, R^*} (- L, \chi) =  \left\{ \begin{array}{l} 0 \text{ if } \chi + L \leq R^*
\\[10pt]
\displaystyle\int_{R^*}^{\chi - (-L)} \int_{R^*}^z \gamma_{k_0}^{\lambda} (0, w) dw dz \text{ if } \chi  + L \in (R^*, (k_0 - 1 ) \lambda),
\\[10pt]
\big(\chi + L - (k_0 - 1 )\lambda ) \big) f_1  + f_2  \text{ if } \chi +L > (k_0 - 1 ) \lambda
\end{array} \right.
\end{equation}
(This is an affine function for $\chi + L > (k_0 - 1 ) \lambda$, with pent $f_1 (\lambda, R^*)$).

Then, we obtain a bound on the probability of success with $k_0$ (the minimal number of) releases after dividing by $\tau_{k_0} (-L, L)$ :
\[
 P_{k_0} (L) \geq \f{\beta_{k_0}^{\lambda, R^*}}{\tau_{k_0}} (-L, L) = \f{k_0 !}{(2L)^{k_0}} \big((2L - (k_0 - 1) \lambda)f_1 (\lambda, R^*) + f_2 (\lambda, R^*) \big).
\]

In particular, we see that this underestimation of the success probability is increasing and then decreasing, and thus reaches a unique maximum at $L = \widehat{L}$.

We find 
\[
  2 \widehat{L} = \lambda \big( \llceil \f{R^*}{\lambda} \rrceil + 1 \big) - \f{k_0}{k_0-1} \f{f_2 (\lambda, R^*)}{f_1 (\lambda, R^*)}.
\]
We may note that introducing the non-negative and non-decreasing function 
\[
\Gamma^{\lambda, R^*}_k (z) := \int_{R^*}^z \gamma^{\lambda}_k (0, w) dw 
\]
we get
\begin{align*}
 f_1 (\lambda, R^*) &= \Gamma^{\lambda, R^*}_{k_0} \big( (k_0 - 1) \lambda \big), \\
 f_2 (\lambda, R^*) &= \int_{R^*}^{(k_0-1) \lambda} \Gamma^{\lambda, R^*}_{k_0} (z) dz.
\end{align*}
As a consequence, $f_2 \leq \big( (k_0-1)\lambda - R^* \big) f_1$ and thus
\[
 2 \widehat{L} \geq \f{k_0}{k_0 - 1} R^*.
\]

\end{remark}

\begin{remark}
 Back to problem \eqref{simpleproba}, we recover the problem of estimating $\beta$ with the notations of Proposition \ref{recurbeta} through a simple change of variables.
 We divide all positions ($x_1, \dots, x_k$) by $\sqrt{2 \sigma}$.
 Then in the right-hand side of \eqref{simpleproba} we replace  $2 L^*$ by 
  $$R^* := \min_{\alpha} \int_0^{\alpha} \f{dv}{\sqrt{F(\alpha) - F(v)}}\mbox{ ,}$$ and $2 \sqrt{2 \log(2) \sigma}$ by $\lambda := 2 \sqrt{\log(2)}$.
  This was done in order to simplify computations.
  Moreover, it shows that the success probabilities do not depend on diffusivity.
  In fact, scaling in $\sigma$ as we did merely amounts at choosing a space scale such that $\sigma = 1$.
  Even though probabilities themselves do not make $\sigma$ appear, one must keep in mind
  that the corresponding release protocols (including the space between release points or the size of the release box) are proportional to $\sqrt{\sigma}$.
\end{remark}

\section{Numerical results}
\label{sec:num}

Now, we present some numerical results we obtained on this set of release profiles.
Numerical simulations confirm the intuition of Proposition \ref{prop:coupon}.
Our under-estimation is not very bad. Indeed, as one increases the number of release points ($k$) in a fixed perimeter, with a fixed number of mosquitoes per release, 
then our under-estimation of the probability of success converges to $1$.

\begin{figure}[h]
\centering
 \includegraphics[width=\textwidth]{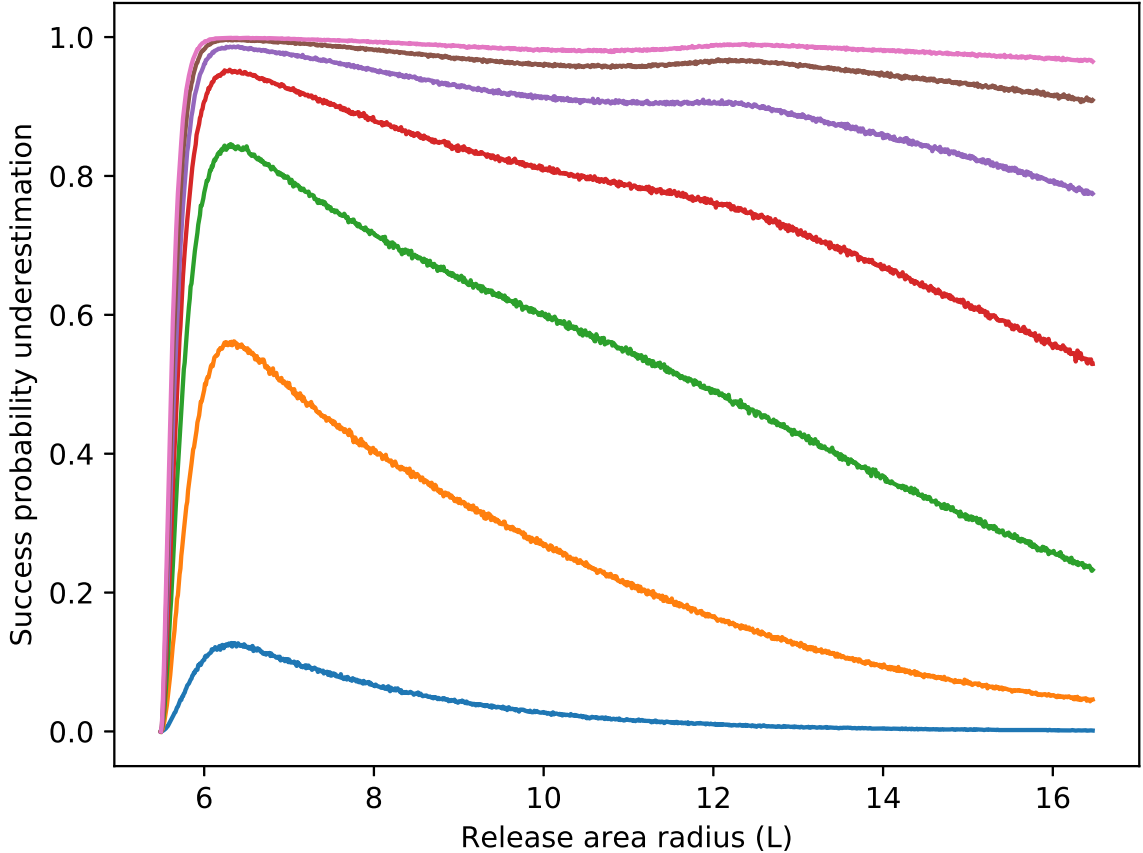}
 \caption{Under-estimation $\beta^{\lambda, R^*} (-L, L)$ of introduction success probability for $L$ ranging from $R^*/2 = 5.49$ to $3 R^*/2 = 16.47$. 
 The seven curves correspond to increasing number of release points. (From bottom to top: $20$ to $80$ release points).}
 \label{fig:80-40-20}
\end{figure}

\begin{figure}[h]
\centering
 \includegraphics[width=\textwidth]{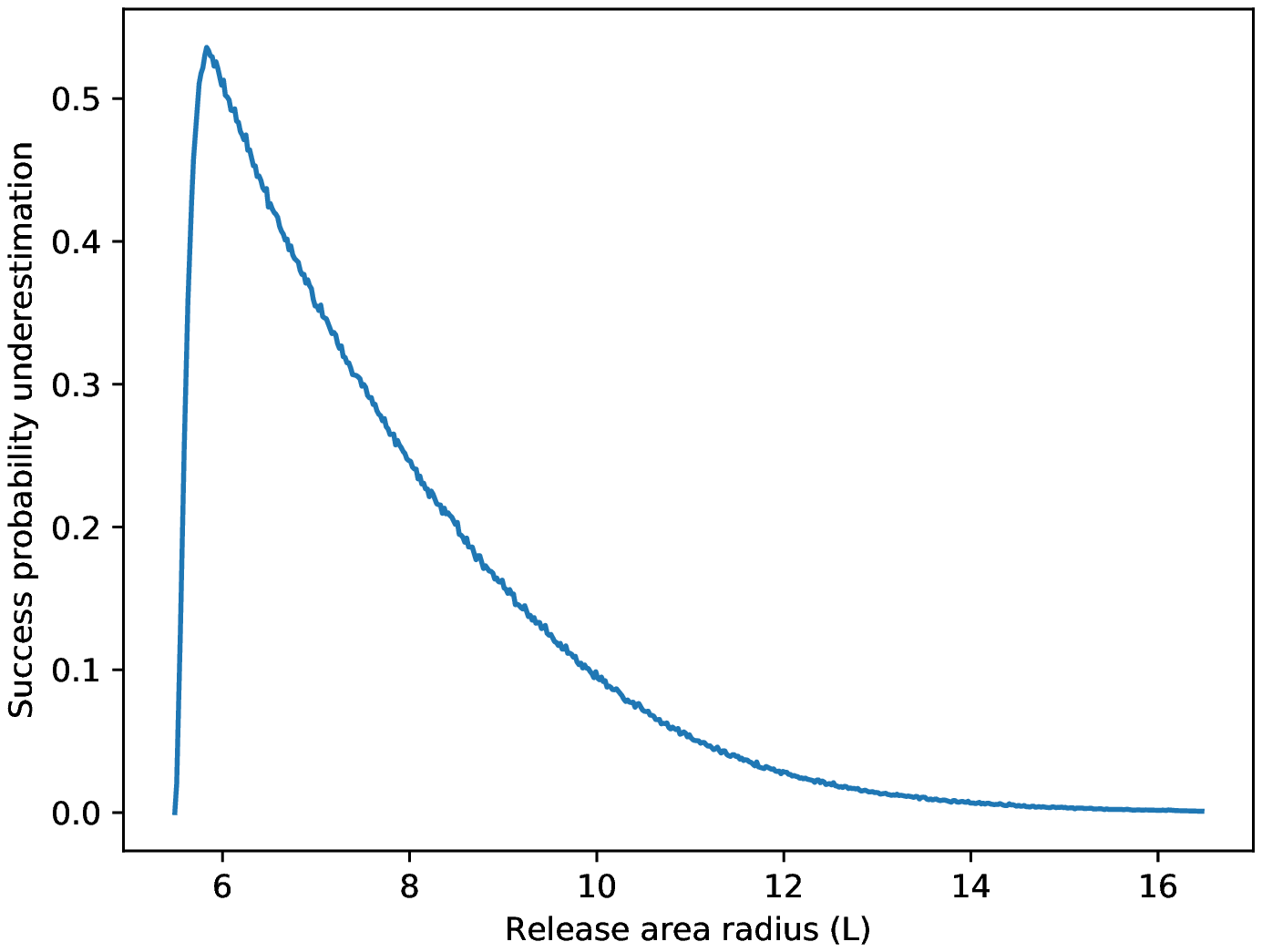}
 \caption{Effect of losing the constant $2 \sqrt{2 \log(2)}$ in Proposition \ref{prop:geom}: under-estimation $\beta^{\lambda, R^*} (-L, L)$ of introduction success probability for $L$ ranging from $R^*/2 = 5.49$ to $3 R^*/2 = 16.47$, with $80$ release points.}
 \label{fig:without}
\end{figure}

Figure \ref{fig:80-40-20} shows the probability profile as a function of the size $L$ of the release box, for $20$, $40$ and $80$ release points. (Here, $R^* = 10.981$, $\lambda = 1.665$ and thus $k_0 = 8$.)
The curves are obtained by a simple Monte-Carlo method.
They lead to the appearance an optimal size for the release box ($6.3$ in this example), that does not seem to depend on the number of release points between $20$ and $80$.

However, for small (relatively to $k_0$) numbers of releases, the probabilities are very small.
In the case of $10$ release points, the maximal probability we find is about $1. 10^{-5}$.

Our numerical values are somehow consistent with field experiments 
(typically, the space between release points is less than $\lambda \sqrt{2 \sigma}$, 
which is about $68 \mathrm{m}$, and the optimal box size is approximately equal to 
$6.3 \times \sqrt{2 \sigma} \simeq 257 \mathrm{m}$).

The factor $2 \sqrt{2 \log(2)}$ is crucial with this respect. 
Losing it changes $\lambda$ from $2\sqrt{ \log(2)} \simeq 1.665$ to $1/\sqrt{2} \simeq 0.707$ and makes $k_0$ (``the minimal theoretical number of releases to make our 
under-estimation of the probability of success positive'') 
increase from $8$ to $17$.
We show in Figure \ref{fig:without} the probability profile for $80$ releases in this case, to illustrate the loss with this ``worse'' geometric estimation.
It culminates at around $50 \%$ only and is comparable with the green curve (for $40$ release points) of Figure \ref{fig:80-40-20}.

\section{Conclusion and Perspectives}
\label{sec:conclusion}

We considered spatial aspects of a biological invasion mechanism associated to release programs and their uncertainty.
We validated the framework in the one-dimensional case, and the two-dimensional case is the natural extension. 

Two difficulties must be tackled in higher dimensions.
First, the radially-symmetric ``$\alpha$-bubbles'' may still exist, but we no longer have an exact formula like \eqref{def:lalpha} for their support.
Second, the geometric problem underlying our estimation gets harder, but not impossible to manage.
To deal with it, we need an analogue of Proposition \ref{prop:geom} in order to get a lower bound for a sum of Gaussians in two dimensions.

An interesting feature of the approach we introduced is that it can be extended to cases when neither sub-solutions nor geometric properties are available.
Heuristically, we need first a criterion to tell us if a given initial data belongs to a ``set of interest''.
Second, we need to put a probability measure on the set of ``feasible initial data''.
Combining these, we compute the probability that the criterion is satisfied.
This probability gives an insight into the role any given aspect of the release protocol plays.

We used a sufficient condition for invasion, the criterion from Theorem~\ref{thm:invasion}.  
However, we proved that our under-estimation of probability is rather good: in particular, it converges to $1$ when the number $k$ of releases goes to $\infty$.
This fact is the object of Proposition~\ref{prop:coupon},  holds true in any dimension, and is supported by numerical simulations in dimension $1$.

We have always considered a homogeneous ``context of introduction'', so that the stochasticity would only affect the release process itself.
Another natural continuation of this work, trying to go further into spatial stochasticity for release protocols, is the use of other stochastic parameters, such as the diffusion process (here it is given by a deterministic diffusivity $\sigma$), or the local carrying capacity. We let this open for further research.

Some other questions remain open. For instance: 
in one dimension, we considered releases in $[-L, L]$. We know that if $2 L < L^*$ then our condition in the right-hand side of \eqref{simpleproba} is zero.
On the other hand, this right-hand side goes to $0$ as $L \to + \infty$.
This suggests that there exists a (non-necessarily unique) size $\widehat{L}$ that maximizes this right-hand side.
Back to \eqref{eq:k0proba}, we obtained in Remark \ref{hatL} a lower bound for $\widehat{L}$ in this case:
\beq
\widehat{L} \geq R^* \f{ 1 + \llceil \f{R^*}{\lambda} \rrceil}{\llceil \f{R^*}{\lambda} \rrceil} .
\eeq
It is a numerical conjecture that the optimal value of $L$ is close to $\f{1}{2} ( \lambda + R^*)$ for any $k$.
For this particular protocol feature (the optimal size of the release area), 
our approach already provides an interesting indication which 
- to the best of our knowledge - has not been used in previous release experiments.

As a possible follow-up to this work, one can set up several optimization problems.
 First, on a purely theoretical side, how to optimize the threshold functions in Theorem \ref{thm:invasion} with respect to a cost functional such as the $L^1$ norm (for the total number of released mosquitoes)?
 Then, if we fix a cost, how to maximize the under-estimated probability of success with respect to the size of the release area?
 Ultimately, how to optimize a release protocol (playing on the probability law of the release profiles space)?

\appendix

\section*{Appendix: Uniqueness of the minimal radius}
\label{uniqueness}

In this appendix we investigate sufficient conditions for the uniqueness of a minimal radius among the $\alpha$ bubbles we constructed in Section \ref{sec:1D}.
More precisely, we establish the number of bubbles of a given radius (which is typically $2$). General results in any dimension on the exact multiplicity of solutions for such problems (semilinear elliptic Dirichlet problems) have been obtained in \cite{OuyShi.ExactI} and \cite{OuyShi.ExactII}, so in essence the results below are not new and are even contained in the cited articles. 

However we emphasize that our proof, limited to dimension $1$, uses very simple arguments and even provides an equivalent formulation of the problem in terms of a single real function $h$ built from $f$ and $F$, see formula \eqref{eq:hExact} below.

Let $f \in \mathcal{C}^2 ([0, 1], \R)$ be a bistable function in the sense of \eqref{hyp:bistab} 
and $F (x) = \int_0^x f(y) dy$ its antiderivative as introduced in \eqref{def:F}.

We make the following assumptions:
\begin{align}
 & f'(0) < 0, \quad f'(\theta) > 0, \quad f'(1) < 0, \tag{B0} \label{bist:0} \\
 &F(1) > 0, \tag{B1} \label{bist:1} \\
 & \forall x \in [0, 1], \quad \big( f'(x) + x f''(x) \big) f(x) \leq x \big( f'(x) \big)^2. \tag{B2} \label{bist:2}
\end{align}
Under assumption \eqref{bist:1}, there exists a unique $\theta_c \in (\theta, 1)$ such that $F(\theta_c) = 0$.
We introduce 
\beq
g(x) := x f'(x) / f(x).
\eeq

\begin{lemma}
Under assumption \eqref{bist:0}, \eqref{bist:2},
$g$ is decreasing on $[0, \theta)$ and on $(\theta, 1]$.
  In addition, $g(0) = 1$, $g(\theta_-) = - \infty$, $g(\theta_+) = + \infty$ and $g(1) = - \infty$.
As a consequence, there exists a unique $\alpha_1 \in (\theta, 1)$ such that
\[
 g(\alpha_1) = 1.
\]
\label{lem:varg}
\end{lemma}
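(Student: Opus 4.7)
The plan is to analyze the auxiliary function $g(x) = xf'(x)/f(x)$ by direct computation, splitting the work into three parts: computing the boundary/singular limits, establishing monotonicity on each interval via \eqref{bist:2}, and concluding existence and uniqueness of $\alpha_1$ by the intermediate value theorem.

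First, I would compute the four limits using Taylor expansions at the three zeros of $f$ together with the sign information from \eqref{bist:0}. Near $x=0$, $f(x) = f'(0) x + O(x^2)$ with $f'(0)\neq 0$, giving $g(x) \to f'(0)/f'(0) = 1$, which defines $g(0) = 1$ by continuous extension. Near $x=\theta$, the expansion $f(x) = f'(\theta)(x-\theta) + O((x-\theta)^2)$ with $f'(\theta)>0$ shows that $f$ changes sign from negative to positive across $\theta$, while the numerator $xf'(x) \to \theta f'(\theta) > 0$; therefore $g(\theta_-) = -\infty$ and $g(\theta_+) = +\infty$. Near $x = 1$, the assumption $f'(1) < 0$ combined with $f(1) = 0$ implies $f > 0$ just to the left of $1$, while $xf'(x) \to f'(1) < 0$, yielding $g(1_-) = -\infty$.

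Next, I would establish monotonicity by differentiating:
\[
g'(x) = \frac{\bigl(f'(x) + x f''(x)\bigr) f(x) - x \bigl(f'(x)\bigr)^2}{f(x)^2}.
\]
Assumption \eqref{bist:2} states exactly that the numerator is non-positive on $[0,1]$, so $g'(x) \leq 0$ wherever $f(x) \neq 0$, i.e.\ on $[0,\theta) \cup (\theta,1]$. Hence $g$ is (weakly) decreasing on each of these intervals. To upgrade to strict monotonicity, which is what is actually needed for uniqueness of $\alpha_1$, I would observe that $g \equiv c$ on a subinterval $J \subset (\theta,1)$ would integrate to $f(x) = K x^c$ on $J$, contradicting the boundary behaviour $f(1) = 0$ with $f'(1) < 0$.

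Finally, since $g$ is continuous and strictly decreasing from $+\infty$ to $-\infty$ on $(\theta,1)$, the intermediate value theorem yields a unique $\alpha_1 \in (\theta,1)$ with $g(\alpha_1) = 1$. The main obstacle in this proof is the strict-monotonicity step just described; everything else is a careful sign-tracking exercise using Taylor expansions and the hypotheses \eqref{bist:0}--\eqref{bist:2}.
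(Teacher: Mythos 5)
Your computation of the limits and of $g'$ is exactly the ``straightforward computation'' the paper invokes without writing out: the Taylor expansions at $0$, $\theta$ and $1$ combined with the signs in \eqref{bist:0} give $g(0)=1$, $g(\theta_-)=-\infty$, $g(\theta_+)=+\infty$, $g(1^-)=-\infty$, and the quotient rule shows that \eqref{bist:2} is precisely the statement that the numerator of $g'$ is nonpositive, so $g$ is non-increasing on $[0,\theta)$ and on $(\theta,1]$. That part is correct and is more detail than the paper provides.

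The gap is in your upgrade to strict monotonicity, which you rightly identify as the crux for uniqueness. If $g\equiv c$ on a subinterval $J=[a,b]$ strictly contained in $(\theta,1)$, you correctly integrate $f'/f=c/x$ to get $f(x)=Kx^{c}$ on $J$, but this does not contradict $f(1)=0$, $f'(1)<0$: the identity holds only on $J$, and $f$ is merely $\mathcal{C}^2$, not analytic, so nothing propagates the formula up to $x=1$. Worse, the relevant case $c=1$, i.e.\ $f(x)=Kx$ with $K>0$ on an interior subinterval of $(\theta,1)$, is not excluded by \eqref{hyp:bistab}, \eqref{bist:0} or \eqref{bist:2} (the latter holds with equality there), and for such an $f$ the level set $\{g=1\}$ is a nondegenerate interval, so uniqueness of $\alpha_1$ genuinely fails under the stated hypotheses. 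The honest fix is to require strict inequality in \eqref{bist:2} at points where $xf'(x)=f(x)$ (equivalently, to exclude $f$ being linear through the origin on a subinterval of $(\theta,1)$); with that, $g'<0$ where it matters and your intermediate value argument closes the proof. The paper's own proof is literally the phrase ``straightforward computation,'' so it does not confront this point either.
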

\begin{proof}
 straightforward computation.
\end{proof}

We add the following assumption:
\beq
\forall \alpha > \max(\theta_c, \alpha_1), \quad F(\alpha) \big( f(\alpha) + \alpha f'(\alpha) \big) \leq \alpha \big( f(\alpha) \big)^2.
\tag{B3}
\label{bist:3}
\eeq

Now, we recall the $\alpha$-bubble radius, as introduced before, for $\alpha \in (\theta_c, 1]$:
 \[
  L_{\alpha} = \sqrt{\sigma} \int_0^\alpha \f{dv}{\sqrt{2 \big( F(\alpha) - F(v) \big)}}.
 \]

\begin{proposition}
\label{prop:uniqueness}
Under conditions \eqref{bist:0}, \eqref{bist:1}, the bistable (in the sense of \eqref{hyp:bistab}) function $f$ is such that
$L_{\alpha}$ reaches its minimum on $(\theta_c, 1]$ (which is well-defined) at points in $(\theta_c, 1)$.

If in addition \eqref{bist:2}, \eqref{bist:3} hold,
 then there exists a unique $\alpha_0 \in (\theta_c, 1)$ such that $$L_{\alpha_0} = \min_{\alpha} L_{\alpha}.$$

\end{proposition}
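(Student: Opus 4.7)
The plan is to combine endpoint asymptotics of $L_\alpha$, a derivative formula for $L_\alpha'(\alpha)$ involving an auxiliary function $H(v) := 2F(v) - vf(v)$, and the equivalent reformulation of \eqref{bist:3} as monotonicity of $H/F$, in order to (i) locate the minimum in $(\theta_c, 1)$, (ii) narrow the set of possible minimizers to a short subinterval, and (iii) rule out multiple critical points on this subinterval.

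First I would handle existence. The function $L_\alpha$ is continuous on $(\theta_c, 1)$. As $\alpha \downarrow \theta_c$, one has $F(\theta_c) = 0$ and $F(v) \sim f'(0) v^2 / 2$ near $v = 0$ (with $f'(0) < 0$ by \eqref{bist:0}), so the integrand $1/\sqrt{F(\alpha) - F(v)}$ blows up like $1/v$, which is not integrable; hence $L_\alpha \to +\infty$. As $\alpha \uparrow 1$, bistability forces $f(1) = 0$, so $F(1) - F(v) \sim -f'(1)(1-v)^2/2$ near $v = 1$ (with $f'(1) < 0$ by \eqref{bist:0}), producing a $1/(1-v)$ singularity, so $L_1 = +\infty$. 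Continuity then forces the minimum of $L_\alpha$ to be attained in the interior of $(\theta_c, 1)$, settling the first part of the proposition.

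Next I would derive the derivative formula. Substituting $v = \alpha s$ in \eqref{def:lalpha}, differentiating under the integral (justified by a dominating $(1-s)^{-1/2}$-type bound) and reverting to the variable $v$ yields, after a short calculation in which the numerator simplifies using $H(v) := 2F(v) - v f(v)$,
\[
\frac{dL_\alpha}{d\alpha} = \frac{\sqrt{\sigma}}{2\sqrt{2}\,\alpha}\int_0^\alpha \frac{H(\alpha) - H(v)}{(F(\alpha) - F(v))^{3/2}}\,dv.
\]
Since $H'(v) = f(v)\bigl(1 - g(v)\bigr)$ with $g$ as in Lemma \ref{lem:varg}, condition \eqref{bist:2} together with the sign of $f$ implies that $H$ is decreasing on $(0, \alpha_1)$ and increasing on $(\alpha_1, 1]$; with $H(0) = 0$ and $H(1) = 2F(1) > 0$, there is a unique $\alpha_2 \in (\alpha_1, 1)$ with $H(\alpha_2) = 0$, $H < 0$ on $(0, \alpha_2)$ and $H > 0$ on $(\alpha_2, 1]$. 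This immediately gives $L_\alpha'(\alpha) > 0$ for $\alpha \geq \alpha_2$ (integrand non-negative and strictly positive on a set of positive measure) and $L_\alpha'(\alpha) < 0$ on $(\theta_c, \alpha_1)$ when this interval is non-empty; hence any minimizer lies in $[\max(\theta_c, \alpha_1), \alpha_2)$.

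The main obstacle is the uniqueness step on $[\max(\theta_c, \alpha_1), \alpha_2)$, where \eqref{bist:3} enters. A direct computation of $\frac{d}{d\alpha}\bigl(H(\alpha)/F(\alpha)\bigr)$ shows that \eqref{bist:3} is equivalent to $\alpha \mapsto H(\alpha)/F(\alpha)$ being non-decreasing on $(\max(\theta_c, \alpha_1), 1]$. The plan is to exploit this monotonicity to package $L_\alpha'(\alpha)$ in the form $C(\alpha)\,h(\alpha)$, where $C(\alpha) > 0$ and $h$ is an explicit real function built from $f$ and $F$ (the single-function reformulation mentioned in the introduction of the appendix), whose zeros coincide with those of $L_\alpha'$ and which is itself monotone; uniqueness of the critical point, hence of the minimizer $\alpha_0$, then follows by combining monotonicity of $h$ with the boundary sign information established in the previous paragraph. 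The delicate point is that $F(v)$ changes sign at $v = \theta_c$, so one has to split the integral defining $L_\alpha'$ at $\theta_c$ and carefully track signs of $F$ and $H$ in each subregion before invoking \eqref{bist:3} to compare $H(\alpha)/F(\alpha)$ with $H(v)/F(v)$; matching contributions on both sides of $\theta_c$ so that they combine into a single monotone $h$ is the main technical difficulty.
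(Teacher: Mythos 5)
Your treatment of the first assertion and your reduction of the uniqueness problem are sound. The endpoint blow-up of $L_\alpha$ at $\theta_c^+$ and at $1^-$ does place the minimum in the interior (the paper argues instead via the blow-up of $\frac{d}{d\alpha}L_\alpha$ at the two endpoints, but both routes work); your derivative formula
\[
\frac{dL_\alpha}{d\alpha}=\frac{\sqrt{\sigma}}{2\sqrt{2}\,\alpha}\int_0^\alpha\frac{H(\alpha)-H(v)}{\big(F(\alpha)-F(v)\big)^{3/2}}\,dv,\qquad H(v)=2F(v)-vf(v),
\]
checks out; the variations of $H$ do follow from Lemma~\ref{lem:varg}; and your observation that \eqref{bist:3} is exactly the statement that $H/F$ is non-decreasing on $(\max(\theta_c,\alpha_1),1]$ is correct. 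The resulting localization of any minimizer to $[\max(\theta_c,\alpha_1),\alpha_2)$ is genuinely different from, and in some respects cleaner than, what the paper does.

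The gap is the uniqueness step itself, which is the heart of the proposition and which you leave as a ``plan'' whose key ingredient --- packaging $L_\alpha'$ as $C(\alpha)h(\alpha)$ with $h$ monotone and ``matching contributions on both sides of $\theta_c$'' --- you explicitly flag as an unresolved difficulty. Monotonicity of $H/F$ does not by itself control the sign of $\int_0^\alpha (H(\alpha)-H(v))(F(\alpha)-F(v))^{-3/2}dv$ as $\alpha$ varies: the natural rewriting $H(\alpha)-H(v)=F(\alpha)\tfrac{H(\alpha)}{F(\alpha)}-F(v)\tfrac{H(v)}{F(v)}$ degenerates near $v=\theta_c$ where $F$ vanishes and changes sign, and you produce no candidate for $h$. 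For comparison, the paper resolves this by normalizing differently: after subtracting the explicit singular profile $1/\sqrt{f(\alpha)(\alpha-v)}$ and rescaling $v=\alpha w$, the critical points of $L_\alpha$ become the solutions of $h(\alpha)=-2$ with
\[
h(\alpha)=\int_0^1\Big(\frac{1}{(1-w)^{3/2}}-\Big(\frac{\alpha f(\alpha)}{F(\alpha)-F(\alpha w)}\Big)^{3/2}\Big)dw,
\]
and $h'>0$ is obtained by showing that $z(\alpha,w)=(f(\alpha)+\alpha f'(\alpha))(F(\alpha)-F(\alpha w))-\alpha f(\alpha)(f(\alpha)-wf(\alpha w))$ is nonpositive; here $\partial_w z=\alpha f(\alpha w)f(\alpha)\big(g(\alpha w)-g(\alpha)\big)$ brings in \eqref{bist:2} through Lemma~\ref{lem:varg}, and the boundary value $z(\alpha,0)\le 0$ is precisely \eqref{bist:3} (indeed $\frac{d}{d\alpha}(H/F)=-z(\alpha,0)/F^2$, so your $H/F$ remark identifies the right role for \eqref{bist:3}). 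Without an analogue of this $\partial_w z$ analysis, or some other single-crossing argument on $[\max(\theta_c,\alpha_1),\alpha_2)$, your proof of uniqueness is not complete.
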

\begin{remark}
 Although assumptions \eqref{bist:0} and \eqref{bist:1} are very general, \eqref{bist:2} and \eqref{bist:3} are debatable.
 They yield a simple sufficient condition for uniqueness of minimum (which is the object of Proposition \ref{prop:uniqueness}),
 but are by no means necessary to get it.
 We expect that they can be refined and improved in order to get uniqueness for a wider class of bistable functions.
 
 However, typical reaction terms in the setting of \textit{Wolbachia} easily satisfy these assumptions,
 and since they are easy to check on any given reaction term, we are happy with them.
\end{remark}

\begin{proof}
 Without loss of generality we assume $\sqrt{\sigma} = \sqrt{2}$ to get rid of the constant.
 From \eqref{def:lalpha}, we deduce the equivalent expression:
 \begin{align*}
  L_{\alpha} &= \int_0^{\alpha} \big( \f{1}{\sqrt{F(\alpha) - F(v)}} - \f{1}{\sqrt{f(\alpha)(\alpha - v)}}\big) dv + \int_0^{\alpha} \f{dv}{\sqrt{f(\alpha)(\alpha - v)}}\\
  &= \f{1}{\sqrt{f(\alpha)}} \Big( \int_0^{\alpha} \big( \f{\sqrt{f(\alpha)}}{\sqrt{F(\alpha) - F(v)}} - \f{1}{\sqrt{\alpha - v}} \big) dv + 2 \sqrt{\alpha} \Big)
 \end{align*}
  Hence
 \[
  \f{d}{d \alpha} L_{\alpha} = \f{1}{\sqrt{\alpha f(\alpha)}} + \f{1}{2 \sqrt{f(\alpha)}} \int_0^{\alpha} \Big(\f{1}{(\alpha - v)^{3/2}} - \big( \f{f(\alpha)}{F(\alpha) - F(v)} \big)^{3/2} \Big) dv,
 \]
 which is a continuous function from $(\theta_c, 1)$ to $\R$. It is easily seen that $\f{d}{d\alpha} L_{\alpha}$ goes to $- \infty$ as $\alpha \to \theta_c^+$, and to $+ \infty$ as $\alpha \to 1^-$ (recalling $f(1) = 0$).
 Therefore, we know that $L_{\alpha}$ reaches its minimum (which is well-defined) at points strictly in the interior of $(\theta_c, 1)$.
 This is the first point of Proposition \ref{prop:uniqueness}.
  
 Then, $\f{d}{d \alpha} L_{\alpha} = 0$ if and only if
 \[
  \f{1}{\sqrt{\alpha}} + \f{1}{2} \int_0^{\alpha} \Big( \f{1}{(\alpha - v)^{3/2}} - \big( \f{f(\alpha)}{F(\alpha) - F(v)} \big)^{3/2} \Big)dv = 0.
 \]
  
  For $\alpha \in (\theta_c, 1)$, we introduce
  \begin{equation}
   h(\alpha) := \int_0^1 \Big( \f{1}{(1 - w)^{3/2}} - \big( \f{\alpha f(\alpha)}{F(\alpha) - F(\alpha w)} \big)^{3/2} \Big) dw.
   \label{eq:hExact}
  \end{equation}

  Then $\f{d}{d \alpha} L_{\alpha} = 0$ if and only if $h(\alpha) = -2$.
  In addition, $h(\theta_c) = - \infty$ and $h(1)= +\infty$ are well-defined by continuity.
  
  We compute 
  \begin{multline}
   h'(\alpha) = - \f{3}{2} \int_0^1 \f{ \big( \alpha f(\alpha) \big)^{1/2}}{ \big(F(\alpha) - F(\alpha w) \big)^{5/2}} \Big( \big( f(\alpha) + \alpha f'(\alpha) \big) \big(F(\alpha) - F(\alpha w) \big) 
   \\ - \alpha f(\alpha) \big( f(\alpha) - w f(\alpha w) \big) \Big) dw,
  \end{multline}
  and introduce
  \[
   z(\alpha, w) := \big( f(\alpha) + \alpha f'(\alpha) \big) \big(F(\alpha) - F(\alpha w) \big) - \alpha f(\alpha) \big( f(\alpha) - w f(\alpha w) \big).
  \]
  Now, we are going to prove that under conditions \eqref{bist:2}, \eqref{bist:3}, for all $\alpha \in (\theta_c, 1]$, $w \in [0, 1]$, 
  \[
   z(\alpha, w) \leq 0,
  \]
  with strict inequality almost everywhere. 
  First, we notice that $z(\alpha, 1) = 0$ and 
  \[
  z(\alpha, 0) = F(\alpha) \big( f(\alpha) + \alpha f'(\alpha) \big) - \alpha f(\alpha)^2.   
  \]

  Then we compute
  \begin{align*}
   \p_w z &= - \alpha f(\alpha w) \big( f(\alpha) + \alpha f'(\alpha) \big) + \alpha f(\alpha) f(\alpha w) + \alpha^2 w f(\alpha) f'(\alpha w) \\
   &= \alpha^2 w f(\alpha) f'(\alpha w) - \alpha^2 f(\alpha w) f'(\alpha).
  \end{align*}
  Now, denoting $g(x) = x f'(x) / f(x)$, we get
  \beq
   \p_w z = \alpha f(\alpha w) f(\alpha) \big( g(\alpha w) - g(\alpha) \big).
   \label{derivz}
  \eeq
  We are going to make use of the assumptions on $f$ and of equation \eqref{derivz} to prove that $z \leq 0$.

  Recall that there exists a unique $\alpha_1 \in (\theta, 1)$ such that $g(\alpha_1) = 1$.
  If $\alpha \leq \alpha_1$, then for all $w \in [0, \alpha/\theta)$, $g(\alpha w) \leq g(\alpha)$
  while for all $w \in (\alpha / \theta, 1]$, $g(\alpha w) \geq g(\alpha)$ (these facts are stated in Lemma \ref{lem:varg}).
  
  Hence $w \mapsto z (\alpha, w)$ is increasing on $[0, \alpha / \theta]$ and on $[\alpha / \theta, 1]$.
  Since $z(\alpha, 1) = 0$, it implies that $z \leq 0$.
  
  Now, if $\alpha > \alpha_1$, there exists a unique $\beta(\alpha) \in (0, \theta)$ such that
  $g(\beta(\alpha)) = g(\alpha)$.
  In this case, if $w \in [0, \alpha / \beta(\alpha)] \cup (\theta, 1]$, $g(\alpha w) \geq g(\alpha)$.
  If $w \in (\alpha / \beta(\alpha), \theta)$, then $g(\alpha w) < g(\alpha)$.
  Hence, $\p_w z \leq 0$ on $[0, \beta(\alpha)/\alpha]$ and $\p_w z \geq 0$ on 
  $[\beta(\alpha) / \alpha, 1]$.
  It implies that $h \leq 0$ if, and only if, $z(\alpha, 0) \leq 0$ for all $\alpha > \alpha_0$.
  This is assumption \eqref{bist:3}.
  
  All in all, we proved that $z \leq 0$ for all $\alpha, w$.  
  Hence $h'(\alpha) > 0$, and there exists a unique $\alpha_0 \in (\theta_c, 1)$ such that $h(\alpha_0) = -2$.
  
  We conclude that $L_{\alpha}$ is decreasing on $(\theta_c, \alpha_0)$ and increasing on $(\alpha_0, 1]$.
  Hence $\alpha_0$ is the unique minimum point of $L_{\alpha}$.  
  
  \end{proof}

\section*{Acknowledgements}
The authors acknowledge partial support from Capes/Cofecub project Ma-833 15 “Modeling  innovative  control  method  for  Dengue  fever” and from the Programme Convergence Sorbonne Universit\'{e}s / FAPERJ “Control  and  identification  for  mathematical  models  of  Dengue epidemics”.
MS and NV acknowledge partial funding from the ANR blanche project Kibord: ANR-13-BS01-0004 funded by the French Ministry of Research, from the Emergence project from Mairie de Paris, Analysis and simulation of optimal shapes - application to lifesciences and from Inria, France and CAPES, Brazil (processo 99999.007551/2015-00), in the framework of the STIC AmSud project MOSTICAW.
JPZ was supported by CNPq grants 302161/2003-1 and 474085/2003-1, by
FAPERJ through the programs {\em Cientistas do Nosso Estado}, and by
the Brazil-France cooperation agreement.

\bibliographystyle{alpha}
\bibliography{biblio}   

\newcommand{\etalchar}[1]{$^{#1}$}
\begin{thebibliography}{MdFSSCLdO10}

\bibitem[Alp14]{Alp.Genetic}
L.~Alphey.
\newblock {Genetic Control of Mosquitoes}.
\newblock {\em Annual Review of Entomology}, 59(1):205--224, 2014.

\bibitem[AMN{\etalchar{+}}13]{Alp.Aedes}
L.~Alphey, A.~McKemey, D.~Nimmo, O.~M. Neira, R.~Lacroix, K.~Matzen, and
  C.~Beech.
\newblock {Genetic control of \textit{Aedes} mosquitoes}.
\newblock {\em Pathogens and Global Health}, 107(4):170--179, 2013.

\bibitem[BAGDG{\etalchar{+}}13]{Bla.Wolbachia}
M.~S.~C. Blagrove, C.~Arias-Goeta, C.~Di~Genua, A.-B. Failloux, and S.~P.
  Sinkins.
\newblock {A \textit{Wolbachia} \textit{w}Mel Transinfection in \textit{Aedes
  albopictus} Is Not Detrimental to Host Fitness and Inhibits Chikungunya
  Virus}.
\newblock {\em PLoS Neglected Tropical Diseases}, 7(3):e2152, 2013.

\bibitem[BGB{\etalchar{+}}13]{Bha.Burden}
S.~Bhatt, P.~W. Gething, O.~J. Brady, J.~P. Messina, A.~W. Farlow, C.~L. Moyes,
  J.~M. Drake, J.~S. Brownstein, A.~G. Hoen, O.~Sankoh, M.~F. Myers, D.~B.
  George, T.~Jaenisch, G.~R.~W. Wint, C.~P. Simmons, T.~W. Scott, J.~J. Farrar,
  and S.~I. Hay.
\newblock {The global distribution and burden of dengue}.
\newblock {\em Nature}, 496(7446):504--507, 2013.

\bibitem[BH89]{BarHew}
N.H. Barton and G.M. Hewitt.
\newblock {Adaptation, speciation and hybrid zones.}
\newblock {\em Nature}, 341:497--503, 1989.

\bibitem[BR91]{BarRou}
N.H. Barton and S.~Rouhani.
\newblock {The probability of fixation of a new karyotype in a continuous
  population.}
\newblock {\em Evolution}, 45(3):499--517, 1991.

\bibitem[BT11]{BarTur.Spatial}
N.~H. Barton and M.~Turelli.
\newblock {Spatial waves of advance with bistable dynamics: cytoplasmic and
  genetic analogues of Allee effects.}
\newblock {\em The American Naturalist}, 178:E48--E75, 2011.

\bibitem[CDC16]{CDC}
{\url{http://www.cdc.gov/zika/transmission/index.html}}, 2016.

\bibitem[CK13]{chankim}
M.~H.~T. Chan and P.~S. Kim.
\newblock {Modeling a \textit{Wolbachia} Invasion Using a Slow–Fast Dispersal
  Reaction–Diffusion Approach.}
\newblock {\em Bull Math Biol}, 75:1501--1523, 2013.

\bibitem[CMS{\etalchar{+}}11]{Cra.Wolbachia}
P.~R. Crain, J.~W. Mains, E.~Suh, Y.~Huang, P.~H. Crowley, and S.~L. Dobson.
\newblock \textit{Wolbachia} infections that reduce immature insect survival:
  Predicted impacts on population replacement.
\newblock {\em BMC Evolutionary Biology}, 11(1):1--10, 2011.

\bibitem[DdSC{\etalchar{+}}15]{Dut.Lab}
G.~L.~C. Dutra, L.~M.~B. dos Santos, E.~P. Caragata, J.~B.~L. Silva, D.~A.~M.
  Villela, R.~Maciel-de Freitas, and L.~A. Moreira.
\newblock {From Lab to Field: the influence of urban landscapes on the invasive
  potential of \textit{Wolbachia} in Brazilian \textit{Aedes aegypti}
  mosquitoes}.
\newblock {\em PLoS Negl Trop Dis}, 9(4), 2015.

\bibitem[DM10]{DuMat.Convergence}
Y.~Du and H.~Matano.
\newblock {Convergence and sharp thresholds for propagation in nonlinear
  diffusion problems.}
\newblock {\em J. Eur. Math. Soc.}, 12:279--312, 2010.

\bibitem[ER61]{Erd.Classical}
P.~Erdős and A.~Rényi.
\newblock {On a classical problem of probability theory.}
\newblock {\em Magyar Tudományos Akadémia Matematikai Kutató Intézetének
  Közleményei}, 6:215--220, 1961.

\bibitem[FJBH11]{Fen.Solving}
A.~Fenton, K.~N. Johnson, J.~C. Brownlie, and G.~D.~D. Hurst.
\newblock {Solving the \textit{Wolbachia} paradox: modeling the tripartite
  interaction between host, \textit{Wolbachia}, and a natural enemy.}
\newblock {\em The American Naturalist}, 178:333--342, 2011.

\bibitem[HB13]{HugBri.Modelling}
H.~Hughes and N.~F. Britton.
\newblock {Modeling the Use of \textit{Wolbachia} to Control Dengue Fever
  Transmission.}
\newblock {\em Bull. Math. Biol.}, 75:796--818, 2013.

\bibitem[HG12]{Han.Modelling}
P.~A. Hancock and H.~C.~J. Godfray.
\newblock Modelling the spread of \textit{Wolbachia} in spatially heterogeneous
  environments.
\newblock {\em Journal of The Royal Society Interface}, 2012.

\bibitem[HIOC{\etalchar{+}}14]{Hof.stability}
A.~A. Hoffmann, I.~Iturbe-Ormaetxe, A.~G. Callahan, B.~L. Phillips,
  K.~Billington, J.~K. Axford, B.~Montgomery, A.~P. Turley, and S.~L. O'Neill.
\newblock {Stability of the \textit{w}Mel \textit{Wolbachia} Infection
  following Invasion into \textit{Aedes aegypti} Populations}.
\newblock {\em PLoS Negl Trop Dis}, 8(9):1--9, 2014.

\bibitem[HMP{\etalchar{+}}11]{Hof.Successful}
A.~A. Hoffmann, B.~L. Montgomery, J.~Popovici, I.~Iturbe-Ormaetxe, P.~H.
  Johnson, F.~Muzzi, M.~Greenfield, M.~Durkan, Y.~S. Leong, Y.~Dong, H.~Cook,
  J.~Axford, A.~G. Callahan, N.~Kenny, C.~Omodei, E.~A. McGraw, P.~A. Ryan,
  S.~A. Ritchie, M.~Turelli, and S.~L. O'Neill.
\newblock {Successful establishment of \textit{Wolbachia} in \textit{Aedes}
  populations to suppress dengue transmission}.
\newblock {\em Nature}, 476(7361):454--457, 2011.

\bibitem[HSG11a]{Han.Population}
P.~A. Hancock, S.~P. Sinkins, and H.~C.~J. Godfray.
\newblock Population dynamic models of the spread of \textit{Wolbachia}.
\newblock {\em The American Naturalist}, 177(3):323--333, 2011.

\bibitem[HSG11b]{Han.Strategies}
P.~A. Hancock, S.~P. Sinkins, and H.~C.~J. Godfray.
\newblock Strategies for introducing \textit{Wolbachia} to reduce transmission
  of mosquito-borne diseases.
\newblock {\em PLoS Negl Trop Dis}, 5(4):1--10, 2011.

\bibitem[Joh15]{Joh.Impact}
K.~N. Johnson.
\newblock {The Impact of \textit{Wolbachia} on Virus Infection in Mosquitoes}.
\newblock {\em Viruses}, 7:5705--5717, 2015.

\bibitem[JTG08]{Jan.Stochastic}
V.~A.A Jansen, M.~Turelli, and H.~C.~J. Godfray.
\newblock Stochastic spread of \textit{Wolbachia}.
\newblock {\em Proceedings of the Royal Society of London B: Biological
  Sciences}, 275(1652):2769--2776, 2008.

\bibitem[MdFSSCLdO10]{Mac.Influence}
R.~Maciel-de Freitas, R.~Souza-Santos, C.~T. Codeço, and R.~Lourenço-de
  Oliveira.
\newblock {Influence of the spatial distribution of human hosts and large size
  containers on the dispersal of the mosquito \textit{Aedes aegypti} within the
  first gonotrophic cycle}.
\newblock {\em Medical and Veterinary Entomology}, 24:74--82, 2010.

\bibitem[MP16]{MatPol.Dynamics}
H~Matano and P~Poláčik.
\newblock Dynamics of nonnegative solutions of one-dimensional
  reaction–diffusion equations with localized initial data. part i: A general
  quasiconvergence theorem and its consequences.
\newblock {\em Communications in Partial Differential Equations},
  41(5):785--811, 2016.

\bibitem[MZ17]{MurZho.Threshold}
C.~B. Muratov and X.~Zhong.
\newblock Threshold phenomena for symmetric-decreasing radial solutions of
  reaction-diffusion equations.
\newblock {\em Discrete and Continuous Dynamical Systems}, 37(2):915--944,
  2017.

\bibitem[NNN{\etalchar{+}}15]{Ngu.Field}
T.~H. Nguyen, H.~L. Nguyen, T.~Y. Nguyen, S.~N. Vu, N.~D. Tran, T.~N. Le, Q.~M.
  Vien, T.~C. Bui, H.~T. Le, S.~Kutcher, T.~P Hurst, T.~T.~H. Duong, J.~A.~L.
  Jeffery, J.~M. Darbro, B.~H. Kay, I.~Iturbe-Ormaetxe, J.~Popovici, B.~L.
  Montgomery, A.~P. Turley, F.~Zigterman, H.~Cook, P.~E. Cook, P.~H. Johnson,
  P.~A. Ryan, C.~J. Paton, S.~A. Ritchie, C.~P. Simmons, S.~L. O'Neill, and
  A.~A. Hoffmann.
\newblock {Field evaluation of the establishment potential of \textit{w}MelPop
  \textit{Wolbachia} in Australia and Vietnam for dengue control}.
\newblock {\em Parasites \& Vectors}, 8:563, 2015.

\bibitem[OS98]{OuyShi.ExactI}
T.~Ouyang and J.~Shi.
\newblock Exact multiplicity of positive solutions for a class of semilinear
  problems.
\newblock {\em Journal of Differential Equations}, 146(1):121 -- 156, 1998.

\bibitem[OS99]{OuyShi.ExactII}
T.~Ouyang and J.~Shi.
\newblock Exact multiplicity of positive solutions for a class of semilinear
  problem, ii.
\newblock {\em Journal of Differential Equations}, 158(1):94 -- 151, 1999.

\bibitem[OSS08]{Otero}
M.~Otero, N.~Schweigmann, and H.~G. Solari.
\newblock {A stochastic spatial dynamicl model for \textit{Aedes aegypti}}.
\newblock {\em Bulletin of Mathematical Biology}, 70:1297--325, 2008.

\bibitem[Pol11]{Pol.Threshold}
P.~Polacik.
\newblock {Threshold solutions and sharp transitions for nonautonomous
  parabolic equations on $\R^N$.}
\newblock {\em Archive for Rational Mechanics and Analysis}, 199(1):69--97,
  2011.

\bibitem[RB87]{RouBar}
S.~Rouhani and N.H. Barton.
\newblock {Speciation and the "Shifting Balance" in a continuous population.}
\newblock {\em Theoretical Population Biology}, 31:465--492, 1987.

\bibitem[SV16]{sv2016}
M.~Strugarek and N.~Vauchelet.
\newblock {Reduction to a single closed equation for 2 by 2 reaction-diffusion
  systems of Lotka-Volterra type.}
\newblock {\em SIAM J. Appl. Math.}, 76(5):2060--2080, 2016.

\bibitem[Tur10]{Tur.Cytoplasmic}
M.~Turelli.
\newblock Cytoplasmic incompatibility in populations with overlapping
  generations.
\newblock {\em Evolution}, 64(1):232--241, 2010.

\bibitem[VC12]{Vav.Making}
F.~Vavre and S.~Charlat.
\newblock Making (good) use of \textit{Wolbachia}: what the models say.
\newblock {\em Current Opinion in Microbiology}, 15(3):263 -- 268, 2012.

\bibitem[VCF{\etalchar{+}}15]{Vil.Bayesian}
D.~A.~M. Villela, C.~T. Codeço, F.~Figueiredo, G.~A. Garcia, R.~Maciel-de
  Freitas, and C.~J. Struchiner.
\newblock {A Bayesian Hierarchical Model for Estimation of Abundance and
  Spatial Density of \textit{Aedes aegypti}.}
\newblock {\em PLoS ONE}, 10(4), 2015.

\bibitem[WJM{\etalchar{+}}11]{Wal.wMel}
T.~Walker, P.~H. Johnson, L.~A. Moreira, I.~Iturbe-Ormaetxe, F.~D. Frentiu,
  C.~J. McMeniman, Y.~S. Leong, Y.~Dong, J.~Axford, P.~Kriesner, A.~L. Lloyd,
  S.~A. Ritchie, S.~L. O'Neill, and A.~A. Hoffmann.
\newblock {The \textit{w}Mel \textit{Wolbachia} strain blocks dengue and
  invades caged \textit{Aedes aegypti} populations}.
\newblock {\em Nature}, 476(7361):450--453, 2011.

\bibitem[YMW{\etalchar{+}}11]{Yea.Dynamics}
H.~L. Yeap, P.~Mee, T.~Walker, A.~R. Weeks, S.~L. O'Neill, P.~Johnson, S.~A.
  Ritchie, K.~M. Richardson, C.~Doig, N.~M. Endersby, and A.~A. Hoffmann.
\newblock {Dynamics of the {\textquotedblleft}Popcorn{\textquotedblright}
  \textit{Wolbachia} Infection in Outbred \textit{Aedes aegypti} Informs
  Prospects for Mosquito Vector Control}.
\newblock {\em Genetics}, 187(2):583--595, 2011.

\bibitem[YREH{\etalchar{+}}16]{Yea.Mitochondrial}
H.~L. Yeap, G.~Rasic, N.~M. Endersby-Harshman, S.~F. Lee, E.~Arguni,
  H.~Le~Nguyen, and A.~A. Hoffmann.
\newblock {Mitochondrial DNA variants help monitor the dynamics of
  \textit{Wolbachia} invasion into host populations}.
\newblock {\em Heredity}, 116(3):265--276, 2016.

\bibitem[Zla06]{Zla.Sharp}
A.~Zlatos.
\newblock {Sharp transition between extinction and propagation of reaction.}
\newblock {\em J. Amer. Math. Soc.}, 19:251--263, 2006.

\end{thebibliography}

%
%

\end{document}